\title{Valuation rings are derived splinters}
\author{Benjamin Antieau and Rankeya Datta}
\date{}
\definecolor{todo}{rgb}{1,0,0}
\definecolor{conditional}{rgb}{0,1,0}
\definecolor{e-mail}{rgb}{0,.40,.80}
\definecolor{reference}{rgb}{.20,.60,.22}
\definecolor{mrnumber}{rgb}{.80,.40,0}
\definecolor{citation}{rgb}{0,.40,.80}
\renewcommand{\rm}{\mdseries}
\let\oldmarginpar\marginpar
\renewcommand\marginpar[1]{\-\oldmarginpar[\raggedleft\footnotesize #1]%
{\raggedright\footnotesize #1}}
\newcommand{\Dscr}{\mathcal{D}}
\newcommand{\Fscr}{\mathcal{F}}
\newcommand{\Oscr}{\mathcal{O}}
\newcommand{\Tscr}{\mathcal{T}}
\newcommand{\FF}{\mathds{F}}
\newcommand{\QQ}{\mathds{Q}}
\newcommand{\RR}{\mathds{R}}
\newcommand{\ZZ}{\mathds{Z}}
\newcommand{\qc}{\mathrm{qc}}
\newcommand{\coh}{\mathrm{coh}}
\DeclareMathOperator*{\colim}{colim}
\newcommand{\iso}{\cong}
\newcommand{\perf}{\mathrm{perf}}
\newcommand{\Hom}{\mathrm{Hom}}
\DeclareMathOperator{\Spec}{Spec}
\DeclareMathOperator{\Frac}{Frac}
\DeclareMathOperator{\Ext}{Ext}
\DeclareMathOperator{\coker}{coker}
\DeclareMathOperator{\id}{id}
\DeclareMathOperator{\QCoh}{QCoh}
\DeclareMathOperator{\Mod}{Mod}
\theoremstyle{plain}
\newtheorem{theorem}[subsubsection]{Theorem}
\newtheorem{proposition}[subsubsection]{Proposition}
\newtheorem{lemma}[subsubsection]{Lemma}
\newtheorem{corollary}[subsubsection]{Corollary}
\theoremstyle{definition}
\newtheorem{definition}[subsubsection]{Definition}
\newtheorem{remark}[subsubsection]{Remark}
\theoremstyle{definition}
\theoremstyle{plain}
\newenvironment{customthm}[1]
  {\innercustomthm}
  {\endinnercustomthm}
\theoremstyle{plain}  
\newenvironment{customcor}[1]
  {\innercustomcor}
  {\endinnercustomcor}
\theoremstyle{plain}  
\newenvironment{customprop}[1]
  {\innercustomprop}
  {\endinnercustomprop}
\numberwithin{equation}{subsubsection}
\numberwithin{equation}{subsubsection}
\newcommand{\cF}{\mathcal{F}}
\newcommand{\cI}{\mathcal{I}}
\newcommand{\bL}{\mathbf{L}}
\newcommand{\cO}{\mathcal{O}}
\newcommand{\bR}{\mathbf{R}}
\newcommand{\bV}{\mathbf{V}}
\newcommand{\fp}{\mathfrak{p}}
\def\@tocline#1#2#3#4#5#6#7{\relax
  \ifnum #1>\c@tocdepth 
  \else
    \par \addpenalty\@secpenalty\addvspace{#2}%
    \begingroup \hyphenpenalty\@M
    \@ifempty{#4}{%
      \@tempdima\csname r@tocindent\number#1\endcsname\relax
    }{%
      \@tempdima#4\relax
    }%
    \parindent\z@ \leftskip#3\relax \advance\leftskip\@tempdima\relax
    \rightskip\@pnumwidth plus4em \parfillskip-\@pnumwidth
    #5\leavevmode\hskip-\@tempdima
      \ifcase #1
       \or\or \hskip 1em \or \hskip 2em \else \hskip 3em \fi%
      #6\nobreak\relax
    \dotfill\hbox to\@pnumwidth{\@tocpagenum{#7}}\par
    \nobreak
    \endgroup
  \fi}
\begin{document}

\maketitle

\begin{abstract}
    \noindent
    We give three proofs that valuation rings are derived splinters: a
    geometric proof using absolute integral closure, a homological proof which
    reduces the problem to checking that valuation rings are splinters (which
    is done in the second author's PhD thesis and which we reprise here), and a
    proof by approximation which reduces the problem to Bhatt's proof of the
    derived direct summand conjecture. The approximation property also shows
    that smooth algebras over valuation rings are splinters.
\end{abstract}

\section{Introduction}

Recall that a subring $V$ of a field $K$ is called a \emph{valuation ring of $K$}
if for all $x \in K$, either $x$ or $x^{-1}$ is an element of $V$. Valuation
rings, while typically non-Noetherian, satisfy many desirable properties of
Noetherian regular rings. For example, the (absolute) Frobenius endomorphism
of a valuation ring of prime characteristic is flat \cite{DS16}, while a 
well-known result of Kunz says that a Noetherian ring of prime characteristic
has flat Frobenius precisely when the ring is regular \cite{Kun69}. More recently, Kelly 
and Morrow have shown that the algebraic $K$-theory of valuation rings exhibits
behavior similar to the algebraic $K$-theory of regular rings \cite{KM18}. The main 
goal of this
paper is to show that valuation rings satisfy another recently established property of regular
rings, namely that of being a derived splinter (see Definition \ref{def:d-splinter}). 
More specifically, we show the following.

\begin{customthm}{\ref{thm:valuations-derived-splinters}}
Let $V$ be a valuation ring of arbitrary characteristic. Then for any
proper surjective morphism $f: X \rightarrow \Spec(V)$, the induced
map $\cO_{\Spec(V)} \rightarrow \bR f_*\cO_X$ admits a section
in $\Dscr_\qc(\Spec(V))$.
\end{customthm}

Bhatt first defined derived splinters in his thesis \cite{Bha10} (see also \cite{Bha12})
and showed later using techniques from perfectoid geometry that regular rings
are derived splinters \cite{Bha18}, thereby proving a derived analogue of 
Hochster's famous direct summand conjecture (now a theorem due to Hochster
 \cite{Hoc73} and Andr\'e~\cite{And18}). 
However, the notion of a derived splinter
was studied much earlier by Kov\'acs who 
showed that a variety over a field of characteristic $0$ is a 
derived splinter precisely when it has rational singularities \cite{Kov00}. 
Derived splinters are also related to the well-studied commutative algebra
notion of splinters (see Definition \ref{def:splinters}). For instance, 
the notions of splinters and derived
splinters coincide for prime characteristic Noetherian schemes \cite{Bha12}, 
and forthcoming work of Bhatt suggests the 
same is true for excellent schemes in mixed characteristic. In fact,
Theorem \ref{thm:valuations-derived-splinters} 
suggested itself when
the second author gave a characteristic independent
proof of the splinter property for valuation rings \cite{Dat17(a)},
reproduced here as Proposition \ref{prop:valuations-splinters}.

We provide
three proofs of Theorem \ref{thm:valuations-derived-splinters}: a geometric proof 
using the absolute integral closure, a homological proof which
reduces the problem to valuation rings being splinters, and a
proof by regular approximation which reduces the problem to Bhatt's proof of the
derived direct summand conjecture. All three proofs have a unifying
feature in that they exploit striking homological
properties of valuation rings.
The principal among these is that finitely generated torsion free modules over a 
valuation ring
are free (see Lemma \ref{lem:fin-gen-torsion free}). 
This simple property has the following pleasing global consequence, 
reminiscent of the behavior of proper maps over regular local rings.

\begin{customcor}{\ref{cor:valuation-rings-universally-cohesive}}
Given a valuation ring $V$ and a finitely presented proper morphism 
$f: X \rightarrow \Spec(V)$,  if $F \in \Dscr^b_\coh(X)$, then
    $\bR f_*F$ is a perfect complex on $\Spec(V)$.
\end{customcor}

\noindent Using the observation that finitely presented modules over a valuation ring
have projective dimension at most $1$ leads to a decomposition of
$\bR f_*F$ as a direct sum of its cohomology modules (Corollary \ref{cor:fp-modules-pd-1}),
allowing us to reduce one proof of Theorem \ref{thm:valuations-derived-splinters}
to the splinter property.

Finiteness properties of the right derived pushforward are also at the heart
of our proof that the derived splinter condition satisfies faithfully flat 
descent under some mild restrictions.

\begin{customprop}{\ref{prop:descent-derived-splinter}} 
If $A \rightarrow B$ is a faithfully flat map such that 
$B$ is a derived splinter, then $A$ is a derived splinter
provided it is universally cohesive.
\end{customprop}

\noindent Universal cohesiveness is a stable coherence property 
(Definition \ref{def:universally-cohesive}) that
ensures, by work of Fujiwara and Kato, that higher direct images of
finitely presented proper morphisms behave as
in the Noetherian world 
(Theorem \ref{thm:Fujiwara-Kato-finiteness}). 
A result of Nagata 
(Theorem \ref{thm:Nagata-amazing}), generalized substantially in 
\cite{raynaud-gruson}, implies that valuation rings 
are universally cohesive. Using faithfully flat descent
one then reduces the other proofs 
of the derived splinter property for valuation rings to the case where 
the ring is absolutely integrally closed, a setting 
where a wealth of geometric techniques become available.
For example, the valuative criterion 
can be used to show that absolutely integrally closed valuation rings satisfy 
the splitting condition for a more general class of morphisms than 
just proper surjective ones (Lemma \ref{lem:lifting-identity}). 

The most interesting aspect of absolutely integrally closed valuation rings
in this paper is their relation with local uniformization. Classical local uniformization
due to Zariski \cite{Zar40} implies that any valuation ring of characteristic $0$
can be expressed as a directed colimit of smooth $\QQ$-subalgebras, that is,
such a ring is \emph{ind-regular}. Using Temkin's
inseparable local uniformization \cite{temkin-inseparable}
and his substantial refinement of de Jong's alterations \cite{temkin-tame}, 
we show that there are important classes of valuation rings in prime 
and mixed characteristic that are ind-regular 
(Propositions \ref{prop:indsmooth} and \ref{prop:indregular}). For example, an
unpublished observation of Elmanto and Hoyois implies that
\emph{all} absolutely integrally closed valuation rings are ind-regular
(Corollary \ref{cor:absoluteintegralclosed}) and are even directed colimits of their
regular Noetherian subalgebras. This property is not shared by
other `nice' absolutely integrally closed domains (Remark \ref{rem:not-indsmooth}).
Since regular rings are derived direct summands \cite{Bha18}, 
descent properties of morphisms over directed limits of
schemes then gives another proof of the fact that absolutely integrally closed 
valuation rings are derived splinters.

Examining the behavior of the derived splinter property under localization 
(Lemma \ref{lem:d-splinter-local})
also reveals that Pr\"ufer domains, the global analogues of
valuation rings, are derived splinters (Corollary \ref{cor:Prufer}). 
In particular, the absolute 
integral closure of a valuation ring is a derived splinter, 
a fact, that as far as we are aware, is not known for absolute integral
closures of regular rings. Furthermore, upon a careful consideration 
of what the correct
notion of splinter should be in a non-Noetherian context (Lemma \ref{lem:splinter-new-def}
and Definition \ref{def:splinters}), we show
using the regular approximation property that smooth algebras 
over valuation rings are splinters (Theorem \ref{thm:smooth-valuations}). 
However,
we do not know whether smooth algebras over valuation rings are derived
splinters.

\subsection*{Acknowledgements} We thank Bhargav Bhatt, Linquan Ma, Akhil
Mathew, Matthew Morrow, Takumi Murayama, 
Emanuel Reinecke and Kevin Tucker for helpful conversations. 
The first author is indebted to Elden Elmanto who pointed out~\cite{temkin-tame}
and suggested that Proposition~4.2.1 should be true.
The second author is especially grateful to Emanuel Reinecke 
for numerous illuminating discussions about derived splinters and for making us
aware of Fujiwara and Kato's finiteness result (Theorem \ref{thm:Fujiwara-Kato-finiteness}).
Additionally, we thank Takumi for comments on a draft.
The first author was supported by NSF Grant DMS-1552766.

\section{Preliminaries}

Let $X$ be a scheme
We will use the derived category
$\Dscr_\qc(X)\subseteq\Dscr(\mathrm{Mod}_{\Oscr_X})$ consisting of the full
subcategory on those chain complexes of
$\Oscr_X$-modules with quasicoherent cohomology sheaves.
If $\bullet=+,-,b$, then $\Dscr^\bullet_\qc(X)$ is the full triangulated subcategory of $\Dscr_\qc(X)$ of
complexes with bounded below cohomology sheaves, bounded above cohomology sheaves, or
bounded cohomology sheaves, respectively. 

For a ring $R$, we say a map of $R$-modules $M \rightarrow N$ is \emph{pure} 
if for any $R$-module $P$, the induced map $M \otimes_R P \rightarrow 
N \otimes_R P$ is injective. Pure maps are also known as \emph{universally
injective} maps in the literature; see \cite[\href{https://stacks.math.columbia.edu/tag/058I}{Tag 058I}]{stacks-project}. For example, any faithfully flat ring map $R \rightarrow S$ is pure
 and a ring map $R \rightarrow S$ that splits in $\Mod_R$
is also pure. A result of Mesablishvili \cite{Mes00} shows that a 
ring map $\varphi: R \rightarrow S$
is pure if and only if $\varphi$ is an effective descent morphism for modules
(see also \cite[\href{https://stacks.math.columbia.edu/tag/08WE}{Tag 08WE}]{stacks-project}
for some history pertaining to Mesablishvili's result).

\subsection{Derived pushforward and flat pullback}
As is customary, we will use the abbreviation qcqs for a morphism of schemes $f: X \rightarrow Y$ that is quasi-compact and quasi-separated.

\begin{theorem}
\label{thm:derived-pushforward-bounded-cpx}
Let $f: X \rightarrow Y$ be a qcqs morphism of schemes, and suppose $Y$ is quasi-compact. Then for any $F \in \Dscr^-_{\qc}(X)$ (resp. $F \in \Dscr^b_{\qc}(X)$), $\bR f_*F \in \Dscr^-_{\qc}(Y)$ (resp. $\bR f_*F \in \Dscr^b_{\qc}(X)$). In particular, $\bR f_*\mathcal{O}_X \in \Dscr^b_{\qc}(Y)$.
\end{theorem}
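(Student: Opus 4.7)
The plan is to reduce to the case where $Y$ is affine, and then establish a finite cohomological amplitude bound for $\bR f_*$ on quasicoherent sheaves via induction on the size of an affine cover of $X$.

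First, because $Y$ is quasi-compact, it is covered by finitely many affine opens, and both containment in $\Dscr^-_\qc(Y)$ and in $\Dscr^b_\qc(Y)$ can be checked after restriction to each open in an affine cover. Restriction to any quasi-compact open $V \subseteq Y$ commutes with $\bR f_*$ (applied to the base-changed map $f^{-1}(V) \to V$, which is again qcqs since qcqs morphisms are stable under base change), so it suffices to prove the statement when $Y = \Spec A$ is affine.

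Next, I would prove by induction on the least integer $n$ such that $X$ admits an affine open cover of cardinality $n$ that $\bR f_*$ has cohomological amplitude at most $n-1$ on $\Dscr_\qc(X)$, and moreover sends $\Dscr_\qc(X)$ into $\Dscr_\qc(Y)$. The base case $n=1$ is trivial: $X$ is affine, so $f$ is an affine morphism, $\bR f_*$ agrees with $f_*$ on quasi-coherent sheaves and is exact, giving amplitude $0$. For the inductive step, write $X = U \cup W$ with $U$ affine and $W$ a union of $n-1$ affines. Quasi-separatedness of $X$ implies $U \cap W$ is quasi-compact, and as an open subscheme of the affine $U$ it is separated; it is covered by at most $n-1$ distinguished affine opens of $U$. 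The Mayer--Vietoris distinguished triangle
\[
\bR f_* F \longrightarrow \bR (f|_U)_* (F|_U) \oplus \bR (f|_W)_* (F|_W) \longrightarrow \bR (f|_{U\cap W})_* (F|_{U\cap W}) \xrightarrow{+1}
\]
combined with the inductive hypothesis applied to each of the terms on the right yields both the amplitude bound of $n-1$ and the preservation of $\Dscr_\qc$.

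Given a uniform amplitude bound $N$ together with $\bR f_*$ preserving $\Dscr_\qc$, the conclusion follows by standard truncation arguments: for $F \in \Dscr^-_\qc(X)$ with $\H^i(F)=0$ for $i>m$, one checks $\H^i(\bR f_* F)=0$ for $i>m+N$ by applying $\bR f_*$ to the truncation triangles $\tau^{\leq k}F \to F \to \tau^{>k}F$ and passing to the limit; the bounded case is analogous. The ``in particular'' statement is then immediate by taking $F = \Oscr_X \in \Dscr^b_\qc(X)$. The principal obstacle is the Mayer--Vietoris bookkeeping needed to propagate both quasi-coherence and the amplitude bound through the induction, with the essential technical input being that qcqs morphisms over an affine base have bounded cohomological dimension on quasi-coherent sheaves.
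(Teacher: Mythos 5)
Your strategy is a direct Mayer--Vietoris induction where the paper simply cites Lipman \cite[Prop.~3.9.2]{Lip09} and notes that the key point is the existence of a uniform bound $d$ with $\mathrm{R}^i f_*\cF = 0$ for $i > d$ and all quasi-coherent $\cF$. Making that key point explicit is a reasonable and self-contained alternative, and the reduction to $Y$ affine and the final truncation argument are both fine. However, there is a genuine gap in the inductive step.

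You write $X = U \cup W$ with $U$ affine and $W$ covered by $n-1$ affines, and then assert that $U \cap W$ ``is covered by at most $n-1$ distinguished affine opens of $U$.'' This is not true when $X$ is merely quasi-separated. Writing $W = U_2 \cup \dots \cup U_n$, one has $U \cap W = \bigcup_{j\ge 2} (U \cap U_j)$, and each $U \cap U_j$ is quasi-compact (by quasi-separatedness), but it need \emph{not} be affine, let alone a distinguished open of $U$; that conclusion would require $X$ to be separated. Each $U \cap U_j$ is therefore only a finite union of distinguished opens of $U$, and the total number of affines needed to cover $U \cap W$ is not bounded by $n-1$. Consequently you cannot apply your inductive hypothesis to $U \cap W$, and the claimed amplitude bound of $n-1$ does not follow. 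The induction as structured breaks down precisely at the quasi-separated-but-not-separated case.

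The standard repair is to split off the separated case first: since $U \cap W$ is an open subscheme of the affine $U$, it is separated and quasi-compact, hence covered by some finite number $m$ of affines whose pairwise (and higher) intersections are again affine; the alternating \v{C}ech complex then bounds its quasi-coherent cohomological dimension by $m-1$, with no appeal to the induction on $n$. Feeding this (together with the inductive bound for $W$ and exactness for the affine $U$) into the Mayer--Vietoris triangle gives a finite, though not explicitly $n-1$, amplitude bound for $X$, which is all the theorem requires. With that modification your argument goes through; as currently written, the step asserting the $n-1$ distinguished-open cover of $U \cap W$ is incorrect.
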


\begin{proof}
For a textbook reference of this result, see \cite[Prop. 3.9.2]{Lip09}. The main point is that if $Y$ is a quasi-compact scheme, then there exists an integer $d$ such that for any quasi-coherent sheaf $\cF$ of $\cO_X$-modules, $\text{R}^if_*\cF = 0$ for $i > d$. 
\end{proof}

We next recall the comparison isomorphism associated with Tor-independent cartesian squares.

\begin{theorem}
\label{thm:derived-pushforward-fpqc-pullback}
Consider a cartesian square
    $$\xymatrix{
        X'\ar[r]^{g'}\ar[d]^{f'}&X\ar[d]^f\\
        Y'\ar[r]^g&Y
    }$$ of quasi-separated schemes where all morphisms are qcqs.
    \begin{enumerate}
        \item[{\em (a)}] There is an induced natural transformation $$\bL g^*\bR
            f_*\rightarrow\bR f'_*\bL{g^{'}}^*$$ of functors
            $\Dscr_{\qc}(X)\rightarrow\Dscr_{\qc}(Y')$.
        \item[{\em (b)}] If $Y'$ and $X$ are Tor-independent (for example, if $f$ or $g$ is 
            flat), then the above natural transformation is a natural isomorphism. In particular, 
            $\bL g^* \bR f_*\cO_X \simeq \bR(f')_*\cO_{X'}$.
    \end{enumerate}
\end{theorem}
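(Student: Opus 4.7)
The plan is to obtain part (a) from formal nonsense about adjunctions, and then to reduce part (b) to a statement about tensor products of rings via a Čech-complex computation.

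For part (a), I would construct the base change transformation as follows. The commutativity $fg' = gf'$ gives the equality $\bR f_* \bR g'_* = \bR g_* \bR f'_*$ of right-derived functors. Starting from the unit $\eta \colon \id \Rightarrow \bR g'_* \bL g'^{*}$, apply $\bR f_*$ to obtain $\bR f_* \Rightarrow \bR g_* \bR f'_* \bL g'^{*}$; finally, the $(\bL g^*, \bR g_*)$ adjunction turns this into the desired natural transformation $\bL g^* \bR f_* \Rightarrow \bR f'_* \bL g'^{*}$. This construction makes sense on all of $\Dscr_{\qc}(X)$ and requires no hypothesis beyond the cartesianness of the square.

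For part (b), the assertion that this transformation is an isomorphism can be checked Zariski-locally on $Y'$ and $Y$, so I reduce to the case where $Y = \Spec(A)$ and $Y' = \Spec(A')$ are affine, with $g$ corresponding to a ring map $A \to A'$. Since $f$ is qcqs, $X$ admits a finite cover by affines $\{U_i = \Spec(B_i)\}$ with quasi-compact pairwise intersections, which in turn admit finite affine covers. For $F \in \Dscr_{\qc}(X)$ the alternating Čech complex associated to this cover computes $\bR f_* F$ as an object of $\Dscr(A)$, and pulling back along $g$ computes $\bR f'_* \bL g'^{*} F$ by the analogous Čech complex for the base-changed cover of $X'$. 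Thus the problem reduces to checking, for each term of the Čech complex, that $(F|_{U_{i_0} \cap \cdots \cap U_{i_p}}) \otimes_A^{\bL} A' \simeq F|_{U_{i_0} \cap \cdots \cap U_{i_p}} \otimes_{B_{i_0 \cdots i_p}}^{\bL} (B_{i_0 \cdots i_p} \otimes_A^{\bL} A')$. Tor-independence of $Y'$ and $X$ ensures precisely that $B_{i_0\cdots i_p} \otimes_A^{\bL} A' \simeq B_{i_0\cdots i_p} \otimes_A A'$, after which the isomorphism is essentially transitivity of derived tensor products.

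The main obstacle I anticipate is bookkeeping rather than mathematical: one must pass from the affine-level isomorphism back to a statement about quasi-coherent complexes, which requires checking that formation of $\bL g^*$ of the Čech complex actually agrees with the Čech complex of $\bL g'^{*}F$. Since Theorem \ref{thm:derived-pushforward-bounded-cpx} guarantees a uniform upper bound on the cohomological amplitude of $\bR f_*$ (as $Y$ is quasi-compact), one does not need to worry about convergence issues for unbounded complexes: a Postnikov truncation argument reduces the claim to the case of a single quasi-coherent sheaf, where the Čech identification is classical. The last sentence of (b), that $\bL g^* \bR f_* \cO_X \simeq \bR f'_* \cO_{X'}$, then follows immediately because $\bL g'^{*} \cO_X = \cO_{X'}$.
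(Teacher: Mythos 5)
The paper does not actually prove this statement: it is a direct citation to Lipman \cite[Thm.\ 3.10.3]{Lip09} (together with a clarification of Tor-independence and a one-line remark for the ``in particular'' clause). So you are supplying an argument where the authors supply a reference. Your route --- constructing the base-change map in (a) by adjunction and proving (b) by reduction to affines and a \v{C}ech computation, with Tor-independence used to replace $B_{i_0\cdots i_p}\otimes^{\bL}_A A'$ by the ordinary tensor product --- is the standard proof of such results and is essentially how Lipman and the Stacks Project argue. Part (a) and the term-by-term \v{C}ech identification in (b) are correct as sketched.

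There is, however, a real gap at the unboundedness step. You propose a ``Postnikov truncation'' reduction to a single quasi-coherent sheaf, appealing to the bounded amplitude of $\bR f_*$ from Theorem~\ref{thm:derived-pushforward-bounded-cpx}. That alone does not suffice: $\bL g^*$ can have unbounded left amplitude (the Tor-dimension of $A\to A'$ need not be finite), so truncating $F$ above does not control a fixed cohomology degree of $\bL g^*\bR f_*F$. The correct reduction goes the other way: write $F$ as the homotopy colimit of its left truncations $\tau^{\geq -n}F$, and use that $\bR f_*$ (for qcqs $f$), $\bL g^*$, $\bL g'^*$ and $\bR f'_*$ all commute with homotopy colimits; this lands you in $\Dscr^+_\qc(X)$, where the \v{C}ech complex computes $\bR f_*$ directly --- no further reduction to a single sheaf is needed (nor is it cleanly available). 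Separately, you flag but do not actually carry out the extra layer needed when $f$ is only quasi-separated, so that the intersections $U_{i_0\cdots i_p}$ are merely quasi-compact: one has to pass to a hyper-\v{C}ech complex or induct on the size of the cover via Mayer--Vietoris. These gaps do not affect the paper's applications (all of which are to bounded complexes), but they do mean the proof of the statement as stated, for general $F\in\Dscr_\qc(X)$, is incomplete.
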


\newcommand{\Torscr}{\Tscr\mathrm{or}}

\begin{proof}
For a proof see \cite[Thm. 3.10.3]{Lip09}. Here Tor-independence means that for all $x \in X$ and $y' \in Y$ such that $f(x) =  g(y') \coloneqq y$,
one has
$$\Torscr^{\cO_{Y,y}}_i(\cO_{Y',y'}, \cO_{X,x}) = 0,$$
for all $i > 0$. Thus, Tor-independence is automatic if $f$ or $g$ is flat. The
    second assertion of (b) follows from the Tor-independent isomorphism because
    $\bL {g'}^*\cO_X \simeq {g'}^*\cO_X = \cO_{X'}$.
\end{proof}

\subsection{Coherent sheaves and cohesive schemes} Recall that given a scheme
$X$, an $\cO_X$-module $\cF$ is \emph{coherent} if
\begin{enumerate}
    \item[(a)] $\cF$ is of finite type and
    \item[(b)] for any open subset $U \subseteq X$ and any morphism of $\cO_U$-modules $\eta: \cO^{\oplus n}_{U} \rightarrow \cF|_U$, the kernel $\ker(\eta)$ is a finite type $\cO_U$-module.
\end{enumerate}

Note that coherent $\cO_X$-modules are quasi-coherent. Schemes $X$ for which
the structure sheaf $\cO_X$ is coherent as a module over of itself will be of
special interest in this paper, and will be given a special name following
Fujiwara and Kato \cite[Chap. 0, Def. 5.1.1]{FK18}.

\begin{definition}[\cite{FK18}]\label{def:cohesive-scheme}
A scheme $X$ is \emph{cohesive} if $\cO_X$ is coherent as a module over itself.
\end{definition}

\begin{remark}
An affine scheme $X = \Spec(A)$ is a cohesive scheme if and only if $A$ is a coherent ring, that is, if and only if every finitely generated ideal of $A$ is finitely presented as an $A$-module. Since finitely generated ideals of a valuation ring are principal (hence free of rank $1$), it follows that the spectrum of a valuation ring is a cohesive affine scheme. Spectra of valuation rings will be the principal example of cohesive schemes for us in this paper.
\end{remark}

We will need to understand how coherence behaves under operations such as pullbacks and pushforwards. While coherence is not preserved under pullbacks of arbitrary morphisms of schemes, the story is different when the schemes are cohesive because of the following alternate characterization of coherence property.

\begin{lemma}
\label{lem:coherent-modules-cohesive-schemes}
If $X$ is a cohesive scheme, then an $\cO_X$-module $\cF$ is coherent if and only if $\cF$ is finitely presented as an $\cO_X$-module.
\end{lemma}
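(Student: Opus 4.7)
The plan is to use the fact, standard on any scheme, that coherent $\cO_X$-modules form a weak Serre subcategory of $\Mod_{\cO_X}$: direct sums, kernels, cokernels, extensions, and images of morphisms of coherent modules are coherent (see e.g.\ \cite[\href{https://stacks.math.columbia.edu/tag/01BY}{Tag 01BY}]{stacks-project}). Given this, both implications are short.

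For the forward direction, if $\cF$ is coherent then by axiom (a) it is of finite type, so locally admits a surjection $\cO_X^{\oplus n} \twoheadrightarrow \cF$; axiom (b) says the kernel is of finite type, yielding a finite presentation. This direction does not use cohesiveness of $X$.

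For the reverse direction, assume $X$ is cohesive, so that $\cO_X$ is coherent. Finite direct sums of coherent sheaves being coherent, each $\cO_X^{\oplus n}$ is coherent. If $\cF$ is finitely presented, then locally we may write an exact sequence
\[
\cO_X^{\oplus m} \longrightarrow \cO_X^{\oplus n} \longrightarrow \cF \longrightarrow 0,
\]
and since coherent sheaves are closed under cokernels, $\cF$ is coherent. (Coherence is local, so it suffices to verify the condition on the open cover on which a presentation exists.)

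The only potential obstacle is in citing rather than reproving the fact that coherent modules form an abelian subcategory; with that in hand, the argument is essentially formal. The content of the lemma is thus entirely carried by the hypothesis that $\cO_X$ itself is coherent, which promotes the generally weaker notion of finite presentation to the stronger notion of coherence.
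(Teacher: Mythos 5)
Your proof is correct and follows essentially the same route as the paper's: both use that coherent $\cO_X$-modules form an abelian (weak Serre) subcategory closed under cokernels, so that when $\cO_X$ is itself coherent a finite presentation $\cO_X^{\oplus m}\to\cO_X^{\oplus n}\to\cF\to 0$ forces $\cF$ to be coherent. Your forward direction is spelled out in slightly more detail than the paper (which simply calls it clear), but there is no substantive difference.
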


\begin{proof}
While it is clear that coherence implies finite presentation, the converse follows from the coherence of $\cO_X$ and the fact that the category of coherent $\cO_X$-modules is an abelian sub-category of the category of quasi-coherent $\cO_X$-modules, and so is closed under cokernels.
\end{proof}

Since finite-presentation behaves well under pullbacks, one immediately obtains the following.

\begin{corollary}
\label{cor:pullback-cohesive-scheme}
Let $f: X \rightarrow Y$ be a morphism of cohesive schemes. If $\cF$ is a coherent $\cO_Y$-module then $f^*\cF$ is a coherent $\mathcal{O}_X$-module. 
\end{corollary}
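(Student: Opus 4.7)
The plan is to reduce everything to finite presentation via Lemma \ref{lem:coherent-modules-cohesive-schemes}, which applies because both $X$ and $Y$ are assumed cohesive. This is the key observation that makes the corollary routine: on a cohesive scheme, coherence and finite presentation coincide, and finite presentation is well-behaved under pullback on any scheme.

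More concretely, I would proceed as follows. First, since $Y$ is cohesive and $\cF$ is coherent, Lemma \ref{lem:coherent-modules-cohesive-schemes} gives that $\cF$ is finitely presented as an $\cO_Y$-module. Next, I would use the standard fact that the pullback functor $f^*\colon \QCoh(Y) \to \QCoh(X)$ preserves finite presentation. This is because locally on affine opens $f^*$ is given by $M \mapsto M \otimes_{\cO_Y(V)} \cO_X(U)$, which takes a finite presentation $\cO_Y(V)^{\oplus m} \to \cO_Y(V)^{\oplus n} \to M \to 0$ to the finite presentation $\cO_X(U)^{\oplus m} \to \cO_X(U)^{\oplus n} \to f^*\cF(U) \to 0$, using that tensor product is right exact and commutes with finite direct sums. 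Finally, since $X$ is also cohesive, applying Lemma \ref{lem:coherent-modules-cohesive-schemes} in the other direction shows that the finitely presented $\cO_X$-module $f^*\cF$ is coherent.

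There is no substantive obstacle here; the result is essentially a formal consequence of the characterization of coherent modules on cohesive schemes as finitely presented modules. The content of the corollary lies entirely in the hypothesis that both source and target are cohesive, since in general the pullback of a coherent module need not be coherent (finite presentation is preserved, but the kernel-finiteness condition in the definition of coherence can fail on the target without a cohesiveness assumption).
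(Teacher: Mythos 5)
Your argument is exactly the one the paper intends: use Lemma \ref{lem:coherent-modules-cohesive-schemes} to translate coherence into finite presentation on the cohesive schemes $X$ and $Y$, note that $f^*$ preserves finite presentation by right exactness of tensor, and translate back. The paper states this in one line ("Since finite-presentation behaves well under pullbacks, one immediately obtains the following"), and your write-up is a correct, more detailed version of the same reasoning.
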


On the other hand, coherence always descends under faithfully flat and quasi-compact pullback, regardless of whether the schemes are cohesive.

\begin{proposition}
\label{prop:fpqc-descent}
Let $f: X \rightarrow Y$ be a faithfully flat and quasi-compact morphism of schemes.
\begin{enumerate}
    \item[{\em (i)}] If $\cF$ be a quasi-coherent sheaf of $\cO_Y$-modules and
        $f^*\cF$ is coherent, then $\cF$ is coherent.
    \item[{\em (ii)}] If $X$ and $Y$ are cohesive and $F \in \Dscr_{\qc}(Y)$, then $F \in \Dscr^\bullet_{\coh}(Y) \Leftrightarrow f^*F \in \Dscr^\bullet_{\coh}(X)$ for $\bullet = +, -, b$. 
\end{enumerate}
\end{proposition}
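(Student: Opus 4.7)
For part (i), the plan is to verify the two defining conditions of coherence for $\cF$ directly. Finite type descends from $f^*\cF$ to $\cF$ because ``of finite type'' is local on the base in the fpqc topology, which is a standard descent result (Stacks Project). For the kernel condition, given any open $U \subseteq Y$ and a map $\eta : \cO_U^{\oplus n} \to \cF|_U$, I need $\ker(\eta)$ to be of finite type. The restriction $f^{-1}(U) \to U$ remains faithfully flat and quasi-compact, and since $f$ is flat, pullback is exact, so $f^* \ker(\eta) \iso \ker(f^* \eta)$. The right-hand side is of finite type because $f^* \cF$ is coherent on $X$; another application of fpqc descent of finite type then delivers that $\ker(\eta)$ is of finite type on $U$.

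For part (ii), the forward implication follows formally from Corollary \ref{cor:pullback-cohesive-scheme} together with flatness of $f$: since $f$ is flat we have $H^i(f^* F) \iso f^* H^i(F)$, and the pullback of a coherent sheaf on the cohesive scheme $Y$ is coherent on the cohesive scheme $X$; the range of cohomology on $f^*F$ is contained in the range of cohomology on $F$, so the decorations $+,-,b$ transfer. For the reverse implication, if $f^* F \in \Dscr^\bullet_{\coh}(X)$, then for each $i$ the sheaf $f^* H^i(F) \iso H^i(f^* F)$ is coherent, so by part (i) applied to the quasi-coherent sheaf $H^i(F)$, the sheaf $H^i(F)$ is coherent on $Y$. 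Boundedness transfers because faithful flatness of $f$ forces $H^i(F) = 0$ if and only if $f^* H^i(F) = 0$, so the vanishing range of cohomology of $f^*F$ is exactly that of $F$.

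The only nontrivial input is fpqc descent of the finite type property, which is standard; everything else is a formal consequence of exactness of $f^*$ (from flatness), faithful flatness (for vanishing), and Corollary \ref{cor:pullback-cohesive-scheme}. I do not expect any serious obstacle.
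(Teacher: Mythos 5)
Your proof is correct and follows essentially the same route as the paper's: fpqc descent of the finite type property plus exactness of flat pullback handle both conditions in (i), and (ii) reduces to Corollary~\ref{cor:pullback-cohesive-scheme}, part (i), and faithful flatness for the vanishing range.
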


\begin{proof}
    The property of being of finite type satisfies fpqc descent \cite[\href{https://stacks.math.columbia.edu/tag/05AZ}{Tag 05AZ}]{stacks-project}. Thus, $\cF$ is of finite type because $f^*\cF$ if of finite type. Now suppose $U \subseteq Y$ is an open subset and $\eta: \cO_U^{\oplus} \rightarrow \cF|_U$ is a map of $\cO_U$-modules. Since $f|_{f^{-1}(U)}: f^{-1}(U) \rightarrow U$ is also flat (and quasi-compact)  
    $$\ker(f|_{f^{-1}(U)}^*\eta) = f|_{f^{-1}(U)}^*\ker(\eta)$$
    is of finite type because $f^*\cF$ is coherent. Then by descent applied to the fpqc morphism $f|_{f^{-1}(U)}$, we have that $\ker(\eta)$ is of finite type, proving coherence of $\cF$. 
    
    For (ii), since $f$ is faithfully flat, $H^i(f^*F) \simeq f^*H^i(F) = 0$ if and only if
    $H^i(F) = 0$ for any $F \in \Dscr_{\qc}(Y)$ (here we use quasi-coherence of
    the cohomology sheaves). Therefore $f^*F \in \Dscr^\bullet_{\qc}(X)$ if and
    only if $F \in \Dscr^\bullet_{\qc}(Y)$, for $\bullet = +, -, b$.
    Furthermore, since $X, Y$ are both cohesive, Corollary
    \ref{cor:pullback-cohesive-scheme} and part (i) of this proposition imply
    that $H^i(f^*F) = f^*H^i(F)$ is coherent if and only if $H^i(F)$ is
    coherent. This completes the proof of (ii).
\end{proof}

\subsection{Universal cohesiveness} Universal cohesiveness is a stable coherence property of schemes.
While the notion is not new, the terminology was introduced recently by
 Fujiwara--Kato \cite[Chap. 0, Def.
5.1.1]{FK18}.

\begin{definition}\label{def:universally-cohesive}
A scheme $X$ is \emph{universally cohesive} if
for any morphism $f: Y \rightarrow X$ that is locally of finite presentation,
the scheme $Y$ is cohesive, that is, $\cO_Y$ is a coherent sheaf. We will say
that a ring $R$ is \emph{universally cohesive} if $\Spec(R)$ is universally
cohesive.
\end{definition}

For example, any locally Noetherian scheme is universally cohesive.
Furthermore, if $X$ is universally cohesive, then any scheme $Y$ that is
locally of finite presentation over $X$ is also universally cohesive. In particular,
the quotient of a universally cohesive ring $A$ by a finitely generated is
universally cohesive.  Note, 
a universally cohesive scheme is cohesive. Our interest in
universally cohesive schemes stems from the fact that valuation rings are
universally cohesive, a well-known fact that we will soon prove as a
consequence of a finiteness result of Nagata (Theorem
\ref{thm:Nagata-amazing}).

The following straightforward lemma shows that universal cohesiveness is
preserved under arbitrary localizations.

\begin{lemma}
\label{lem:polynomial-extensions}
A ring $A$ is universally cohesive if and only if for all $n > 0$, 
the polynomial ring $A[x_1,\dots,x_n]$ is coherent. In particular,
if $A$ is universally cohesive, then $S^{-1}A$ is universally cohesive for
any multiplicative set $S \subset A$.
\end{lemma}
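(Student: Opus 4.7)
The plan is to prove the equivalence and then deduce the localization statement from the polynomial characterization.

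For the forward direction, note that for any $n \geq 0$ the ring $A[x_1,\dots,x_n]$ is finitely presented as an $A$-algebra, so $\Spec A[x_1,\dots,x_n]\to\Spec A$ is (locally) of finite presentation, and thus universal cohesiveness of $A$ immediately gives coherence of $A[x_1,\dots,x_n]$.

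For the converse, suppose every polynomial ring $A[x_1,\dots,x_n]$ is coherent. I want to show that for any morphism $f\colon Y\to\Spec A$ that is locally of finite presentation, $\cO_Y$ is a coherent sheaf. Since coherence of the structure sheaf can be checked on an affine open cover, I reduce to showing that every finitely presented $A$-algebra $B$ is coherent. Any such $B$ may be written as $A[x_1,\dots,x_n]/I$ with $I$ a finitely generated ideal, and then I invoke the standard fact that a quotient of a coherent ring by a finitely generated ideal is again coherent (because such an ideal is automatically finitely presented, hence the quotient is a finitely presented module, and finite presentation of arbitrary finitely generated ideals of $B$ is inherited from those of $A[x_1,\dots,x_n]$; see e.g.\ \cite[\href{https://stacks.math.columbia.edu/tag/05CW}{Tag 05CW}]{stacks-project}). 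Taking $n=0$ in particular recovers coherence of $A$ itself from coherence of $A[x]$, as a quotient by the finitely generated ideal $(x)$.

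For the localization claim, I use the equivalence just established: it suffices to verify that $S^{-1}A[x_1,\dots,x_n]$ is coherent for every $n$. But $S^{-1}A[x_1,\dots,x_n]=S^{-1}\bigl(A[x_1,\dots,x_n]\bigr)$, and localization preserves coherence of rings (again a standard fact: finitely generated ideals of $S^{-1}R$ are extensions of finitely generated ideals of $R$, and finite presentation is preserved by flat base change). The main obstacle, such as it is, is simply assembling these standard facts about coherent rings—quotients by finitely generated ideals and localizations stay coherent—rather than anything essentially new; once these are on hand, the lemma is a bookkeeping exercise in unwinding the definitions.
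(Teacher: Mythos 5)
Your proof is correct and takes essentially the same route as the paper: express a finitely presented $A$-algebra as $A[x_1,\dots,x_n]/I$ with $I$ finitely generated, use that quotients of coherent rings by finitely generated ideals are coherent, and for the localization part observe that $S^{-1}A[x_1,\dots,x_n]$ is a localization of $A[x_1,\dots,x_n]$ and coherence is preserved by localization. The only cosmetic difference is that you spell out a bit more of why the quotient fact holds and note the $n=0$ special case; the paper instead just cites Glaz's book for both facts.
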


\begin{proof}
For the if and only if statement, the non-trivial assertion is the 
backward implication. Since a finitely presented $A$-algebra
is a quotient of a polynomial $A$-algebra $A[x_1,\dots,x_n]$ by a 
finitely generated ideal for some $n > 0$, and since
quotients of coherent rings by finitely generated ideals are coherent
\cite[Thm. 2.4.1(1)]{Gla89}, the backward implication follows.

A polynomial ring
over $S^{-1}A$ is a localization of a polynomial ring over $A$.
As coherence is preserved under localization \cite[Thm. 2.4.2]{Gla89}, 
it follows that universal cohesiveness does as well.    
\end{proof}

Higher direct images of proper, finitely presented morphisms over universally
cohesive schemes satisfy many of the pleasing finiteness properties of higher
direct images of proper morphisms over Noetherian schemes, as shown by the
following result of Fujiwara and Kato~\cite[Chap. I, Thm. 8.1.3]{FK18}.
Below, for a coherent scheme $Y$, we let $\Dscr^b_\coh(Y)$ denote the full triangulated subcategory of
$\Dscr^b_\qc(Y)$ consisting of complexes with bounded coherent cohomology sheaves.

\begin{theorem}[\cite{FK18}]
\label{thm:Fujiwara-Kato-finiteness}
Let $f: X \rightarrow Y$ be a proper, finitely presented morphism, where $Y$ is
    a universally cohesive quasi-compact scheme (thus $X$ is also universally
    cohesive and quasi-compact). If $\cF\in\Dscr^b_\coh(X)$, then
$$\bR f_*\cF\in \Dscr^b_{\coh}(Y).$$
In particular, if $\cF$ is a finitely presented $\cO_X$-module, then $\bR f_*\cF \in \Dscr^b_{\coh}(Y)$, that is, the higher direct images of $\cF$ are coherent (equivalently, finitely presented).
\end{theorem}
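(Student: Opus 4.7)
The plan is to mimic the classical Grothendieck finiteness theorem for proper morphisms over Noetherian schemes, with the universal cohesiveness hypothesis compensating for the lack of Noetherian hypotheses. First I would reduce to the case of a single coherent sheaf: by induction on the cohomological amplitude of $\cF$ using the distinguished triangle $\tau_{\leq n-1}\cF \to \cF \to H^n(\cF)[-n]$, it suffices to treat the case where $\cF$ is a coherent $\cO_X$-module concentrated in degree zero. The boundedness of $\bR f_*\cF$ is automatic from Theorem \ref{thm:derived-pushforward-bounded-cpx}, so the real content is that each $R^if_*\cF$ is coherent on $Y$. Since coherence is local on $Y$ and formation of $R^if_*$ commutes with flat base change, I may further assume $Y = \Spec(A)$ with $A$ universally cohesive.

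Next I would invoke a version of Chow's lemma valid in the qcqs finitely presented setting (available since $f$ is proper finitely presented and $Y$ is quasi-compact) to produce a projective finitely presented morphism $g: X' \to X$ with $f\circ g$ projective, and $g$ an isomorphism over a quasi-compact open on which $f$ is already projective. A dévissage using the triangle relating $\cF$ to $\bR g_*g^*\cF$ and the cone of $\cF \to \bR g_*g^*\cF$ (which is supported on a proper closed subscheme of smaller ``dimension'' in a suitable sense), combined with Noetherian induction on the underlying topological space of the fibers, reduces the problem to the case that $f$ itself is projective finitely presented. Factoring $f$ through a closed immersion $X \hookrightarrow \PP^n_Y$ (along which pushforward preserves coherence), one is left with the projective space $\PP^n_A$ and a coherent sheaf on it.

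On $\PP^n_A$ the key inputs are two: first, the explicit Čech computation that $H^i(\PP^n_A, \cO(d))$ is a finite free (hence finitely presented) $A$-module for every $i, d$; second, the Serre generation statement that any coherent $\cF$ on $\PP^n_A$ admits a surjection $\cO(-d)^{\oplus m} \twoheadrightarrow \cF$ for some $d \gg 0$ and some $m$. Iterating this surjection produces a (possibly unbounded) resolution by finite sums of twists, but combined with Serre vanishing above a fixed degree one obtains an effective replacement of $\cF$ by a bounded-length complex whose pushforward is computed by the explicit formula. This is where I expect the main obstacle to lie: one must verify that the kernel of a surjection $\cO(-d)^{\oplus m}\twoheadrightarrow \cF$ is again coherent on $\PP^n_A$, which is exactly the point at which universal cohesiveness of $A$ is essential — it ensures via Lemma \ref{lem:coherent-modules-cohesive-schemes} that $\PP^n_A$ is a cohesive scheme, so that the category of coherent sheaves is abelian and closed under kernels of maps between finitely presented sheaves. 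Without this stability, the iterative devissage collapses, and it is precisely to obtain this stability for all finitely presented $X \to Y$ simultaneously that Fujiwara and Kato formulate the hypothesis as universal cohesiveness of $Y$.
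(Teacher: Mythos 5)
The paper does not actually prove this theorem: it is stated as a citation to Fujiwara--Kato \cite[Chap. I, Thm. 8.1.3]{FK18}, and the only internal argument offered is the Remark immediately following, which handles the case of a \emph{flat} proper finitely presented $f$ by observing that over a cohesive scheme $\Dscr^b_\coh$ is the bounded part of the pseudo-coherent objects and that $\bR f_*$ preserves pseudo-coherence for flat proper finitely presented morphisms (Stacks Tag 0CSD). So there is no parallel proof in the paper to compare against; you are attempting to reconstruct the external result.

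Your reconstruction has a genuine gap at the d\'evissage step, precisely where you write ``combined with Noetherian induction on the underlying topological space of the fibers.'' In Grothendieck's original argument (EGA III 3.2.1), the induction after applying Chow's lemma runs over closed subschemes of $X$ itself, not of its fibers, and it terminates because $X$ has a Noetherian topological space. Here $Y$ (and hence $X$) can be badly non-Noetherian --- e.g.\ $Y=\Spec(V)$ for a valuation ring of infinite rank has an infinite descending chain of irreducible closed subsets --- so the exceptional loci produced by iterating Chow's lemma need not stabilize, and there is no Noetherian induction principle available on $X$. Inducting on the (Noetherian) fibers does not repair this: the cone of $\cF\to\bR g_*g^*\cF$ is supported on a closed subscheme of $X$ that can still dominate all of $Y$ and have the same fiber dimension. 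This is exactly the obstruction that leads Fujiwara--Kato (following SGA~6) to replace the topological d\'evissage with the formalism of pseudo-coherent morphisms: over a universally cohesive $Y$, any finitely presented $f$ is a pseudo-coherent morphism (the ideal of a local closed immersion into $\AA^n_Y$ is finitely presented, hence pseudo-coherent since $\AA^n_Y$ is cohesive), and one proves directly that $\bR f_*$ preserves pseudo-coherence for proper pseudo-coherent morphisms, with no topological induction on $X$. Your diagnosis of where universal cohesiveness enters (closure of coherent sheaves under kernels, so the Serre-type resolution on $\PP^n_A$ stays in the coherent category) is correct and is indeed the same role it plays in the pseudo-coherent approach, but the reduction from proper to projective as you have sketched it does not go through.
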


\begin{remark}
Theorem \ref{thm:Fujiwara-Kato-finiteness} is well-known if $f: X \rightarrow
    Y$, in addition to satisfying the hypotheses of loc. cit., is also flat.
    Indeed, when $X$ (resp. $Y$) is cohesive, the objects of
    $\Dscr^b_{\coh}(X)$ (resp. $\Dscr^b_{\coh}(Y)$) are precisely the
    \emph{pseudo-coherent} objects of $\Dscr_{\qc}(X)$ (resp. $\Dscr_{\qc}(Y)$)
    with bounded cohomology~\cite[p. 115, Cor. 3.5 b)]{SGA6}. By
    \cite[\href{https://stacks.math.columbia.edu/tag/0CSD}{Tag
    0CSD}]{stacks-project}, $\bR f_*$ preserves pseudo-coherence because $f$ is
    flat, proper and finitely presented. Moreover, $\bR f_*$ maps
    $\Dscr^b_{\qc}(X)$ into $\Dscr^b_{\qc}(Y)$ because $Y$ is quasi-compact
    (Theorem \ref{thm:derived-pushforward-bounded-cpx}), and so, one concludes
    that $\bR f_*(\Dscr^b_{\coh}(X)) \subseteq \Dscr^b_{\coh}(Y)$. In our
    investigation of the derived splinter condition for valuation rings, we
    will often be able to assume that $f$ is flat and finitely presented in
    addition to being proper because of Corollary
    \ref{cor:dominating-proper-by-flat}. Hence we will not need the full
    strength of Theorem \ref{thm:Fujiwara-Kato-finiteness}.
\end{remark}

\subsection{Descent and localization of the derived splinter property}
The notion of a derived splinter, introduced by Bhatt \cite{Bha10}, is a strengthening of 
the notion of a splinter. Since Bhatt was primarily studying Noetherian schemes when he 
introduced derived splinters, we feel a minor modification of his definition is more 
natural in a  non-Noetherian setting.

\begin{definition}  
\label{def:d-splinter}
A scheme $S$ is a \emph{derived splinter} if for every proper, finitely presented and 
surjective morphism $f: X \rightarrow S$, the induced map $\cO_S \rightarrow \bR f_*\cO_X$ 
splits in $\Dscr_\qc(S)$. We will say a ring $A$ is a \emph{derived splinter} if $\Spec(A)$ has 
this property.
\end{definition}

\begin{remark}
Our definition of a derived splinter differs from Bhatt's definition 
\cite[Def. 1.3]{Bha12} in that 
we only require $\cO_S \rightarrow \bR f_*\cO_X$ to split for proper, surjective 
morphisms that are finitely presented, instead of all proper, surjective morphisms. 
\end{remark}

The goal of this subsection is to demonstrate that the derived splinter property satisfies faithfully flat descent with appropriate coherence assumptions. The main result is the following.

\begin{proposition}
\label{prop:descent-derived-splinter}
Let $\varphi: \Spec(B) \rightarrow \Spec(A)$ be a faithfully flat map such that $\Spec(A)$ is universally
cohesive. If $B$ is a derived splinter then so is $A$.
\end{proposition}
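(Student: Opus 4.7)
The plan is to pull back along $\varphi$ to produce a splitting over $B$, then descend that splitting to $A$ by showing that the obstruction class controlling the existence of a section is preserved under faithfully flat base change.  Let $f\colon X\to\Spec(A)$ be a proper, finitely presented, surjective morphism.  Because $\Spec(A)$ is affine, splittings of $\cO_{\Spec(A)}\to\bR f_*\cO_X$ in $\Dscr_\qc(\Spec(A))$ correspond to splittings of the associated map $u\colon A\to C$ in $\Dscr(A)$, where $C:=\bR\Gamma(\Spec(A),\bR f_*\cO_X)$.  Complete $u$ to a distinguished triangle
\[
A\xrightarrow{u}C\to Q\to A[1],
\]
and apply $\mathrm{RHom}_A(-,A)$ to obtain a boundary map $\partial\colon A=\mathrm{RHom}_A(A,A)\to\mathrm{RHom}_A(Q,A)[1]$.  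A standard obstruction-theoretic argument says that $u$ admits a section in $\Dscr(A)$ if and only if the class $o(u):=\partial(\mathrm{id}_A)\in\Ext^1_A(Q,A)$ vanishes, and this obstruction is functorial in the obvious sense.

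Next, form the pullback $f'\colon X'\to\Spec(B)$, which remains proper, finitely presented, and surjective.  By the Tor-independent base change of Theorem \ref{thm:derived-pushforward-fpqc-pullback}(b) (applicable since $\varphi$ is flat), $\bL\varphi^*C\simeq C\otimes^{\bL}_A B\simeq\bR f'_*\cO_{X'}$, so the base change of $u$ along $\varphi$ is precisely the canonical map $B\to\bR f'_*\cO_{X'}$.  Since $B$ is a derived splinter, this base change admits a section in $\Dscr(B)$, so by functoriality of the obstruction, $o(u)$ is sent to $0$ under the canonical map
\[
\alpha\colon\Ext^1_A(Q,A)\longrightarrow\Ext^1_B(Q\otimes^{\bL}_A B,B).
\]
It therefore suffices to show $\alpha$ is injective.

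The main obstacle, and the place where universal cohesiveness is crucial, is establishing this injectivity.  Because $f$ is proper and finitely presented over the universally cohesive scheme $\Spec(A)$, the scheme $X$ is also universally cohesive, so $\cO_X$ is coherent; by Fujiwara--Kato (Theorem \ref{thm:Fujiwara-Kato-finiteness}), $C\in\Dscr^b_\coh(\Spec(A))$, and consequently $Q\in\Dscr^b_\coh(\Spec(A))$ as well.  Over the coherent ring $A$, a bounded complex with coherent cohomology is pseudo-coherent, so $Q$ is a pseudo-coherent object of $\Dscr(A)$.  The standard base-change isomorphism for $\mathrm{RHom}$ with pseudo-coherent first argument along a flat map (see \cite[\href{https://stacks.math.columbia.edu/tag/0A6A}{Tag 0A6A}]{stacks-project}) then yields
\[
\mathrm{RHom}_A(Q,A)\otimes^{\bL}_A B\;\simeq\;\mathrm{RHom}_B(Q\otimes^{\bL}_A B,B),
\]
and since $\varphi$ is flat we may replace $\otimes^{\bL}_A B$ on the left by $\otimes_A B$.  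Taking $H^1$, the map $\alpha$ factors as the canonical map $\Ext^1_A(Q,A)\to\Ext^1_A(Q,A)\otimes_A B$, which is injective because $\varphi$ is faithfully flat.  Hence $o(u)=0$, and $u$ admits a section, as desired.
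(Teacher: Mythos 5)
Your proof is correct and is essentially the same as the paper's: your obstruction class $o(u)=\partial(\mathrm{id}_A)\in\Ext^1_A(Q,A)$ is (up to sign) precisely the connecting map $w\colon C^\bullet\to\cO_{\Spec(A)}[1]$ that the paper shows must vanish, and your inline derivation of the injectivity of $\alpha$ from Fujiwara--Kato plus the $\mathrm{RHom}$ base-change formula is exactly the content of the paper's Proposition~\ref{prop:injective-maps-derived-cat}. The only difference is stylistic packaging; the technical ingredients and their deployment match.
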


The proof of Proposition \ref{prop:descent-derived-splinter} will exploit the following basic algebraic fact.

\begin{lemma}
\label{lem:injective-maps-cohomology}
Let $f: A \rightarrow B$ be a faithfully flat ring homomorphism. If $C^\bullet$ is a complex of $A$-modules, then the induced maps on cohomology $H^i(C^\bullet) \rightarrow H^i(C^\bullet \otimes_A B)$ are injective for all $i$.
\end{lemma}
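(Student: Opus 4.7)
The plan is to reduce the statement to the well-known fact that a faithfully flat ring map $A \to B$ is pure, i.e., for any $A$-module $M$ the natural map $M \to M \otimes_A B$ is injective (this is noted in the Preliminaries of the paper, and also at \cite[\href{https://stacks.math.columbia.edu/tag/058I}{Tag 058I}]{stacks-project}). Indeed, the assertion in the lemma is really a statement about a single $A$-module, once we use flatness to identify cohomology after base change.

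First I would observe that, because $B$ is flat over $A$, the functor $-\otimes_A B$ is exact and therefore commutes with the formation of cohomology of any complex of $A$-modules. Hence for each $i$ there is a canonical isomorphism
\[
H^i(C^\bullet \otimes_A B) \;\cong\; H^i(C^\bullet) \otimes_A B,
\]
and under this identification the map $H^i(C^\bullet) \to H^i(C^\bullet \otimes_A B)$ induced by the canonical chain map $C^\bullet \to C^\bullet \otimes_A B$ is simply the unit $M \to M \otimes_A B$ of extension of scalars, applied to $M := H^i(C^\bullet)$.

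Next I would invoke the purity of $A\to B$: since $A \to B$ is faithfully flat, it is pure, and in particular the map $M \to M \otimes_A B$ is injective for every $A$-module $M$. Applying this to $M = H^i(C^\bullet)$ yields the desired injectivity. There is no real obstacle here — the only thing to be careful about is the identification of the cohomology-of-tensor with the tensor-of-cohomology, which is immediate from flatness and is what makes the argument boil down to a one-module assertion rather than something needing a chain-level analysis.
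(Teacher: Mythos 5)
Your proof is correct and follows essentially the same route as the paper: use flatness to identify $H^i(C^\bullet \otimes_A B)$ with $H^i(C^\bullet)\otimes_A B$, and then observe that the induced map is $M \to M\otimes_A B$ for $M = H^i(C^\bullet)$, which is injective because $A \to B$ is faithfully flat (the paper cites Bourbaki for this last step, while you phrase it as purity, but these are the same fact).
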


\begin{proof}
Since cohomology commutes with flat base change, we have $H^i(C^\bullet \otimes_A B) \cong H^i(C^\bullet) \otimes_A B$, and the induced map $H^i(C^\bullet) \rightarrow H^i(C^\bullet \otimes_A B)$ can be identified under this isomorphism with the map $H^i(C^\bullet) \rightarrow H^i(C^\bullet) \otimes_A B$ obtained by tensoring $A \rightarrow B$ by $H^i(C^\bullet)$. Since $A \rightarrow B$ is faithfully flat, $H^i(C^\bullet) \rightarrow H^i(C^\bullet) \otimes_A B$ is injective by \cite[Chap. I, $\mathsection 3.5$, Prop. 9]{Bou89}, completing the proof.
\end{proof}

\begin{remark}
\label{rem:injective-cohomology-pure-pullback}
A more general version of Lemma \ref{lem:injective-maps-cohomology} appears in \cite[Lem. $2.1^\circ$]{HR76} where it is shown that if $K^i \coloneqq \coker(C^{i-1} \xrightarrow{d^{i-1}} C^i)$ and the maps $K^i \rightarrow K^i \otimes_A B$ are injective for all $i$ (for example, if $A \rightarrow B$ is pure), then the induced maps $H^i(C^\bullet) \rightarrow H^i(C^\bullet \otimes_A B)$ are injective.
\end{remark} 

Let $A$ be a commutative ring. Recall that an object $C^\bullet$ of $\Dscr(A)$ is \emph{pseudo-coherent} if it is isomorphic in $\Dscr(A)$ to a bounded above complex $E^\bullet$ of finite free $A$-modules. Since $E^\bullet$ is $K$-projective, note that any isomorphism $E^\bullet \rightarrow C^\bullet$ in $\Dscr(A)$ can be represented by an honest quasi-isomorphism of complexes \cite[\href{https://stacks.math.columbia.edu/tag/064B}{Tag 064B}]{stacks-project}.

\begin{remark}
\label{rem:pseudo-coherent-over-coherent-rings}
An $A$-module $M$ considered as the complex $M[0]$ concentrated in degree $0$ is pseudo-coherent precisely if $M$ has a resolution (possibly infinite) by finite free $A$-modules \cite[\href{https://stacks.math.columbia.edu/tag/064T}{Tag 064T}]{stacks-project}. In particular, if $M[0]$ is pseudo-coherent, then $M$ is a finitely presented $A$-module, and the converse holds if $A$ is coherent. Moreover, if $A$ is coherent, then a complex $C^\bullet$ of $A$-modules is pseudo-coherent if and only if the cohomology of $C^\bullet$ is bounded above and $H^i(C^\bullet)$ is a finitely presented (equivalently, coherent) $A$-module for all $i$ \cite[\href{https://stacks.math.columbia.edu/tag/0EWZ}{Tag 0EWZ}]{stacks-project}.
\end{remark}

Our main interest in pseudo-coherent complexes of $A$-modules is because of the next result.

\begin{proposition}
\label{prop:injective-maps-derived-cat}
Let $f: A \rightarrow B$ be a faithfully flat ring homomorphism. Let $C^\bullet, K^\bullet \in \Dscr(A)$ be complexes such that $C^\bullet$ is pseudo-coherent and $K^\bullet$ has bounded below cohomology. Then the induced map
\[
\Hom_{\Dscr(A)}(C^\bullet, K^\bullet) \rightarrow \Hom_{\Dscr(B)}(C^\bullet \otimes_A B, K^\bullet \otimes_A B)
\]
is injective.
\end{proposition}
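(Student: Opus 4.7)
The plan is to represent both $\Hom$-sets as $H^0$ of an honest $\Hom$-complex that commutes with $-\otimes_A B$, and then conclude by faithful flatness via Lemma~\ref{lem:injective-maps-cohomology}. Said differently, the statement is a $H^0$-shadow of flat base change for $\bR\Hom$, and the hypotheses (pseudo-coherence of $C^\bullet$, bounded-below cohomology of $K^\bullet$) are exactly those that make such a base change formal.

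First, using pseudo-coherence, I would replace $C^\bullet$ by a bounded-above complex $E^\bullet$ of finite free $A$-modules, invoking Remark~\ref{rem:pseudo-coherent-over-coherent-rings}. Simultaneously, since $H^i(K^\bullet)=0$ for $i<N$ for some $N$, the canonical truncation $\tau_{\ge N}K^\bullet$ is a bounded-below complex quasi-isomorphic to $K^\bullet$, so I may take $K^i=0$ for $i<N$. Flatness of $B$ over $A$ guarantees that these replacements are compatible with $-\otimes_A B$: both $E^\bullet\otimes_A B$ and $\tau_{\ge N}K^\bullet\otimes_A B$ represent $C^\bullet\otimes_A B$ and $K^\bullet\otimes_A B$ in $\Dscr(B)$. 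Because a bounded-above complex of projectives is $K$-projective, one has
\[
\Hom_{\Dscr(A)}(C^\bullet,K^\bullet)\;\iso\;H^0\bigl(\Hom^\bullet_A(E^\bullet,K^\bullet)\bigr),
\]
and analogously over $B$, since $E^\bullet\otimes_A B$ is again a bounded-above complex of finite free $B$-modules.

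The crux is the identification
\[
\Hom^\bullet_A(E^\bullet,K^\bullet)\otimes_A B\;\iso\;\Hom^\bullet_B(E^\bullet\otimes_A B,\;K^\bullet\otimes_A B).
\]
A priori the left-hand side in degree $n$ is $\prod_p\Hom_A(E^p,K^{p+n})\otimes_A B$, and tensor product does not commute with infinite products. However, because $E^\bullet$ is bounded above and $K^\bullet$ is bounded below, only finitely many $p$ satisfy both $E^p\neq 0$ and $K^{p+n}\neq 0$, so the product is a finite direct sum; finite freeness of $E^p$ then reduces each factor to a finite direct sum of copies of $K^{p+n}$, and $\otimes_A B$ passes through. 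Applying $H^0$ and invoking Lemma~\ref{lem:injective-maps-cohomology} for the faithfully flat map $A\to B$ gives
\[
H^0\bigl(\Hom^\bullet_A(E^\bullet,K^\bullet)\bigr)\hookrightarrow H^0\bigl(\Hom^\bullet_A(E^\bullet,K^\bullet)\otimes_A B\bigr),
\]
which under the preceding identifications is the map in the proposition.

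The principal obstacle is exactly the product-to-tensor commutation: without the double boundedness of $E^\bullet$ and $K^\bullet$ the Hom-complex in each degree would be a genuine infinite product and the identification would fail even for flat $B$. That is precisely why the proposition requires $C^\bullet$ pseudo-coherent and $K^\bullet$ bounded-below in cohomology; once these are in place the bookkeeping above is routine and faithful flatness does the rest.
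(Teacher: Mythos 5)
Your proof is correct and follows essentially the same strategy as the paper: reduce to injectivity on $H^0$ of a suitable $\Hom$-complex after base change, then invoke Lemma~\ref{lem:injective-maps-cohomology}. The only difference is that where the paper cites the flat base-change isomorphism $\bR\Hom_A(C^\bullet,K^\bullet)\otimes_A B\simeq\bR\Hom_B(C^\bullet\otimes_A B,K^\bullet\otimes_A B)$ as a black box from the Stacks Project, you derive it by hand, correctly isolating the double-boundedness of $E^\bullet$ and $\tau_{\ge N}K^\bullet$ as the reason the degreewise products collapse to finite sums and hence commute with $\otimes_A B$.
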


\begin{proof}
By \cite[\href{https://stacks.math.columbia.edu/tag/0A6A}{Tag 0A6A}(3)]{stacks-project} and flatness of $f$, we have an isomorphism in $\Dscr(B)$
\[
\bR\Hom_A(C^\bullet, K^\bullet) \otimes_A B \xrightarrow{\sim} \bR\Hom_B(C^\bullet \otimes_A B, K^\bullet \otimes_A B).
\]
Note that at the heart of the above isomorphism is the basic commutative algebra fact that if $M$ is a finitely-presented $A$-module, then for any $A$-module $N$, $\Hom_A(M,N) \otimes_A B \simeq \Hom_B(M \otimes_A B, N \otimes_A B)$ if $B$ is $A$-flat.

By Lemma \ref{lem:injective-maps-cohomology}, the map of complexes 
\[
\bR\Hom_A(C^\bullet, K^\bullet) \rightarrow \bR\Hom_A(C^\bullet, K^\bullet) \otimes_A B
\]
induces injective maps on cohomology. In particular, we then have an injection
\[
H^0\bR\Hom_A(C^\bullet, K^\bullet) \hookrightarrow H^0(\bR\Hom_A(C^\bullet, K^\bullet) \otimes_A B) \simeq H^0\bR\Hom_B(C^\bullet \otimes_A B, K^\bullet \otimes_A B).
\]
But $H^0\bR\Hom_A(C^\bullet, K^\bullet) = \Hom_{\Dscr(A)}(C^\bullet, K^\bullet)$ and $H^0\bR\Hom_B(C^\bullet \otimes_A B, K^\bullet \otimes_A B) = \Hom_{\Dscr(B)}(C^\bullet \otimes_A B, K^\bullet \otimes_A B)$ \cite[\href{https://stacks.math.columbia.edu/tag/0A64}{Tag 0A64}]{stacks-project}. This completes the proof.
\end{proof}

We can now prove Proposition \ref{prop:descent-derived-splinter}.

\begin{proof}[Proof of Proposition \ref{prop:descent-derived-splinter}]
Let $f: X \rightarrow \Spec(A)$ be a proper, surjective, finitely presented morphism. We want to show that the canonical map 
\[
\cO_{\Spec(A)} \rightarrow \bR f_*\cO_X
\]
splits in $\Dscr_{\qc}(\Spec(A))$. Completing $\cO_{\Spec(A)} \rightarrow \bR f_*\cO_X$ to an exact triangle
\begin{equation}
\label{eq:exact-triangle-descent}
\cO_{\Spec(A)} \rightarrow \bR f_*\cO_X \rightarrow C^\bullet \xrightarrow{w} \cO_{\Spec(A)}[1],
\end{equation}
note that $\cO_{\Spec(A)} \rightarrow \bR f_*\cO_X$ splits if and only if $w = 0$ \cite[Cor. 1.2.7, Rem. 1.2.9]{Nee01}.
Now consider the pullback square
\begin{center}
\begin{tikzcd}
  X_B \arrow[r, "\phi"] \arrow[d, "f_B"]
    & X \arrow[d, "f"] \\
  \Spec(B) \arrow[r, "\varphi"]
&\Spec(A). \end{tikzcd}
\end{center}
Applying $\bL\varphi^* = \varphi^*$ to the exact triangle in (\ref{eq:exact-triangle-descent}) and using Theorem \ref{thm:derived-pushforward-fpqc-pullback}, we get an exact triangle
\[
\cO_{\Spec(B)} \rightarrow \bR(f_B)_*\cO_{X_B} \rightarrow \varphi^*C^\bullet \xrightarrow{\varphi^*w} \cO_{\Spec(B)}[1].
\]
Since $\Spec(B)$ is a derived splinter and the base change map $f_B: X_B \rightarrow \Spec(B)$ is proper, surjective and finitely presented, it follows that $\cO_{\Spec(B)} \rightarrow \bR(f_B)_*\cO_{X_B}$ splits, or equivalently, $\varphi^*w = 0$. We want to show that this implies $w = 0$, that is, $\cO_{\Spec(A)} \rightarrow \bR f_*\cO_X$ splits.

The complex $\bR f_*\cO_X$ has coherent cohomology by Theorem \ref{thm:Fujiwara-Kato-finiteness} and universal cohesiveness of $\Spec(A)$. Consequently, $C^\bullet$ also has coherent cohomology because coherent modules form a weak Serre subcategory of the category of $\cO_{\Spec(A)}$-modules and $\cO_{\Spec(A)}$ is itself coherent by assumption \cite[\href{https://stacks.math.columbia.edu/tag/01BY}{Tag 01BY}]{stacks-project}. In particular, $C^\bullet$ is an object of $\Dscr_{\qc}(\Spec(A))$. Identifying $C^\bullet$ with the complex of $\cO_{\Spec(A)}$-modules associated to the complex of $A$-modules 
$\bR\Gamma(\Spec(A),C^\bullet)$
via the equivalence between $\Dscr_{\qc}(\Spec(A))$ and $\Dscr(\QCoh(\Spec(A))$
        \cite[\href{https://stacks.math.columbia.edu/tag/08D9}{Tag
        08D9}(3)]{stacks-project}, it follows that $\bR\Gamma(\Spec(A),C^\bullet)$ is a
        pseudo-coherent object of $\Dscr(A)$
        \cite[\href{https://stacks.math.columbia.edu/tag/0EWZ}{Tag 0EWZ}]{stacks-project}.
        Finally, 
        using the commutativity of the diagram 
\begin{center}
\begin{tikzcd}
  \Dscr(A) \arrow[r, "\sim"] \arrow[d, "- \otimes_A B"]
    &  \Dscr_{\qc}(\cO_{\Spec(A)}) \arrow[d, "\varphi^*"] \\
  \Dscr(B) \arrow[r, "\sim"]
&\Dscr_{\qc}(\cO_{\Spec(B)}) 
\end{tikzcd}
\end{center}
as shown in \cite[\href{https://stacks.math.columbia.edu/tag/08DW}{Tag 08DW}]{stacks-project}, we see that the induced pullback map
$$\Hom_{\Dscr(\Spec(A))}(C^\bullet, \cO_{\Spec(A)}[1]) \longrightarrow \Hom_{\Dscr(\Spec(B))}(\varphi^*C^\bullet, \cO_{\Spec(B)}[1])$$
is injective by Proposition \ref{prop:injective-maps-derived-cat}. Therefore, $\varphi^*w = 0$ implies $w = 0$, as desired.
\end{proof}

\begin{remark}
Even if $\Spec(A)$ is not universally cohesive, the proof of Proposition \ref{prop:descent-derived-splinter} shows when $f: X \rightarrow \Spec(A)$ is a proper, surjective, finitely presented morphism 
such that $\bR f_*\cO_X$ is pseudo-coherent, then $\cO_{\Spec(A)} \rightarrow \bR f_*\cO_X$ splits 
as long as $B$ is a faithfully flat extension of $A$ that is a derived splinter. Indeed, the cone $C^\bullet$ of $\cO_{\Spec(A)} \rightarrow \bR f_*\cO_X$ is then pseudo-coherent by 
\cite[\href{https://stacks.math.columbia.edu/tag/08CD}{Tag 08CD}]{stacks-project} because 
$\cO_{\Spec(A)}$ is trivially pseudo-coherent, and the rest of the proof applies verbatim.
\end{remark}

A similar flat base change argument shows that for a universally cohesive ring $A$, the derived
splinter property behaves well with respect to localization.

\begin{lemma}
\label{lem:d-splinter-local}
Let $A$ be a universally cohesive domain. The following are equivalent:
\begin{enumerate}
	\item[{\em (1)}] $A$ is a derived splinter.
	\item[{\em (2)}] $A_\fp$ is a derived splinter for each prime ideal
$\fp$ of $A$.
	\item[{\em (3)}] For any multiplicative set $S \subset A$, $S^{-1}A$ is 
	a derived splinter.
\end{enumerate}
\end{lemma}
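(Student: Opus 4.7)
The plan is to prove the cycle $(1) \Rightarrow (3) \Rightarrow (2) \Rightarrow (1)$. The implication $(3) \Rightarrow (2)$ is immediate on taking $S = A \setminus \fp$, so the real content is in the other two directions, which require rather different arguments.

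For $(1) \Rightarrow (3)$, let $B = S^{-1}A$ and let $g \colon Y \to \Spec(B)$ be proper, surjective, and finitely presented. Writing $B = \colim_{s \in S} A_s$, I would use the standard limit formalism of EGA~IV \S 8 to descend $g$ to a proper, finitely presented morphism $g_0 \colon Y_0 \to \Spec(A_{s_0})$ for some $s_0 \in S$, whose base change along $\Spec(B) \to \Spec(A_{s_0})$ recovers $g$. The domain hypothesis forces surjectivity of $g_0$: the image $g_0(Y_0)$ is closed by properness, contains the image of $\Spec(B)$, and therefore contains the generic point of the domain $A_{s_0}$, so $g_0(Y_0) = \Spec(A_{s_0})$. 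I would then extend to
\[
f \colon X \coloneqq Y_0 \sqcup \Spec(A/s_0 A) \longrightarrow \Spec(A),
\]
which is proper, surjective, and finitely presented (the closed piece is finitely presented because $s_0 A$ is principal). Hypothesis $(1)$ splits $\cO_{\Spec(A)} \to \bR f_* \cO_X$, and flat base change along $A \to B$ (Theorem~\ref{thm:derived-pushforward-fpqc-pullback}) transports this to the desired splitting of $\cO_{\Spec(B)} \to \bR g_* \cO_Y$, since $\Spec(A/s_0 A) \times_A \Spec(B) = \emptyset$ (because $s_0$ is a unit in $B$).

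For $(2) \Rightarrow (1)$, let $f \colon X \to \Spec(A)$ be proper, surjective, and finitely presented, and complete $\cO_{\Spec(A)} \to \bR f_* \cO_X$ to a distinguished triangle with cone $C^\bullet$ and boundary $w \colon C^\bullet \to \cO_{\Spec(A)}[1]$; as in Proposition~\ref{prop:descent-derived-splinter}, splitting is equivalent to $w = 0$. Universal cohesiveness together with Theorem~\ref{thm:Fujiwara-Kato-finiteness} puts $\bR f_* \cO_X$, and hence $C^\bullet$, in $\Dscr^b_{\coh}(\Spec(A))$; under the equivalence $\Dscr_{\qc}(\Spec(A)) \simeq \Dscr(A)$ this corresponds to a pseudo-coherent complex over $A$. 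For each prime $\fp$, flat base change along $A \to A_\fp$ gives the analogous triangle over $\Spec(A_\fp)$ whose boundary is $w \otimes_A A_\fp$, and hypothesis $(2)$ forces this base change to vanish. Since $C^\bullet$ is pseudo-coherent and $A_\fp$ is $A$-flat, the natural map
\[
\Hom_{\Dscr(A)}(C^\bullet, A[1]) \otimes_A A_\fp \longrightarrow \Hom_{\Dscr(A_\fp)}(C^\bullet \otimes_A A_\fp, A_\fp[1])
\]
is an isomorphism, so $w$ has zero image in every localization of the $A$-module $\Hom_{\Dscr(A)}(C^\bullet, A[1])$, whence $w = 0$ by the standard local-global principle.

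The main obstacle is the extension step in $(1) \Rightarrow (3)$: one must promote a proper, finitely presented morphism over $\Spec(S^{-1}A)$ to a proper, surjective, finitely presented morphism over $\Spec(A)$ without disturbing its pullback to $\Spec(B)$. The domain hypothesis enters precisely here to guarantee surjectivity after descent to $\Spec(A_{s_0})$; the disjoint union with $V(s_0)$ then supplies surjectivity over the complementary closed locus and disappears upon base change to $B$. In the direction $(2) \Rightarrow (1)$, the crucial ingredient is Fujiwara--Kato finiteness, whose pseudo-coherence output is what makes derived $\Hom$ commute with the flat localizations $A \to A_\fp$.
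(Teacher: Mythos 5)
Your argument for $(2) \Rightarrow (1)$ is correct and essentially coincides with the paper's: after identifying splitting with vanishing of the boundary map $w$ of the cone $C^\bullet$, one uses Fujiwara--Kato finiteness to get pseudo-coherence of $C^\bullet$, then flat base change along $A \to A_\fp$ to commute $\bR\Hom$ with localization, and the local-global principle. The paper phrases this in terms of surjectivity of $\Hom_{\Dscr(A)}(\bR\Gamma(X,\cO_X),A) \to \Hom_{\Dscr(A)}(A,A)$ checked after localizing, while you phrase it as vanishing of $w$ checked after localizing; these are two formulations of the same argument. Likewise $(3)\Rightarrow(2)$ is fine.

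The gap is in $(1) \Rightarrow (3)$. Your $f: X = Y_0 \sqcup \Spec(A/s_0A) \to \Spec(A)$ is \emph{not} proper. The morphism $Y_0 \to \Spec(A)$ factors as the proper $g_0: Y_0 \to \Spec(A_{s_0})$ followed by the open immersion $\Spec(A_{s_0}) \hookrightarrow \Spec(A)$, and open immersions are not universally closed unless they are clopen. Concretely, $Y_0$ is closed in $X$ but its image under $f$ is $\Spec(A_{s_0})$, which is open and (for $s_0$ a non-unit, non-zero-divisor) not closed; so $f$ is not even a closed map. Adding the disjoint component $\Spec(A/s_0 A)$ fixes surjectivity of the \emph{image}, but properness of a morphism from a disjoint union forces properness of each component's structure map to $\Spec(A)$, which fails here. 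This is precisely the difficulty the paper handles by Nagata compactification: one compactifies $Y_0 \to \Spec(A)$ to a proper morphism $\overline{Y_0} \to \Spec(A)$, replaces $\overline{Y_0}$ by the scheme-theoretic image of $Y_0$ to make $Y_0$ dense, and then a properness-forces-closedness argument shows $(\varphi')^{-1}(\Spec(A_{s_0})) = Y_0$, so the resulting square is Cartesian and pulls the splitting back correctly. (The paper also first reduces to Noetherian $A$ by approximation, since the version of Nagata compactification it cites is for Noetherian bases.) Your approach would need to be replaced by some such compactification; the disjoint-union trick cannot produce a proper $f$.
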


\begin{proof}
The implication $(3) \Rightarrow (2)$ is clear. 
We next show $(2) \Rightarrow (1)$, for which 
we do not need $A$ to be a domain. Let $f: X \rightarrow \Spec(A)$ be a proper, surjective,
finitely presented morphism. 
Again, using the equivalence  $\Dscr(A) \xrightarrow{\sim} \Dscr_{\qc}(\Spec(A))$ given by 
\cite[\href{https://stacks.math.columbia.edu/tag/06Z0}{Tag 06Z0}]{stacks-project}, 
it suffices to show that the induced map 
$$f^\sharp: A \rightarrow \bR\Gamma(X,\cO_X)$$ 
splits in $\Dscr(A)$. Note that splitting of $f^\sharp$ is equivalent to surjectivity 
of the map of $A$-modules
\begin{equation}
\label{eq:surjection}
\Hom_{\Dscr(A)}(\bR\Gamma(X,\cO_X),A) \xrightarrow{\_ \circ f^\sharp} \Hom_{\Dscr(A)}(A,A).
\end{equation}
 
By Fujiwara--Kato's finiteness result (Theorem \ref{thm:Fujiwara-Kato-finiteness}), 
we know that $\bR\Gamma(X,\cO_X)$
is a pseudo-coherent object of $\Dscr(A)$ since it has coherent cohomology
\cite[\href{https://stacks.math.columbia.edu/tag/0EWZ}{Tag 0EWZ}]{stacks-project}. Therefore,
by the fact that cohomology commutes with flat base change, we get
\begin{align*}
\Hom_{\Dscr(A)}(\bR\Gamma(X,\cO_X), A) \otimes_A A_\fp & \simeq 
H^0\big{(}\bR\Hom_A(\bR\Gamma(X,\cO_X), A)\otimes_A A_\fp\big{)} \\
& \simeq H^0\big{(}\bR\Hom_{A_\fp}(\bR\Gamma(X,\cO_X)\otimes_A A_\fp, A_\fp)\big{)}\\
& \simeq \Hom_{\Dscr(A_\fp)}(\bR\Gamma(X,\cO_X)\otimes_A A_\fp, A_\fp),
\end{align*}
where the second isomorphism follows by 
\cite[\href{https://stacks.math.columbia.edu/tag/0A6A}{Tag 0A6A}(3)]{stacks-project} using
pseudo-coherence of $\bR\Gamma(X,\cO_X)$. 
Now by Theorem \ref{thm:derived-pushforward-fpqc-pullback}(b) and again by flat base
change, we have 
$$\bR\Gamma(X,\cO_X)\otimes_{A} A_\fp \simeq \bR\Gamma(X_{A_\fp}, \cO_{X_{A_\fp}}),$$
in $\Dscr(A_\fp)$, where $X_{A_\fp} \coloneqq X \times_{\Spec(A)} \Spec(A_\fp)$. 
The upshot is that
$$\Hom_{\Dscr(A)}(\bR\Gamma(X,\cO_X), A) \otimes_A A_\fp \simeq 
\Hom_{\Dscr(A_\fp)}(\bR\Gamma(X_{A_\fp},\cO_{X_{A_\fp}}), A_\fp)$$
as $A_\fp$-modules, and similarly, $\Hom_{\Dscr(A)}(A,A) \otimes_A A_\fp \simeq 
\Hom_{\Dscr(A_\fp)}(A_\fp,A_\fp)$. Since surjectivity of the $A$-linear map in
(\ref{eq:surjection}) can be  checked after localizing at each prime ideal $\fp$ \cite[Prop. 3.9]{AM}, 
the desired result follows because $A_\fp \rightarrow \bR\Gamma(X_{A_\fp},\cO_{X_{A_\fp}})$ 
splits for each $\fp$.

It remains to prove $(1) \Rightarrow (3)$. For this implication we do not need $A$
to be universally cohesive. Let $\varphi: X \rightarrow \Spec(S^{-1}A)$ be
a proper, surjective, finitely presented morphism. Note that $S^{-1}A$ is the 
filtered colimit of the system of principal localizations $\{A_f: f \in S\}$
with flat transition maps 
\cite[\href{https://stacks.math.columbia.edu/tag/00CR}{Tag 00CR}]{stacks-project}.
By \cite[\href{https://stacks.math.columbia.edu/tag/01ZM}{Tag 01ZM}]{stacks-project},
there exists $f_0 \in S$ and morphism of finite presentation 
$$\varphi_0: X_0 \rightarrow \Spec(A_{f_0})$$
such that $X = X_0 \times_{A_{f_0}} S^{-1}A$. After replacing $f_0$ 
by a further multiple, we may assume by 
\cite[\href{https://stacks.math.columbia.edu/tag/07RR}{Tags 07RR}, 
\href{https://stacks.math.columbia.edu/tag/081F}{081F}]{stacks-project}
that $\varphi_0: X_{f_0} \rightarrow \Spec(A_{f_0})$ is also proper and surjective.
If $\cO_{\Spec(A_{f_0})} \rightarrow \bR \varphi_{0,*} \cO_{X_0}$ splits,
then by flat pullback 
$\cO_{\Spec(S^{-1}A)} \rightarrow \bR\varphi_*\cO_X$ splits as well. Thus, 
we may assume $S^{-1}A$ is a principal localization $A_f$. 

It suffices to  
show that there exists a proper, finitely presented morphism 
$\varphi': X' \rightarrow \Spec(A)$
and a pullback square
\begin{equation}
\label{diag:pullback-sq}
\xymatrix{
        X\ar[r]^{i'}\ar[d]^{\varphi}& X'\ar[d]^{\varphi'}\\
        \Spec(A_f)\ar[r]^{i}& \Spec(A).
    }
\end{equation}
Indeed, the generic point of $\Spec(A)$
must then be in the closed image of $\varphi'$, which implies $\varphi'$ is surjective. 
Since $A$ is a derived splinter by the hypothesis of (1),
$\cO_{\Spec(A)} \rightarrow \bR(\varphi')_*\cO_{X'}$ splits,
inducing a splitting of $\cO_{\Spec(A_f)} \rightarrow \bR\varphi_*\cO_X$
by flat pullback along $i$.
To show the existence of (\ref{diag:pullback-sq}), 
a short argument using Noetherian approximation on $A$ and by descent of
properness, finite presentation and surjectivity along filtered limits of 
schemes, we may further
assume that $A$ is Noetherian. Nagata compactification 
applied to the separated, finite type morphism 
$X \xrightarrow{\varphi} \Spec(A_f) \xrightarrow{i} \Spec(A)$
\cite[\href{https://stacks.math.columbia.edu/tag/0F41}{Tag 0451}]{stacks-project} 
gives a commutative diagram
\begin{equation}\label{diag:Nag-compact}
\xymatrix{
        X\ar[r]^{i'}\ar[d]^{\varphi}& \overline{X}\ar[d]^{\varphi'}\\
        \Spec(A_f)\ar[r]^{i}& \Spec(A)
    }
\end{equation}
where $i'$ is an open immersion and $\varphi': \overline{X} \rightarrow \Spec(A)$
is a proper morphism.
Furthermore, replacing $\overline{X}$ by the scheme theoretic image of $X$ in $\overline{X}$,
we may assume that $i'(X)$ is dense in $\overline{X}$
\cite[\href{https://stacks.math.columbia.edu/tag/01RG}{Tag 01RG}]{stacks-project}. 
This implies that 
(\ref{diag:Nag-compact}) is already a pullback square. 
To see this, let $W \coloneqq (\varphi')^{-1}(\Spec(A_f))$
and consider the open immersion $i': X \rightarrow W$ (by shrinking $\overline{X}$). Then 
$X \xrightarrow{i'} W \rightarrow \Spec(A_f)$ is proper because
it equals $\varphi$
and $W \rightarrow \Spec(A_f)$ is proper by base change. 
Thus, $X \xrightarrow{i'} W$ is proper 
\cite[\href{https://stacks.math.columbia.edu/tag/01W6}{Tag 01W6}]{stacks-project}.
In particular, $i'(X)$ is closed and dense in 
$W$, 
and so, $i'(X) = W$.
But this precisely means that $\varphi$ is the pullback of $\varphi'$ by $i$.
\end{proof}

\section{Some finiteness properties of valuation rings}

\subsection{Coherent modules over valuation rings}
Although valuation rings are typically highly non-Noetherian, they exhibit behavior that often characterizes the class of regular local rings in the Noetherian world. For example, as a consequence of a result of Osofsky \cite{Oso67} it follows that any valuation ring of finite Krull dimension has global dimension $\leq 2$ and the global dimension is $\leq 1$ precisely when the valuation ring is Noetherian (see also \cite{Dat17} for some applications).

Our principal source of finiteness results for modules over valuation rings is the following well-known fact which roughly says that valuation rings often behave like principal ideal domains.

\begin{lemma}
\label{lem:fin-gen-torsion free}
Any finitely generated torsion free module over a valuation ring is free. Consequently, a module over a valuation ring is flat if and only if it is torsion free.
\end{lemma}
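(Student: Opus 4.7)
My plan is to prove the first assertion by a minimal-generators argument that exploits the defining property of a valuation ring (total ordering by divisibility), and then deduce the flatness characterization via Lazard's theorem.

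First, let $V$ be a valuation ring with fraction field $K$, and let $M$ be a finitely generated torsion-free $V$-module. The plan is to pick a minimal generating set $\{m_1,\dots,m_n\}$ of $M$ (possible since $M$ is finitely generated) and show that this set is automatically $V$-linearly independent, hence a basis. Suppose for contradiction that there is a nontrivial relation $a_1 m_1 + \cdots + a_n m_n = 0$ with some $a_i \neq 0$. Here the key input is the fundamental property of a valuation ring: among any finite collection of elements $a_1,\dots,a_n$ of $V$, there is one (say $a_j$) that divides all the others, because for any two elements $a,b\in V$ with $b\neq 0$ either $a/b\in V$ or $b/a\in V$. Writing $a_i = a_j b_i$ with $b_j = 1$, torsion-freeness of $M$ together with $a_j \neq 0$ forces $\sum_i b_i m_i = 0$, which expresses $m_j$ as a $V$-linear combination of the other generators, contradicting minimality.

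For the second assertion, one direction is general: for any domain $V$, a flat $V$-module is torsion-free, because multiplication by a nonzero element is injective on $V$ and hence on any flat module. Conversely, given a torsion-free $V$-module $N$, I would write $N = \colim_{i} N_i$ as the filtered colimit of its finitely generated submodules. Each $N_i$ is torsion-free as a submodule of a torsion-free module, hence free by the first part. Since filtered colimits of flat (in fact free) modules are flat, $N$ is flat.

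The only mildly subtle point in the argument is ensuring that a minimal generating set of a finitely generated module exists and has the expected property; this follows from the standard observation that starting with any finite generating set and discarding redundant elements terminates in finitely many steps. Once the valuation-ring divisibility trick is in hand, neither step presents any real obstacle.
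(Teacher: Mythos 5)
Your proof is correct and is essentially the same argument the paper has in mind: the paper defers the freeness of finitely generated torsion-free modules to Bourbaki via a ``Nakayama lemma type argument,'' which is precisely your minimal-generating-set argument combined with the total ordering of $V$ by divisibility to show linear independence. Your deduction of the flatness characterization --- writing a torsion-free module as a filtered colimit of its finitely generated (hence free) submodules --- is word-for-word the paper's argument.
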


\begin{proof}
That finitely generated torsion free modules over a valuation ring are free can
    be proved via a Nakayama lemma type argument and can be found in
    \cite[Chap. VI, $\mathsection 3.6$, Lem. 1]{Bou89}. Flat modules over
    valuation rings are torsion free because valuation rings are domains.
    Conversely, any torsion free module over a valuation ring is a filtered
    colimit of its finitely generated submodules, which are all free by the
    first part of this lemma. Now, any torsion free module is flat since a filtered colimit of flat modules is flat.
\end{proof}

As a corollary we obtain an upper bound on the projective dimension of finitely
presented modules over a valuation ring, which in turn implies that bounded
complexes with coherent cohomology decompose as a direct sum of their
cohomology modules.

\begin{corollary}
\label{cor:fp-modules-pd-1}
Let $M$ be a finitely presented (equivalently, coherent) module over a valuation ring $V$. Then the projective dimension of $M$ is $\leq 1$. In particular, 
\begin{enumerate}
    \item[{\rm (1)}] every object of $\Dscr^b_{\coh}(\Spec(V))$ is a perfect
        complex and
    \item[{\rm (2)}] if $\cF \in \Dscr^b_{\coh}(\Spec(V))$, then $F \simeq \bigoplus_{i} H^i(\cF)[-i]$.
\end{enumerate}
\end{corollary}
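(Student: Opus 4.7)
The plan is to establish the projective dimension bound first, then deduce (1) and (2) by induction on the cohomological amplitude of $\cF$.

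For the bound $\mathrm{pd}_V(M)\le 1$, the idea is to write down a short free resolution. Since $M$ is finitely presented, there is a presentation $V^m\xrightarrow{\alpha} V^n\to M\to 0$. Let $K=\ker(V^n\to M)$, which is the image of $\alpha$ and is therefore finitely generated. Being a submodule of a free module over the domain $V$, the module $K$ is finitely generated and torsion free, hence free by Lemma~\ref{lem:fin-gen-torsion free}. This produces a length-one free resolution $0\to V^r\to V^n\to M\to 0$, so $\mathrm{pd}_V(M)\le 1$.

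For (1), I would argue by induction on the number of nonzero cohomology modules of $\cF\in\Dscr^b_\coh(\Spec V)$. Under the equivalence $\Dscr^b_\coh(\Spec V)\simeq\Dscr^b_\coh(V)$, the bound above shows that any single coherent module $H^n(\cF)$, shifted to $H^n(\cF)[-n]$, is perfect, since its two-term free resolution witnesses perfectness. For a general $\cF$, take the truncation triangle
\[
\tau_{\le n-1}\cF\longrightarrow \cF\longrightarrow H^n(\cF)[-n]\longrightarrow \tau_{\le n-1}\cF[1],
\]
where $n$ is the top nonvanishing degree. By induction $\tau_{\le n-1}\cF$ is perfect, and the third term is perfect by the previous sentence, so $\cF$ is an extension of perfect complexes and is therefore perfect.

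For (2), I would use the same truncation triangle but now show that it is split. The obstruction is the boundary morphism $\eta\in\Hom_{\Dscr(V)}(H^n(\cF)[-n],\tau_{\le n-1}\cF[1])$. Using the analogous truncation triangles on $\tau_{\le n-1}\cF$, it suffices to show
\[
\Hom_{\Dscr(V)}(H^n(\cF)[-n],H^k(\cF)[-k+1])=\Ext_V^{n+1-k}(H^n(\cF),H^k(\cF))=0
\]
for all $k\le n-1$. The integer $n+1-k$ is at least $2$, and since $\mathrm{pd}_V(H^n(\cF))\le 1$ by the first part of the corollary, all $\Ext^{\ge 2}$ groups vanish. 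Hence $\eta=0$ and the triangle splits, giving $\cF\simeq \tau_{\le n-1}\cF\oplus H^n(\cF)[-n]$. Induction on amplitude then yields the decomposition $\cF\simeq\bigoplus_i H^i(\cF)[-i]$. The only potentially subtle point is the vanishing argument for the obstruction class; everything else is a direct consequence of Lemma~\ref{lem:fin-gen-torsion free} together with standard truncation machinery.
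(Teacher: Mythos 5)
Your proof is correct and follows essentially the same route as the paper's: both establish the projective dimension bound by taking a finite free presentation, observing that the kernel is finitely generated and torsion free hence free by Lemma \ref{lem:fin-gen-torsion free}, and then deduce (1) by showing $\Dscr^b_\coh(\Spec V)$ is built from perfect shifts of cohomology modules and (2) by splitting the truncation/Postnikov triangles via the vanishing of $\Ext^{\ge 2}$ between finitely presented modules. The only cosmetic difference is that you spell out the dévissage for the obstruction class in (2) explicitly, while the paper delegates it to an adaptation of a Stacks Project argument; the content is the same.
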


\begin{proof}
    Choose a surjective $R$-linear map $\varphi: R^{\oplus n} \twoheadrightarrow M$
    for some $n > 0$. Then $\ker(\varphi)$ is a finitely generated
    $R$-submodule of $R^{\oplus n}$ because $M$ is finitely presented. However,
    $\ker(\varphi)$ is also torsion free since it is a submodule of a
    torsion free module. Therefore $\ker(\varphi)$ is a free $R$-module by
    Lemma \ref{lem:fin-gen-torsion free}, and consequently, $M$ has a free
    resolution of length $\leq 1$, proving the desired bound on its projective
    dimension. In particular, $M$ is a perfect $R$-module
    \cite[\href{https://stacks.math.columbia.edu/tag/066Q}{Tag
    066Q}]{stacks-project}.

    Now, if $F \in \Dscr^b_{\coh}(\Spec(V))$, then all its cohomology modules
    $H^i(F)$ are finitely presented (Lemma
    \ref{lem:coherent-modules-cohesive-schemes}), so they are sheaves associated to
    perfect modules by what we proved in the previous paragraph. Then $F$ is a
    perfect complex since it has a finite filtration with perfect graded
    pieces; see
    \cite[\href{https://stacks.math.columbia.edu/tag/08EB}{Tag
    08EB}]{stacks-project} and
    \cite[\href{https://stacks.math.columbia.edu/tag/066U}{Tag
    066U}]{stacks-project}. This proves (1).

    Part (2) follows by a straightforward argument by splitting the Postnikov
    tower of $\Fscr$. One can for instance adapt the proof of \cite[\href{https://stacks.math.columbia.edu/tag/0EWX}{Tag 0EWX}]{stacks-project} because for any $i, j$, 
    $$\Ext^2_{\Spec(V)}(H^i(\cF), H^j(\cF)) = 0.$$ 
    The vanishing of the $\Ext^2$ modules follows by the equivalence $\Dscr_\qc(\Spec(V)) \simeq \Dscr(V)$ 
    and the fact that the cohomology modules are sheaves associated to finitely presented $V$-modules which have projective dimension at most $1$ by the earlier part of the corollary. 
    \end{proof}

\subsection{A finiteness result of Nagata}

Since valuation rings are not necessarily Noetherian, finitely generated
algebras over such rings are not necessarily finite presented. However, Nagata
showed the surprising fact that flatness along with finite generation implies
finite presentation. Even more surprisingly, this was later generalized to 
finitely generated flat algebras over arbitrary domains in \cite{raynaud-gruson}
(see Remark \ref{rem:ray-gru-remarkable}).

\begin{theorem}
\label{thm:Nagata-amazing}
Let $V$ be a valuation ring and $B$ be a finitely generated $V$-algebra. 
\begin{enumerate}
    \item[{\em (1)}] If $B$ is a torsion free (equivalently, flat) $V$-module, then $B$ is a finitely presented $V$-algebra.
    \item[{\em (2)}] If $M$ is a finitely generated $B$-module that is torsion free as a $V$-module, then $M$ is a finitely presented $B$-module.
\end{enumerate}
\end{theorem}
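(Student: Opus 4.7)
The plan is to prove (1) first via a content-ideal argument, then deduce (2). Consider first (1) in the single-generator case, so $B = V[t]/J$. The $V$-torsion-freeness of $V[t]/J$ is equivalent to $J$ being $V$-saturated in $V[t]$, hence $J = V[t] \cap JK[t]$ where $K = \Frac(V)$. Since $K[t]$ is a PID, $JK[t] = (p)$ for some $p \in V[t]$, after clearing denominators. Because every finitely generated ideal of $V$ is principal, the \emph{content} $c(f) \subseteq V$ of $f \in V[t]$ (the principal ideal generated by the coefficients of $f$) is well-defined, and the identity $c(fg) = c(f) c(g)$ holds by a direct computation using the total order on the value group of $V$. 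Using $V$-saturation of $J$, we may divide $p$ by its content to assume $c(p) = V$. Then for $f \in J$, writing $f = gp$ in $K[t]$ and applying the content identity forces $c(g) = c(f) \subseteq V$, so $g \in V[t]$; hence $J = (p)V[t]$ is principal.

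For (1) in general, proceed by induction on the number of $V$-algebra generators of $B$; each inductive step extends the content argument to a base that is no longer a valuation ring but merely a finitely presented $V$-algebra, which is technically more delicate.

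For (2), use (1) to write $B = V[x_1,\ldots,x_n]/I$ with $I$ finitely generated. A finite presentation of $M$ as a $V[x_1,\ldots,x_n]$-module descends to one over $B$, since $M \otimes_{V[x_1,\ldots,x_n]} B = M$ as $M$ is already a $B$-module. So it suffices to handle $B = V[x_1,\ldots,x_n]$. Writing $M = V[x_1,\ldots,x_n]^r / N$, the submodule $N$ is necessarily $V$-saturated in $V[x_1,\ldots,x_n]^r$ by the $V$-torsion-freeness of $M$. The submodule $N \otimes_V K \subseteq K[x_1,\ldots,x_n]^r$ is finitely generated by Noetherianness of $K[x_1,\ldots,x_n]$. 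After lifting generators and clearing denominators, one obtains a finitely generated $V[x_1,\ldots,x_n]$-submodule $N' \subseteq N$ with $N' \otimes_V K = N \otimes_V K$; the main task is then to show $N' = N$ via a multivariate, higher-rank version of the content argument.

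The main obstacle is extending the single-variable Gauss's lemma argument both to multivariate polynomial rings with higher-rank modules in (2), and to the non-valuation base setting in the inductive step of (1). This is the technical core of Nagata's original proof; a conceptually different and substantially more general approach via flatification, applicable to finitely generated flat algebras over arbitrary domains, is given in \cite{raynaud-gruson}.
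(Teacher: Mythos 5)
The paper proves this result entirely by citation: part~(1) is attributed to Nagata's original theorem, and part~(2) to the Stacks Project (Tag~053E, ``by reducing to the graded case''), so there is no internal argument to compare against line by line; the relevant question is whether your sketch is sound on its own.

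Your single-generator, single-variable computation for~(1) is correct as far as it goes. The multiplicative content formula $c(fg) = c(f)c(g)$ does hold over a valuation ring (and extends to polynomials with coefficients in $K$ viewed as having fractional-ideal content), the normalization $c(p)=V$ is legitimate because finitely generated ideals of $V$ are principal, and then $J=(p)V[t]$ follows cleanly.

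However, the two remaining steps that you yourself flag as ``the main obstacle'' are genuine gaps, and as stated the plan for closing them would not succeed. For the inductive step in~(1), you would need a Gauss-type content identity relative to a base ring of the form $V[x_1,\dots,x_{n-1}]/I'$. But the multiplicative content formula characterizes Gaussian rings, and a Gaussian domain is exactly a Pr\"ufer domain; a finitely presented $V$-algebra of positive relative dimension is essentially never Pr\"ufer, so the ``same content argument over a smaller base'' does not get off the ground, and Nagata's actual proof is organized quite differently. For~(2), ``show $N'=N$'' cannot be the right target for the $N'$ obtained by lifting and clearing denominators: all one knows is that $N$ is the $V$-saturation of $N'$ inside $V[x_1,\dots,x_n]^r$, and the real task is to show that this saturation is again finitely generated --- a statement that is roughly equivalent to~(2) itself and requires an actual new idea, not a ``higher-rank version'' of the single-variable content computation. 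Since you defer both points to Nagata and to Raynaud--Gruson, the proposal is a correct warm-up calculation followed by a citation --- logically no stronger than what the paper does, but not an independent proof, and the suggested route for filling the gaps would need to be replaced rather than merely elaborated.
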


\begin{proof}
Part (1) is precisely \cite[Thm. 3]{nagata}, while part (2) follows from \cite[\href{https://stacks.math.columbia.edu/tag/053E}{Tag 053E}]{stacks-project} by reducing to the graded case.
\end{proof}

Thus, if $f: X \rightarrow \Spec(V)$ is flat and locally of finite type, where
$V$ is a valuation ring, then $f$ is automatically locally of finite
presentation. Theorem \ref{thm:Nagata-amazing} has many interesting
consequences. For example, it can be used to show that valuation rings are
universally cohesive.

\begin{corollary}
\label{cor:valuation-rings-universally-cohesive}
A valuation ring $V$ is universally cohesive, that is, if $X \rightarrow \Spec(V)$ is locally of finite presentation, then $\cO_X$ is coherent.

In particular, if $f: X \rightarrow \Spec(V)$ is a proper, finitely presented morphism, then for any $F \in \Dscr^b_{\coh}(X)$, $\bR f_* F$ is a perfect complex.
\end{corollary}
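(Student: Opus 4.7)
The plan is to reduce the universal cohesiveness of $V$ to coherence of polynomial rings via Lemma \ref{lem:polynomial-extensions}, then deduce the coherence of $V[x_1,\dots,x_n]$ from Nagata's finiteness result (Theorem \ref{thm:Nagata-amazing}).

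First, by Lemma \ref{lem:polynomial-extensions}, it suffices to show that for every $n \geq 0$, the polynomial ring $V[x_1,\dots,x_n]$ is a coherent ring, i.e., every finitely generated ideal $I \subseteq V[x_1,\dots,x_n]$ is finitely presented as a $V[x_1,\dots,x_n]$-module. The key observation is that $V[x_1,\dots,x_n]$ is a free $V$-module, hence torsion free over $V$, and therefore so is any submodule, in particular any finitely generated ideal $I$. Since $I$ is a finitely generated $V[x_1,\dots,x_n]$-module that is torsion free over $V$, and $V[x_1,\dots,x_n]$ is itself a finitely generated $V$-algebra, Theorem \ref{thm:Nagata-amazing}(2) applies and yields that $I$ is a finitely presented $V[x_1,\dots,x_n]$-module. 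This establishes coherence of $V[x_1,\dots,x_n]$ for all $n$, and hence universal cohesiveness of $V$. Consequently, if $X \to \Spec(V)$ is locally of finite presentation, $\cO_X$ is a coherent sheaf.

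For the ``In particular'' statement, let $f: X \to \Spec(V)$ be a proper, finitely presented morphism and let $F \in \Dscr^b_{\coh}(X)$. Since $\Spec(V)$ is universally cohesive and quasi-compact by the first part, Theorem \ref{thm:Fujiwara-Kato-finiteness} gives $\bR f_*F \in \Dscr^b_{\coh}(\Spec(V))$. Finally, Corollary \ref{cor:fp-modules-pd-1}(1) tells us that every object of $\Dscr^b_{\coh}(\Spec(V))$ is a perfect complex, so $\bR f_*F$ is perfect, as desired.

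The substantive input here is Nagata's theorem; once it is in hand, the remainder is a direct assembly of earlier results, with the only mild subtlety being the observation that finitely generated ideals of $V[x_1,\dots,x_n]$ are automatically torsion free over $V$, which is exactly the hypothesis needed to invoke Theorem \ref{thm:Nagata-amazing}(2).
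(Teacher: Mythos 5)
Your argument is correct and follows the same route as the paper's proof: reduce via Lemma \ref{lem:polynomial-extensions} to showing $V[x_1,\dots,x_n]$ is coherent, observe that finitely generated ideals are torsion free over $V$ so Theorem \ref{thm:Nagata-amazing}(2) gives finite presentation, and then combine Theorem \ref{thm:Fujiwara-Kato-finiteness} with Corollary \ref{cor:fp-modules-pd-1} for the second assertion. No gaps.
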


\begin{proof}
Since coherence of $\cO_X$ can be checked affine locally on $X$, we may assume
    that $X$ is affine. Thus, it suffices to show that any finitely presented
    algebra over $V$ is a coherent ring. By Lemma \ref{lem:polynomial-extensions}, 
    it suffices to show that a polynomial algebra
    over $V$ is coherent. So let $B = V[x_1,\dots,x_n]$ be a polynomial ring.
    Let $I$ be a finitely generated ideal of $B$. We need to show that $I$ is
    finitely presented as a $B$-module. But, $B$ is a torsion free $V$-module,
    hence so is $I$. Thus, $I$ is a finitely presented $B$-module by Theorem
    \ref{thm:Nagata-amazing}(2).

If $f: X \rightarrow \Spec(V)$ is proper and finitely presented, then for $F \in \Dscr^b_{\coh}(X)$, $\bR f_*F \in \Dscr^b_{\coh}(\Spec(V))$ by Fujiwara-Kato's finiteness result (Theorem \ref{thm:Fujiwara-Kato-finiteness}). But any element of $\Dscr^b_{\coh}(\Spec(V))$ is a perfect complex by Corollary \ref{cor:fp-modules-pd-1}.
\end{proof}

\begin{remark}
    A similar proof of coherence of polynomial algebras over valuation rings also
    appears in \cite[Thm. 7.3.3]{Gla89}. Note that Fujiwara and Kato only show
    universal cohesiveness of valuation rings which are $f$-adic (aka
    microbial) in the valuation topology and complete with respect to an
    element in the height $1$ prime \cite[Chap. 0, Cor. 9.2.8]{FK18}. As such,
    Fujiwara and Kato do not observe Corollary
    \ref{cor:valuation-rings-universally-cohesive} for arbitrary valuation
    rings. Matthew Morrow observed this corollary to the first author, but with
    a different proof involving Raynaud--Gruson's platification par
    \'eclatement~\cite{raynaud-gruson}. 
\end{remark}

Theorem \ref{thm:Nagata-amazing} allows us to replace a proper, 
surjective morphism over the spectrum of a valuation ring by a flat, 
surjective, proper and finitely presented morphism in our investigation 
of the derived splinter condition. This is highlighted in the next result.

\begin{corollary}
\label{cor:dominating-proper-by-flat}
Let $V$ be a valuation ring and $f: X \rightarrow \Spec(V)$ be locally of finite type such that $f$ is closed (as a map of topological spaces) and surjective. Then there exists a closed subscheme $X' \subseteq X$ such that the composition 
$$X' \hookrightarrow X \xrightarrow{f} \Spec(V)$$ 
is flat, surjective (i.e. faithfully flat) and locally of finite presentation.
    In particular, if $f$ is
\begin{enumerate}
    \item[{\em (1)}] quasi-compact,
    \item[{\em (2)}] quasi-separated,
    \item[{\em (3)}] separated,
    \item[{\em (4)}] universally closed, or
    \item[{\em (5)}] proper,
\end{enumerate} 
then so is $X' \rightarrow \Spec(V)$.
\end{corollary}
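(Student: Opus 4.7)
The plan is to construct $X'$ as the closed subscheme of $X$ cut out by the quasi-coherent ideal sheaf $\cI \subseteq \cO_X$ of $V$-torsion sections. On any affine open $U = \Spec(A) \subseteq X$, this ideal is
$$T(A) \coloneqq \{a \in A : va = 0 \text{ for some nonzero } v \in V\},$$
and a routine check shows that $T$ is compatible with principal localizations on $A$, so that the $T(A)$'s glue to a quasi-coherent ideal sheaf. I would set $X' \coloneqq V(\cI)$, so that on each affine piece one has scheme-theoretically $U \cap X' = \Spec(A/T(A))$.

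The next step is to verify flatness and local finite presentation of $X' \to \Spec(V)$. Each quotient $A/T(A)$ is $V$-torsion-free by construction, and hence $V$-flat by Lemma \ref{lem:fin-gen-torsion free}. Because $f$ is locally of finite type, $A/T(A)$ is a finitely generated flat $V$-algebra, and Nagata's finiteness result (Theorem \ref{thm:Nagata-amazing}(1)) then upgrades this to finite presentation. Thus $X' \to \Spec(V)$ is flat and locally of finite presentation.

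For surjectivity I would use the observation that $X'$ and $X$ agree generically: inverting any nonzero $v \in V$ kills $T(A)$, so $X'_K = X_K$ where $K = \Frac(V)$. Since $f$ is surjective, $X_K$ is nonempty, so the image of $X' \to \Spec(V)$ contains the generic point. Because $X' \hookrightarrow X$ is a closed immersion and $f$ is closed by hypothesis, the composite $X' \to \Spec(V)$ is closed as a map of topological spaces, so its image is a closed subset of $\Spec(V)$ containing the generic point. By irreducibility of $\Spec(V)$ this forces the image to be all of $\Spec(V)$. This surjectivity step is the main point where the closedness hypothesis on $f$ is essential; without it one would only know the image of $X' \to \Spec(V)$ is open (since it is flat and locally of finite presentation) and contains the generic point, which in general is insufficient.

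Finally, the list of properties (1)--(5) transfers automatically because each is stable under composition with a closed immersion, and closed immersions are themselves quasi-compact, quasi-separated, separated, universally closed, and of finite type. For properness in (5), one combines inherited separatedness and universal closedness with finite type, the last of which follows from local finite presentation established above together with quasi-compactness inherited from $X$.
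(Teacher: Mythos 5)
Your proposal is correct and constructs the same closed subscheme $X'$ as the paper, namely the one cut out by the $V$-torsion ideal sheaf, with flatness from torsion-freeness and finite presentation from Nagata. The only minor divergence is in the surjectivity step: you argue that $f'(X')$ is closed and contains the generic point, hence equals $\Spec(V)$ by irreducibility, whereas the paper notes that $f'$ is both open (flat and locally of finite presentation) and closed, so its nonempty image is a clopen subset of the connected space $\Spec(V)$; your variant is marginally leaner since it does not invoke openness.
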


\begin{proof}
Since $f: X \rightarrow \Spec(V)$ is surjective, the induced map on global sections
\[
f^{\sharp}: V \rightarrow \Gamma(X, \cO_X)
\]
is injective because $V$ is a domain and the image of $f$ is contained in
    $\bV(\ker(f^\sharp)) \subseteq \Spec(V)$. Let $\mathfrak{m}$ be the maximal
    ideal of $V$. For any non-zero $v \in \mathfrak{m}$, let
    $\cI_v \coloneqq \ker(\cO_X \xrightarrow{f^\sharp v \cdot} \cO_X)$. Then
    each $\cI_v$ is a proper ($\cI_v \neq \cO_X$ by injectivity of $f^\sharp$),
    quasi-coherent ideal of $X$. Define
\[
    \cI \coloneqq \sum_{v \in \mathfrak{m} - \{0\}} \cI_v,
\]
which is quasi-coherent by \cite[Cor. 7.19(3)]{GW10}.  
Note that if $\Spec(B)$ is an 
affine open subscheme of $X$, then $\cI(\Spec(B))$ is precisely the $V$-torsion 
submodule of $B$ (that also happens to be an ideal of $B$). 
The surjectivity of $f$ implies $\cI \neq \cO_X$, since any affine open neighborhood 
of a point
of $X$ lying above the generic point of $\Spec(V)$ cannot be $V$-torsion. 
Let $X'$ be the closed 
subscheme of $X$ defined by  $\cI$. Then by construction, the affine open subschemes 
of $X'$ are torsion free, hence flat over $V$ (Lemma \ref{lem:fin-gen-torsion free}). 
In particular, the composition 
$$f' \coloneqq X' \subseteq X \xrightarrow{f} \Spec(V)$$ 
is flat and locally of finite type, therefore also locally of finite presentation 
by Nagata's result (Theorem \ref{thm:Nagata-amazing}). Thus, $f'$ is an open map
\cite[\href{https://stacks.math.columbia.edu/tag/01UA}{Tag 01UA}]{stacks-project}
as well as a closed map (being a composition of two closed maps). Then
$f'(X')$ is a non-empty clopen subset of the connected space $\Spec(V)$, and so,
must equal all of $\Spec(V)$. That is, $f'$ is surjective.

If $f$ satisfies any of the properties (1)-(5), then so does $f'$ because closed 
embeddings satisfy (1)-(5), and these properties are preserved under composition.
\end{proof}

\section{Some classes of ind-regular valuation rings}

Classical local uniformization in characteristic $0$ implies that
characteristic $0$ valuation rings are filtered colimits of smooth
$\QQ$-subalgebras, that is, they are ind-smooth. We will show how work of de
Jong and Temkin implies similar results for two important classes of valuation
rings in prime and mixed characteristics. In fact, we prove that in these cases the
valuation rings in question are filtered colimits of their finitely generated (over
$\ZZ_{(p)}$) regular subalgebras.

\subsection{Perfect valuation rings are ind-smooth}

We first give a proof of the fact that every perfect valuation ring of
characteristic $p$ is ind-smooth, that is, a filtered colimit of smooth
$\FF_p$-algebras. This is a consequence of Temkin's inseparable local uniformization
theorem.

\begin{proposition}\label{prop:indsmooth}
    Every perfect valuation ring of characteristic $p > 0$ is
    a directed colimit of its smooth $\FF_p$-subalgebras.
\end{proposition}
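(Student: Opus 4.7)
The plan is to show that every finitely generated $\FF_p$-subalgebra $R \subseteq V$ is contained in a smooth $\FF_p$-subalgebra of $V$. Since $V$ is the filtered union of its finitely generated $\FF_p$-subalgebras, this will immediately imply that the smooth $\FF_p$-subalgebras of $V$ form a cofinal directed system whose colimit is $V$, as desired.

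Fix such an $R$ and let $K \coloneqq \Frac(V)$, which is perfect since $V$ is. Set $K_R \coloneqq \Frac(R) \subseteq K$ and let $v_R$ denote the restriction of the valuation to $K_R$; its valuation ring $V \cap K_R$ contains $R$, so $v_R$ has a center on $\Spec R$. I would then apply Temkin's inseparable local uniformization to $v_R$ on the integral $\FF_p$-variety $\Spec R$: this produces an alteration $\pi \colon X' \to \Spec R$ with purely inseparable function field extension $k(X')/K_R$, together with a point $x' \in X'$ that is the center of the unique extension $v'$ of $v_R$ to $k(X')$ and that is a smooth point of $X'$ over $\FF_p$.

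Next, perfection of $K$ provides a unique embedding $\iota \colon k(X') \hookrightarrow K$ extending $K_R \hookrightarrow K$, and under $\iota$ the valuation ring of $v'$ is identified with $V \cap k(X')$. I would then choose an affine open $\Spec B \subseteq X'$ containing $x'$; this gives $B \subseteq \mathcal{O}_{X',x'} \subseteq V \cap k(X') \subseteq V$, with the structure map $R \to B$ agreeing with the inclusion $R \hookrightarrow V$ under these identifications, so that $R \subseteq B \subseteq V$ as subrings of $V$. Because $x'$ is a smooth point of $X'$ over the perfect field $\FF_p$ (so that regular equals smooth) and smoothness is an open condition on the source of a finitely presented morphism, shrinking $\Spec B$ around $x'$ yields the required smooth $\FF_p$-subalgebra of $V$ containing $R$.

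The hard part is of course the invocation of Temkin's inseparable local uniformization, which does all the geometric work of producing a smooth center after a purely inseparable modification. The rest is bookkeeping, with perfection of $V$ playing the essential role of allowing every purely inseparable extension of $K_R$ inside an algebraic closure of $K$ to be realized inside $K$ itself, so that the smooth model $\Spec B$ can be embedded as a subring of $V$.
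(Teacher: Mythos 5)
Your argument is correct and essentially identical to the paper's: both reduce to showing every finitely generated $\FF_p$-subalgebra factors through a smooth one, both invoke Temkin's inseparable local uniformization to get a purely inseparable alteration with a smooth center of the valuation, and both use perfection of $\Frac(V)$ to embed that inseparable extension (and hence the smooth model) as a subring of $V$. The only cosmetic difference is that the paper phrases Temkin's output as a normalization of a model $\Spec S$ with $R\subseteq S\subseteq M^\circ$ rather than as a general alteration $X'\to\Spec R$, but the chain of inclusions $R\hookrightarrow T\hookrightarrow V$ you extract is the same.
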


\begin{proof}
    Let $K^\circ$ be a perfect valuation ring of characteristic $p>0$ 
    with field of fractions $K$. Note that $K$ is a perfect field. 
    We want to prove
    that the category of smooth
    $\FF_p$-subalgebras of $K^\circ$ with inclusions for transition maps
    is filtering with $K^\circ$ as colimit. We know that the
    category of finitely generated
    $\FF_p$-subalgebras of $K^\circ$ with inclusions for transition maps
    is filtering and $K^\circ$ is the filtered colimit of this system. 
    Thus, it is enough to check that if $R$ is a finitely
    generated $\FF_p$-subalgebra of $K^\circ$, then there is a
    factorization $R\hookrightarrow T\hookrightarrow K^\circ$ where $T$ is a smooth
    $\FF_p$-algebra; see for example~\cite[\href{https://stacks.math.columbia.edu/tag/0BUC}{Tag~0BUC}]{stacks-project}.
    Note that $R$ is a domain since $K^\circ$ is. Let $M$ denote the fraction
    field of $R$ and $M^\circ = M \cap K^\circ$ be the restriction of $K^\circ$ to $M$. 
    Since $M$ is a finitely generated field extension of $\FF_p$,
    by Temkin's inseparable local uniformization ~\cite[Thm.~1.3.2]{temkin-inseparable},
    there is a finite purely inseparable extension $L$ of $M$ and a
    subextension $R\subseteq S\subseteq M^\circ$ with $S$ a finitely generated
    $\FF_p$-algebra and such that if $L^\circ$ is the unique extension of
    $M^\circ$ to $L$, then $L^\circ$ is centered on a $\FF_p$-smooth point of
    the normalization of $S$ in $L$. In particular, there is an
    extension $S\rightarrow T$ such that $T$ is $\FF_p$-smooth with fraction field $L$
    and we have a
    commutative diagram 
    $$\xymatrix{
        R\ar@{^{(}->}[r]\ar@{^{(}->}[d]&M^\circ\ar@{^{(}->}[d]\ar[r]& K^\circ\\
        T\ar@{^{(}->}[r]&L^\circ.\ar@{.>}[ur]&
    }$$ 
    To see the existence of the dotted arrow, first note that $L$, 
    being a purely inseparable extension of $M$, 
    embeds in the perfect field $K$. Since 
    $L^\circ$ and $L \cap K^\circ$ are both valuation rings of $L$ dominating $M^\circ$, 
    we then get $L^\circ = L \cap K^\circ$ by uniqueness of the extension of
    $M^\circ$ to $L$. Thus, $K^\circ$ dominates $L^\circ$, and consequently, we have
    inclusions $R \hookrightarrow T \hookrightarrow K^\circ$ where $T$ is $\FF_p$-smooth.
\end{proof}

\subsection{Valuation rings of $p$-separably closed fields are ind-regular}

Fix a prime $p > 0$.
We will say that a field $K$ is \emph{$p$-separably closed}
if for each $n>0$, every finite \'etale $K$-algebra $L$ of 
degree\footnote{The \emph{degree} of a finite algebra over a field is its vector
space dimension over the field.} $p^n$ has a
section $L\rightarrow K$ in the category of $K$-algebras (that is, a $K$-algebra map such that the composition
$K\rightarrow L\rightarrow K$ is the identity of $K$).\footnote{This is not
standard terminology, but it is the notion we need. More common, for
example as in~\cite[Sec. VI.1]{NSW}, is the notion
of $p$-closed field
where it is required that $K$ have no Galois extensions of degree $p$ (or
equivalently of degree $p^n$ for $n>0$). A $p$-separably closed field is
$p$-closed since non-trivial field extensions do not admit sections, but the
converse is not generally true. For example, let $\FF_\ell$ be a finite field
and fix two distinct primes $q_1$ and $q_2$ which are both different from
$p$. We let $G\iso\prod_{r}\ZZ_r$ be the absolute Galois group of $\FF_\ell$, where the product ranges over all primes
$r$, and we let $G'\subseteq
G$ be the subgroup $\prod_{r\neq q_1,q_2}\ZZ_r$. Set $K=\overline{\FF}_\ell^{G'}$.
Then, $K$ is evidently $p$-closed since its Galois group is
$\ZZ_{q_1}\times\ZZ_{q_2}$, but it is not $p$-separably closed since we can
solve the equation $mq_1+nq_2=p^a$ for some $m,n,a>0$.}

For example, any separably closed field is $p$-separably closed for
any prime $p$. The field
$\RR$ is $p$-separably closed for any odd prime $p$. Given any field $k$ with
separable Galois group $G$, fix a prime $\ell\neq p$ and consider an
$\ell$-Sylow subgroup $G_\ell\subseteq G$ corresponding to a maximal
prime-to-$\ell$-extension $k\rightarrow K$. See~\cite[Sec. I.1.4]{serre-galois} for the
fact that such $K$ exist. The field $K$ is $p$-separably
closed. Indeed, any separable extension of $K$ splits as a product of
$\ell$-power degree field extensions of $K$. The only way the sum of these
degrees can be $p^n$ is if at least one has degree $1$.

The next result was communicated to us by Elden Elmanto and Marc Hoyois.

\begin{proposition}\label{prop:indregular}
    Let $K$ be a $p$-separably closed field. If $K^\circ$ is a valuation ring of $K$
    of residue characteristic $p$, then $K^\circ$ is a directed colimit of its regular
    finitely generated $\ZZ_{(p)}$-subalgebras.
\end{proposition}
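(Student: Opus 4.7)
The plan is to adapt the argument of Proposition~\ref{prop:indsmooth} by replacing Temkin's inseparable local uniformization with his $p$-alteration theorem~\cite{temkin-tame}, and then using the $p$-separable closedness of $K$ to lift the resulting generically \'etale $p$-power cover back into $K$. Since $K^\circ$ has residue characteristic $p$, we have $\ZZ_{(p)} \subseteq K^\circ$ in characteristic $0$, and $\ZZ_{(p)}$ maps onto $\FF_p \subseteq K^\circ$ in equal characteristic $p$; in either case $K^\circ$ is the filtered union of its finitely generated $\ZZ_{(p)}$-subalgebras. By~\cite[\href{https://stacks.math.columbia.edu/tag/0BUC}{Tag~0BUC}]{stacks-project} it therefore suffices to show that every such $R \subseteq K^\circ$ fits into a chain $R \hookrightarrow T \hookrightarrow K^\circ$ with $T$ regular and finitely generated over $\ZZ_{(p)}$. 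Set $M := \Frac(R)$ and $M^\circ := M \cap K^\circ$, the induced valuation ring of $M$.

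Apply Temkin's $p$-alteration theorem~\cite{temkin-tame} to $\Spec R$: one obtains a proper surjection $\pi : Y \to \Spec R$ with $Y$ regular and finitely presented over $\ZZ_{(p)}$, whose generic fiber is $\Spec L$ for some finite separable extension $L/M$ of $p$-power degree $p^n$. Since $\pi$ is proper, every extension of $M^\circ$ to $L$ is centered on a (necessarily regular) point of $Y$; fix such an extension $L^\circ$ and a center $y \in Y$. Because $L/M$ is finite \'etale of degree $p^n$, the $K$-algebra $L \otimes_M K$ is finite \'etale of degree $p^n$; by $p$-separable closedness each simple factor must be isomorphic to $K$, so $L \otimes_M K \cong K^{p^n}$. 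Projection onto each factor gives an $M$-algebra embedding $L \hookrightarrow K$, and every extension of $M^\circ$ to $L$ arises as $\sigma^{-1}(K^\circ)$ for some such embedding $\sigma$. Choosing $\sigma$ with $\sigma^{-1}(K^\circ) = L^\circ$ and an affine open neighborhood $\Spec T \subseteq Y$ of $y$ yields the desired chain $R \hookrightarrow T \hookrightarrow L^\circ \hookrightarrow K^\circ$.

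The main obstacle is obtaining a \emph{generically \'etale} $p$-alteration of $\Spec R$ in equal characteristic $p$. In mixed or residue characteristic $0$, separability is automatic and the result reduces to Temkin's refinement of de Jong's alterations. In equal characteristic $p$, one needs the ``tame distillation'' aspect of~\cite{temkin-tame} to distill the $p$-power cover into a generically \'etale one; without this, a residual purely inseparable extension could fail to embed into the possibly non-perfect field $K$. Once separability is secured, the classical description of extensions of valuations in separable extensions lets us pick the embedding into $K$ compatible with the chosen $L^\circ$, and the rest of the argument is formal.
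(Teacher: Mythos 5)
Your overall strategy matches the paper's: apply Temkin's $p$-alteration theorem to $\Spec(R)$ to get a generically \'etale, $p$-power degree, projective alteration with regular source, use $p$-separable closedness of $K$ to embed the function field $L$ into $K$, and take an affine open around a center to get the intermediate regular $T$. However, there is a real gap in the way you carry out the middle step. You assert that ``by $p$-separable closedness each simple factor \emph{must} be isomorphic to $K$, so $L \otimes_M K \cong K^{p^n}$.'' This does not follow from the definition: $p$-separable closedness only guarantees a \emph{single} $K$-algebra section of $K \to L\otimes_M K$, i.e., that at least one simple factor of the degree-$p^n$ \'etale algebra is $K$. The remaining factors need not have $p$-power degree over $K$ (the degrees only have to sum to $p^n$), so they can be nontrivial field extensions; the footnote in the paper constructs precisely such a $p$-separably closed $K$ whose absolute Galois group is $\ZZ_{q_1}\times\ZZ_{q_2}$, and for such $K$ one can have $L\otimes_M K$ with, say, a factor of degree $q_1$. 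Your subsequent assertion that every extension of $M^\circ$ to $L$ arises as $\sigma^{-1}(K^\circ)$ for some $M$-embedding $\sigma:L\hookrightarrow K$ is also unjustified — extensions of $M^\circ$ to $L$ are indexed by Galois orbits of embeddings into a completed algebraic closure of $M$, and there is no reason that a particular $L^\circ$ is cut out by an embedding into $K$ with the specific $K^\circ$.

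Both problems stem from fixing the extension $L^\circ$ (and its center $y$) \emph{before} choosing the embedding, and then trying to match $\sigma$ to $L^\circ$. The right order — and what the paper does — is to first take any one section $\sigma: L\to K$ (which is exactly what $p$-separable closedness supplies, no more is needed), producing a map $\Spec K \to X'$ lying over $\Spec K^\circ \to \Spec R$, and then invoke the valuative criterion of properness for the projective alteration $X'\to\Spec R$ to extend this to $\Spec K^\circ \to X'$. The image of the closed point is automatically a regular point since $X'$ is regular everywhere, and an affine open $\Spec T$ around it gives $R\hookrightarrow T\hookrightarrow K^\circ$. With this reordering, your argument becomes correct and is essentially the paper's proof; in particular you never need to know anything about the other factors of $L\otimes_M K$ or about which extensions of $M^\circ$ are realized.
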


The main ingredient in the proof of Proposition \ref{prop:indregular} is the following
strengthening of de Jong's work on alterations \cite[Thm. 4.1]{AJ96} due to
Temkin~\cite[Thm. 1.2.5]{temkin-tame}; see also~\cite{illusie-temkin}.

\begin{theorem}[\cite{temkin-tame}]\label{thm:Temkin-alteration}
Let $X$ be an integral scheme with a nowhere dense closed subset Z,
and assume $X$ admits a morphism of finite type to a quasi-excellent
scheme $S$ with $\dim(S) \leq 3$. Then there is a projective alteration 
$$b: X' \rightarrow X$$
with regular integral source such that $b^{-1}(Z)$ is a snc divisor and the
degree of the extension of function fields $[k(X'):k(X)]$ has prime factors among
the set of primes which are the characteristics of the residue fields of $X$.
Moreover, if $S = \Spec(k)$ for a perfect field $k$, then the alteration
may be chosen to be generically \'etale.
\end{theorem}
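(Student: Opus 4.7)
The plan is to adapt de Jong's construction of projective alterations from~\cite{AJ96}, refined so as to control the prime factors appearing in the degree of the alteration. I would induct on $\dim X$. The base case handles the situation when $X \to S$ is quasi-finite, so that $\dim X \le \dim S \le 3$; here, regular alteration (indeed, resolution of singularities) is available in dimension at most $3$ through the work of Abhyankar--Lipman for excellent surfaces and Cossart--Piltant for arithmetic threefolds, producing a projective, generically \'etale \emph{modification}. In particular, the degree of the alteration can be taken to be $1$, which trivially satisfies the prime-factorization constraint.

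For the inductive step, I would use de Jong's multipointed projection trick to produce, after a preliminary modification of $X$, a fibration $\pi': X \to Y$ whose generic fiber is a smooth projective curve equipped with sufficiently many disjoint sections, and with $\dim Y < \dim X$. Apply the inductive hypothesis to $Y$, taking as nowhere dense closed subset the union of the image of the singular locus of $\pi'$ with the image of $Z$; this produces a regular alteration $Y' \to Y$ along which the bad locus extends to an snc divisor $D' \subset Y'$. Base change $X$ along $Y' \to Y$ and invoke the semistable reduction theorem for pointed curves over $(Y', D')$: after possibly a further alteration of $Y'$ concentrated along $D'$, together with blow-ups along regular centers of the total space, the family $X \times_Y Y' \to Y'$ becomes semistable. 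The resulting total space is then regular, and this supplies the sought alteration $b : X' \to X$. The snc condition on $b^{-1}(Z)$ is enforced by additional blow-ups along regular centers containing the strict transforms of the components of $Z$ and of $D'$, which do not enlarge the function field and hence do not affect the degree.

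The main obstacle, and the heart of Temkin's improvement over de Jong, is quantitative control of the primes dividing $[k(X'):k(X)]$. This requires a refined semistable reduction theorem asserting that the necessary base changes on $Y'$ can be chosen of degree whose prime factors lie among the residue characteristics of $X$, so that no extraneous primes are introduced; threading this through the inductive machinery while simultaneously allowing a quasi-excellent base of dimension up to three is the technical core of~\cite{temkin-tame} (and relies crucially on the prior dimension $\le 3$ resolution results used above, since those anchor the induction and provide the regular targets for the curve-fibration construction). Once this control is in place, the last assertion follows easily: when $S = \Spec(k)$ with $k$ perfect, the only residue characteristic is $\mathrm{char}(k)$, and the base changes can be arranged to be separable, making the alteration generically \'etale.
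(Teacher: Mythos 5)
This statement is quoted in the paper as an external result (Temkin, Thm.\ 1.2.5 of \cite{temkin-tame}); the paper offers no proof of it, so the only question is whether your sketch would stand on its own. It does not: the single point that makes the theorem hard --- that all the auxiliary degrees can be kept divisible only by the residue characteristics of $X$ --- is exactly the point you do not prove. You postulate a ``refined semistable reduction theorem'' with this prime control and then say that establishing it ``is the technical core of~\cite{temkin-tame}.'' That is circular as a proof proposal: you are citing the theorem's own source for its decisive ingredient. Moreover, the claim that such control can be threaded through de Jong's induction is not merely unproved but misdescribes how the result is actually obtained. De Jong's fibration-in-curves argument introduces uncontrolled degrees at several places (the projection/three-point trick, passage to Galois closures, the alterations of the base needed for semistable reduction of the curve fibration), and eliminating even a single prime $\ell$ from the degree already required Gabber's substantially more elaborate equivariant refinement, not a tweak of semistable reduction. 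Temkin's passage from $\ell'$-alterations to $p$-alterations is achieved by a different mechanism altogether --- his tame distillation theorem, resting on inseparable local uniformization and stable modification --- which then gets combined with Gabber's theorem; none of this machinery appears in your outline.

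Two smaller points. First, note that the hypothesis bounds $\dim(S)$, not $\dim(X)$, so your base case ``$X\to S$ quasi-finite, hence $\dim X\le 3$'' is only the start of an induction on relative dimension; that is fine structurally, but the base case itself already leans on resolution for quasi-excellent threefolds (Cossart--Piltant), which you should cite as an input rather than fold silently into ``Abhyankar--Lipman.'' Second, the final assertion (generic \'etaleness over a perfect field) again cannot be dispatched by saying ``the base changes can be arranged to be separable''; over a perfect field this is the generically \'etale alteration theorem of de Jong and its $p$-alteration refinement, and it needs the same controlled machinery as the main statement. For the purposes of this paper the correct move is the one the authors make: treat the theorem as a black box from \cite{temkin-tame} (see also \cite{illusie-temkin}) rather than attempt to reprove it.
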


\begin{remark}
We will only apply Temkin's result when the base 
$S$ is the spectrum of a field or an excellent discrete valuation ring.
\end{remark}

\begin{proof}[Proof of Proposition \ref{prop:indregular}]
    The argument is similar to the proof of Proposition~\ref{prop:indsmooth},
    but we use Temkin's work on $p$-alterations~\cite{temkin-tame} instead of
    his work on inseparable local uniformization.
    
    Note that $K^\circ$ is a $\ZZ_{(p)}$-algebra since it has residue characteristic $p$.
    As in the proof of
    Proposition~\ref{prop:indsmooth}, it is enough to prove the following: for
    every finitely generated $\ZZ_{(p)}$-subalgebra $R$ of $K^\circ$, 
    there is a regular finitely generated
    $\ZZ_{(p)}$-algebra $T$ together with a factorization of $R\hookrightarrow
    K^\circ$ through maps $R\hookrightarrow T\hookrightarrow K^\circ$. 
    We may assume $R$ is not regular as otherwise there is nothing to prove.

    Let $M\subseteq K$ be the field of fractions of $R$ and let $M^\circ=M\cap K^\circ$.
    By Theorem \ref{thm:Temkin-alteration}, there is a regular integral scheme $X'$ and a
    projective alteration 
    $$X'\rightarrow\Spec(R)$$ 
    of degree $p^n$ for some $n \geq 1$
    (since we assumed $R$ is not regular and since all residue fields of $R$ have 
    characteristic $0$ or $p$). Moreover, since $\FF_p$ is perfect, we can
    also assume regardless of the characteristic of $K$ that $X'\rightarrow\Spec(R)$
    is generically \'etale.
        
     Let $L$ be the function field of $X'$. Since
    $K$ is $p$-separably closed, and $M \hookrightarrow L$ is a finite separable field 
    extension of degree $p^n$, there is a dotted arrow making the diagram
    $$\xymatrix{
        L\ar@{.>}[dr]&\\
        M\ar[u]\ar[r]&K
    }$$ commute. Indeed, by definition of $p$-separable closedness,
    $K\rightarrow L\otimes_M
    K$ splits in the category of $K$-algebras. 
    In particular, we obtain a commutative diagram
    $$\xymatrix{
        \Spec K\ar[r]\ar[d]&X'\ar[d]\\
        \Spec K^\circ\ar[r]\ar@{.>}[ur]&\Spec R
    }$$ together with a lift by the valuative criterion.
    Let $\Spec T\subseteq X'$ be an open neighborhood of the image under $\Spec
    K^\circ\rightarrow X'$ of the
    closed point of $\Spec K^\circ$. We then have a commutative
    diagram $$\xymatrix{
        R\ar@{^{(}->}[d]\ar[r]& K^\circ\ar[d]\\
        T\ar@{^{(}->}[r]\ar[ur]&K
    }$$
    where $T$ is regular Noetherian, finite type over $\ZZ_{(p)}$, 
    and injects into $K^\circ$ by commutativity
    of the above square. But this is what we wanted to show.
\end{proof}

As a consequence, we can deduce that absolutely integrally closed valuation rings
are always ind-regular.

\begin{corollary}
\label{cor:absoluteintegralclosed}
Let $V$ be an absolutely integrally closed valuation ring that dominates the 
local ring $S$, where $S = \QQ, \FF_p$ or $\ZZ_{(p)}$ for a prime $p > 0$. 
Then $V$ is a directed colimit of its finitely generated and regular $S$-subalgebras.
In particular, the absolute integral closure of a Henselian valuation ring is 
ind-regular.
\end{corollary}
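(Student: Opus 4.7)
The plan is to dispatch the three allowed choices of $S$ one at a time by invoking the appropriate previously established ind-regularity result. The common initial observation is that absolute integral closedness of $V$ forces its fraction field $K := \Frac(V)$ to be algebraically closed; in particular $K$ is perfect and is $p$-separably closed for every prime $p$, so $V$ inherits strong structural properties from $K$.

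For $S = \QQ$ the ring $V$ has equal characteristic zero, and I would quote the classical local uniformization of Zariski~\cite{Zar40} referenced in the introduction, which presents any such $V$ as a directed colimit of smooth (hence regular) finitely generated $\QQ$-subalgebras. For $S = \FF_p$ the first step is to verify that $V$ is a perfect valuation ring: given $v \in V$, a $p$-th root $w \in K$ exists because $K$ is perfect, and $w \in V$ because $w$ satisfies $X^p - v = 0$ over $V$ and valuation rings are integrally closed in their fraction fields. Proposition~\ref{prop:indsmooth} then writes $V$ as a directed colimit of smooth $\FF_p$-subalgebras, which are automatically regular and finitely generated. For $S = \ZZ_{(p)}$, the field $K$ is algebraically closed and hence $p$-separably closed, and by the domination hypothesis $V$ has residue characteristic $p$, so Proposition~\ref{prop:indregular} applies verbatim.

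For the final sentence about a Henselian valuation ring $R$ with fraction field $F$, the plan is to observe that the absolute integral closure $R^+$, formed as the integral closure of $R$ inside a fixed algebraic closure $\overline{F}$, is itself a valuation ring: the Henselian hypothesis guarantees that the valuation on $F$ extends uniquely to every finite extension, and passing to the filtered colimit yields a unique extension to $\overline{F}$, whose valuation ring must coincide with $R^+$. Since $R^+$ is absolutely integrally closed by construction and dominates one of $\QQ$, $\FF_p$, or $\ZZ_{(p)}$ according to its characteristic, the first part of the corollary then gives the conclusion.

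No step of this plan looks technically deep: the three cases are each a direct citation of Zariski, Proposition~\ref{prop:indsmooth}, or Proposition~\ref{prop:indregular}. The only small verifications needed are that $V$ is perfect in equal characteristic $p$ and that the absolute integral closure of a Henselian valuation ring is again a valuation ring, both of which follow from the standard principle that valuations extend uniquely in the presence of integral closedness or a Henselian hypothesis.
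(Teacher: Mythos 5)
Your proposal is correct and follows the same three-case decomposition the paper uses, so the bulk of the argument matches: algebraic closedness of $K=\Frac(V)$ gives perfectness and $p$-separable closedness, and then Zariski, Proposition~\ref{prop:indsmooth}, and Proposition~\ref{prop:indregular} handle $S=\QQ$, $\FF_p$, $\ZZ_{(p)}$ respectively. You also spell out a small point the paper elides, namely that $V$ itself (not just $K$) is perfect in equal characteristic $p$, which is indeed needed to invoke Proposition~\ref{prop:indsmooth}; your verification via integral closedness is the right one.

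The one place you genuinely diverge from the paper is the final sentence. The paper shows that the absolute integral closure $R^+$ of a Henselian valuation ring $R$ is again a valuation ring by quoting Artin's result that $R^+$ is local together with Bourbaki's result that the integral closure of a valuation ring in a field extension is a Pr\"ufer domain (a local Pr\"ufer domain being a valuation ring). You instead argue via the unique-extension characterization of Henselianity: the valuation on $F$ extends uniquely to each finite extension, the corresponding valuation ring in that extension is the integral closure of $R$ there (uniqueness makes the usual intersection description a single ring), and taking the filtered union over all finite subextensions of $\overline F$ exhibits $R^+$ as a valuation ring of $\overline F$. Both routes are standard; yours is arguably more self-contained since it avoids citing Artin, while the paper's is shorter on the page and reuses the same Bourbaki reference it needs elsewhere for Corollary~\ref{cor:Prufer}. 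Either way, no gap.
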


\begin{proof}
Note the fraction field $K$ of $V$ is algebraically closed. In particular, 
$K$ is perfect and $p$-separably closed. If $S = \QQ$, that is, when $V$ 
has equal characteristic $0$, the result follows by Zariski's
local uniformization \cite{Zar40} (using a method of proof similar to 
Proposition~\ref{prop:indsmooth}) because \emph{any} valuation ring
of characteristic $0$ is a colimit of its smooth $\QQ$-subalgebras. 
In the other cases, the assertion follows by 
Proposition \ref{prop:indsmooth} (when $S = \FF_p$) and 
Proposition \ref{prop:indregular} (when $S = \ZZ_{(p)}$).

Finally, the absolute integral closure of a Henselian valuation
ring is also a valuation ring -- it is local by \cite[Prop. 1.4]{Art71}
and hence a valuation ring by \cite[Chp.~VI, $\mathsection 8.6$, Prop.~6]{Bou89},
and so, we are done by the first part of this Corollary.
\end{proof}

\begin{remark}
\label{rem:not-indsmooth}
{\*}
\begin{enumerate}
    \item[(1)] One can use de Jong's alterations \cite{AJ96} 
instead of Temkin's p-alterations
to show that absolutely integrally closed valuation rings are directed
colimits of regular subalgebras by adapting the argument of
Proposition \ref{prop:indregular}. Indeed, in the proof of loc. cit.
if $K$ is algebraically closed and contains the field $M$, then any 
finite extension of $M$ (coming from the function field of a regular alteration
$X'$) will automatically embed in $K$.
This then allows one to find a center of $K^\circ$ on $X'$ using the
valuative criterion, and the rest of the argument applies verbatim.

 \item[(2)] The fact that absolutely integrally closed valuation rings are filtered colimits
of their finitely generated (over $\QQ, \FF_p$ or $\ZZ_{(p)}$) regular subalgebras is not
a property shared by other absolutely integrally closed domains. For example, let
$R$ be a finitely generated $\QQ$-algebra which is a normal domain that is 
not Cohen--Macaulay (thus, $\dim R \geq 3$). We claim that the absolute integral closure
$R^+$ of $R$ cannot be expressed as a filtered colimit of  
regular $\ZZ$-algebras, that is, $R^+$ is not ind-regular. Indeed, assume for contradiction
that $R^+ = \colim_I S_i$ where the $S_i$ are regular $\ZZ$-algebras. Since $R^+$
is a $\QQ$-algebra and regularity is preserved under localization, replacing $S_i$ by
$\QQ \otimes_{\ZZ} S_i$, we may assume each $S_i$ is a regular $\QQ$-algebra.
Now as $R$ is finitely presented over $\QQ$, there exists some $S_i$ such that the $\QQ$-algebra map
$R \hookrightarrow R^+$ factors as $R \rightarrow S_i \rightarrow R^+$
   \cite[\href{https://stacks.math.columbia.edu/tag/00QO}{Tag
    00QO}]{stacks-project}.
Note $S_i \rightarrow R^+$ need not be injective because we are not assuming that $R^+$
has to be a colimit of regular \emph{subalgebras}. 
By normality of $R$, a well-known argument involving the trace map shows that
$R \hookrightarrow R^+$ is a pure map \cite[Ex. 1.1]{Bha12}, that is, the composition
$R \rightarrow S_i \rightarrow R^+$ is pure. Then $R \rightarrow S_i$ is pure 
as well. Since a pure subring of an equicharacteristic regular ring is Cohen--Macaulay 
\cite[Thm.~2.3]{HH95} (see also \cite{HM18} for a mixed characteristic version),
we conclude that $R$ must be Cohen--Macaulay, contradicting our choice. 
In a certain sense $R^+$ is as nice as possible. Indeed, choosing
a Noether Normalization $\QQ[x_1,\dots,x_n]$ of $R$, it is not hard to see that $R^+$
can be identified with $\QQ[x_1,\dots,x_n]^+$, the absolute integral closure of a 
polynomial ring over $\QQ$.

\item[(3)] With a little more work, 
similar examples of the failure of ind-regularity of $R^+$ 
can also be given in prime characteristic in such a way that the ring 
$R$ has `mild' singularities.  Indeed, the argument in (2) shows that if $R$ is 
a domain of prime characteristic $p > 0$ that is finite type over $\FF_p$, if
the map $R \rightarrow R^+$ is pure, and if $R^+$ is a filtered colimit of regular
$\FF_p$-algebras, then $R$ will be a pure subring of 
a regular ring.
Note that purity of $R \rightarrow R^+$ is equivalent to saying that $R$ is 
a splinter (Definition \ref{def:splinters} and Proposition 
\ref{prop:decent-colimit-splinter}), which by a result of Smith
implies that $R$ is $F$-rational \cite{Smi94} (a prime characteristic
analogue of rational singularities) and hence Cohen--Macaulay \cite{HH94}.
Thus, unlike our example in (2), Cohen--Macaulayness is automatic in prime
characteristic with purity of $R \rightarrow R^+$, when $R$ is excellent.
A well-known result of Huneke and Sharp shows that local cohomology modules 
of regular rings
of prime characteristic with support in any ideal have finitely many 
associated primes \cite[Cor. 2.3]{HS93}, and this property also descends 
under pure maps\footnote{We thank Linquan Ma for pointing out to us that the 
property of local cohomology 
having finitely many associated primes descends under pure maps.
} by an observation of N\'u\~{n}ez-Betancourt \cite[Prop. 2.2]{N-B12}.
The upshot is that if $R^+$ is ind-regular over $\FF_p$ and $R \rightarrow R^+$ is pure, 
then all local cohomology
modules of $R$ will have finitely many associated primes.
However, Singh and Swanson show that the ring
$$R = \FF_p[s,t,u,v,w,x,y,z]/(su^2x^2 + sv^2y^2 + tuxvy + tw^2z^2)$$
is one for which $R \rightarrow R^+$ is pure, but where the local cohomology module
$H^3_{(x,y,z)}(R)$ has infinitely many associated primes \cite[Thm. 5.1]{SS04}. Thus, such an $R$
cannot have $R^+$ that is ind-regular over $\FF_p$ by our discussion above. 
We note that in the theory of Frobenius singularities, the ring $R$ in 
Singh and Swanson's example is considered to have `mild' singularities since it is 
strongly $F$-regular, a prime characteristic analogue of a klt-singularity.
Note that any $F$-regular domain $R$ has the property that $R \rightarrow R^+$
is pure.
\end{enumerate}
\end{remark}

\section{Valuation rings are derived splinters}
Utilizing the results of the previous sections, we provide three different proofs of the following
result.

\begin{theorem}
\label{thm:valuations-derived-splinters}
Let $V$ be a valuation ring. If $f: X \rightarrow \Spec(V)$ is a proper surjective morphism, then
the induced map $\cO_{\Spec(V)} \rightarrow \bR f_*\cO_{X}$ splits in $\Dscr_\qc(\Spec(V))$. In 
particular, valuation rings are derived splinters.
\end{theorem}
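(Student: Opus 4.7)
The plan is to give the approximation proof, which combines descent to the absolute integral closure with the ind-regularity of absolutely integrally closed valuation rings and Bhatt's derived direct summand theorem. First, using Corollary \ref{cor:dominating-proper-by-flat}, I may replace $f$ by a flat, proper, finitely presented and surjective morphism: if $i: X' \hookrightarrow X$ is the closed subscheme whose composite $f' = f \circ i: X' \to \Spec(V)$ has these properties, then the surjection $\cO_X \twoheadrightarrow i_*\cO_{X'}$ induces a map $\bR f_*\cO_X \to \bR f'_*\cO_{X'}$, so any section of $\cO_{\Spec(V)} \to \bR f'_*\cO_{X'}$ yields one of $\cO_{\Spec(V)} \to \bR f_*\cO_X$ by precomposition with the induced map. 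Hence I assume henceforth that $f$ is flat, proper, finitely presented, and surjective.

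Next I would reduce to the absolutely integrally closed case. Let $\overline{K}$ be an algebraic closure of $K = \Frac(V)$ and choose, via Chevalley's extension theorem, a valuation ring $V^+$ of $\overline{K}$ dominating $V$; then $V^+$ is absolutely integrally closed. The inclusion $V \hookrightarrow V^+$ is faithfully flat, because $V^+$ is torsion-free hence flat over $V$ by Lemma \ref{lem:fin-gen-torsion free}, and domination of the local ring $V$ upgrades flatness to faithful flatness. Since $V$ is universally cohesive by Corollary \ref{cor:valuation-rings-universally-cohesive}, Proposition \ref{prop:descent-derived-splinter} reduces the problem to showing that $V^+$ itself is a derived splinter.

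Finally, by Corollary \ref{cor:absoluteintegralclosed} I write $V^+ = \colim_{i \in I} R_i$ as a directed colimit of finitely generated regular (hence Noetherian) subalgebras over $\QQ$, $\FF_p$, or $\ZZ_{(p)}$. Standard Noetherian approximation along filtered colimits of rings spreads out a proper, flat, finitely presented, surjective morphism $g: Y \to \Spec(V^+)$ to a proper, flat, finitely presented, surjective morphism $g_i: Y_i \to \Spec(R_i)$ for some sufficiently large $i$, with $Y \simeq Y_i \times_{\Spec(R_i)} \Spec(V^+)$. Bhatt's theorem \cite{Bha18} applied to the regular Noetherian ring $R_i$ produces a splitting of $\cO_{\Spec(R_i)} \to \bR (g_i)_*\cO_{Y_i}$ in $\Dscr_\qc(\Spec(R_i))$. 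Pulling back along the flat map $\Spec(V^+) \to \Spec(R_i)$ and invoking the flat base change isomorphism in Theorem \ref{thm:derived-pushforward-fpqc-pullback}(b) then transports this splitting to the desired splitting of $\cO_{\Spec(V^+)} \to \bR g_*\cO_Y$. The main technical obstacle is verifying that all the required properties of $g$---flatness, properness, finite presentation, and surjectivity---descend simultaneously to some stage of the filtered system; this is standard but requires stitching together several approximation results from the Stacks project.
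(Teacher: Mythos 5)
Your proposal reproduces the paper's third proof essentially verbatim: reduce to flat, finitely presented, proper, surjective $f$ via Corollary~\ref{cor:dominating-proper-by-flat}; descend to the absolutely integrally closed case via faithfully flat descent (Proposition~\ref{prop:descent-derived-splinter}, using Corollary~\ref{cor:valuation-rings-universally-cohesive}); express the absolutely integrally closed valuation ring as a directed colimit of regular Noetherian subalgebras (Corollary~\ref{cor:absoluteintegralclosed}); spread out; apply the derived direct summand theorem and pull back by flat base change.

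One small inaccuracy to correct: you call $\Spec(V^+) \to \Spec(R_i)$ a ``flat map,'' but this is false in general. If $R_i$ has dimension $\geq 2$ and $V^+$ dominates a height-$\geq 2$ prime, then $\Tor_1^{R_i}(V^+, k)$ is nonzero (compute with a Koszul resolution of the residue field), so $V^+$ is not flat over $R_i$; indeed $V^+$ being torsion-free over $R_i$ does not help once $R_i$ is not a Pr\"ufer domain. The Tor-independence needed for Theorem~\ref{thm:derived-pushforward-fpqc-pullback}(b) instead comes from the flatness of the \emph{other} leg, namely $g_i: Y_i \to \Spec(R_i)$, which you did arrange to be flat during the spreading-out step. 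The paper states this explicitly in its third proof. One further minor point: you cite only~\cite{Bha18} for the input that regular Noetherian rings are derived splinters; that reference handles mixed characteristic, while the regular $R_i$ here may be of equal characteristic, so one also needs the equal-characteristic cases, as assembled in Theorem~\ref{thm:derived-direct-summand} from~\cite{Hoc73,Bha12,Kov00,Ma18}.
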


\begin{remark}
\label{rem:flat-fp}
Note that we do not assume $f$ is finitely presented in the statement of
Theorem \ref{thm:valuations-derived-splinters}, although we have finite 
presentation hypothesis built in to our definition of a derived splinter 
(Definition \ref{def:d-splinter}). 
Using Corollary \ref{cor:dominating-proper-by-flat}, 
it suffices to prove Theorem \ref{thm:valuations-derived-splinters} when $f$ 
is flat and finitely presented, 
in addition to being proper and surjective. Indeed, for an arbitrary proper 
surjective $f: X \rightarrow \Spec(V)$, choose a closed subscheme 
$i: X' \hookrightarrow X$ of $X$ such that 
$f \circ i$ is proper, flat, surjective and finitely presented as in 
Corollary \ref{cor:dominating-proper-by-flat}. Since
$$\cO_{\Spec(V)} \rightarrow \bR (f \circ i)_* \cO_{X'}$$
factors as $\cO_{\Spec(V)} \rightarrow \bR f_* \cO_{X} \rightarrow \bR f_* \bR i_* \cO_{X'}$ 
(the second map is obtained by applying $\bR f_*$ to $\cO_{X} \rightarrow \bR i_* \cO_{X'}$), 
a splitting of $\cO_{\Spec(V)} \rightarrow \bR (f \circ i)_* \cO_{X'}$ 
will induce a splitting of $\cO_{\Spec(V)} \rightarrow \bR f_* \cO_{X}$.
\end{remark}

Recall that a domain $R$ is a \emph{Pr\"ufer domain} if for any prime ideal $\fp$ of $R$
the localization $R_\fp$ is a valuation ring.  As a consequence of Theorem 
\ref{thm:valuations-derived-splinters}, we can conclude that Pr\"ufer domains are also 
derived splinters.

\begin{corollary}
\label{cor:Prufer}
Any Pr\"ufer domain is a universally cohesive derived splinter. In particular,
the absolute integral closure of a valuation ring is a derived splinter.
\end{corollary}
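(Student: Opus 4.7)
The plan is to combine Theorem~\ref{thm:valuations-derived-splinters} with the localization principle of Lemma~\ref{lem:d-splinter-local}, after first verifying universal cohesiveness. Let $R$ be a Prüfer domain.

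First I would establish that $R$ is universally cohesive. By Lemma~\ref{lem:polynomial-extensions} this reduces to checking that each polynomial ring $R[x_1, \ldots, x_n]$ is coherent, which is precisely the statement that $R$ is \emph{stably coherent}. This is a classical theorem of Glaz; see \cite[Thm.~7.3.3]{Gla89}. (In principle one could also attempt to rerun the argument for Corollary~\ref{cor:valuation-rings-universally-cohesive} directly, using that any finitely generated ideal of $R[x_1, \ldots, x_n]$ is torsion-free over $R$ and invoking the Raynaud--Gruson generalization of Nagata's result mentioned just before Theorem~\ref{thm:Nagata-amazing}, but citing stable coherence is cleanest.)

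Next I would invoke Lemma~\ref{lem:d-splinter-local}. For every prime ideal $\fp \subset R$, the localization $R_\fp$ is a valuation ring by the Prüfer hypothesis, and hence a derived splinter by Theorem~\ref{thm:valuations-derived-splinters}. Since $R$ is a universally cohesive domain by the previous paragraph, the implication $(2) \Rightarrow (1)$ of Lemma~\ref{lem:d-splinter-local} immediately yields that $R$ itself is a derived splinter.

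For the ``in particular'' claim, it suffices to observe that the absolute integral closure $V^+$ of a valuation ring $V$ is a Prüfer domain. This is classical: if $V$ is a valuation ring of a field $K$ and $L/K$ is any algebraic extension, then the integral closure of $V$ in $L$ is Prüfer, with its localizations at primes being exactly the valuation rings of $L$ dominating $V$ (see, e.g., \cite[Chp.~VI]{Bou89}). Taking $L$ to be an algebraic closure of $K$ and applying the first two steps to $V^+$ finishes the proof. The only nontrivial input beyond results already developed in the paper is the stable coherence of Prüfer domains, which is a citation rather than a genuine obstacle.
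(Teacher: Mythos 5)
Your argument matches the paper's almost exactly: both reduce to showing $R$ is universally cohesive, then apply Lemma~\ref{lem:d-splinter-local} with Theorem~\ref{thm:valuations-derived-splinters} at each localization, and conclude via the fact that $V^+$ is Pr\"ufer. The only cosmetic divergence is in how universal cohesiveness is justified: you lead with a citation to Glaz (whose Thm.~7.3.3 is stated for valuation rings, so you should double-check it actually covers the Pr\"ufer case), whereas the paper runs the Raynaud--Gruson argument that you sketch parenthetically --- torsion-free hence flat over $R$, so a finitely generated ideal of $R[x_1,\dots,x_n]$ is finitely presented by \cite[Thm.~(3.4.6)]{raynaud-gruson} --- which is the cleaner and fully self-contained route, and also sidesteps the pitfall (noted in the paper's footnote) that coherence does not in general pass from localizations to $R$.
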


\begin{proof}
Let $R$ be a Pr\"ufer domain. Since each local ring of $R$ is a valuation domain,
the fact that $R$ is a derived splinter will follow from Theorem 
\ref{thm:valuations-derived-splinters} and Lemma \ref{lem:d-splinter-local}
if we can show that $R$ is universally cohesive.\footnote{Universal cohesiveness
of a Pr\"ufer domain does not follow from universal cohesiveness of all its 
localizations by Harris's example of a ring which is not coherent even though 
its local rings are all Noetherian \cite[Thm. 3]{Har67}.}

First note that since flatness is a local property, any torsion free 
module over $R$ is flat by the correspond fact for valuation rings 
(Lemma \ref{lem:fin-gen-torsion free}). To show that $R$ is 
universally cohesive, it suffices to show by Lemma \ref{lem:polynomial-extensions} 
that a polynomial ring 
$B = R[x_1,...,x_n]$ over $R$ is coherent. That is, we have to show that
any finitely generated ideal $I$ of $B$ is finitely presented as a $B$-module.
Note that $I$ is a torsion free, and consequently, a flat $R$-module and that
$B$ is a finitely presented $R$-algebra. Then
$I$ is a finitely presented $B$-module by a miraculous result of
Raynaud and Gruson \cite[Thm.~(3.4.6)]
{raynaud-gruson}.

If $V$ is a valuation ring, then its absolute integral closure 
$V^+$ is a Pr\"ufer domain \cite[Chp.~VI, $\mathsection 8.6$, Prop.~6]{Bou89}, 
but not necessarily a valuation ring unless $V$ is Henselian. 
Then the result follows.
\end{proof}

\begin{remark}
\label{rem:ray-gru-remarkable}
A straightforward
consequence of Raynaud and Gruson's \cite[Thm. (3.4.6)]{raynaud-gruson} provides a
significant generalization
of Nagata's result recalled in Theorem \ref{thm:Nagata-amazing}(1). Namely, if $A$ is 
\emph{any} integral domain  
and $B$ is a finitely generated $A$-algebra that is flat
over $A$, then $B$ is a finitely presented $A$-algebra. This last 
observation shows that a Pr\"ufer domain $R$ is a derived splinter in 
Bhatt's sense of the notion, that is, for any proper, surjective
morphism $f: X \rightarrow \Spec(R)$, $\cO_{\Spec(R)} \rightarrow \bR f_*\cO_X$
splits even when $f$ is not finitely presented. Indeed, since 
torsion free $R$-modules are flat, the closed subscheme $X'$ of $X$ defined
by the $R$-torsion ideal is torsion free, hence flat, and hence finitely presented
over $\Spec(R)$. Moreover, $X' \rightarrow \Spec(R)$ is surjective and proper
as in the proof of Corollary \ref{cor:dominating-proper-by-flat}. Thus, the 
splitting of $\cO_{\Spec(R)} \rightarrow \bR (f\circ i)_* \cO_{X'}$ (by Corollary \ref{cor:Prufer}) 
induces 
a splitting of $\cO_{\Spec(R)} \rightarrow \bR f_*\cO_X$ by Remark \ref{rem:flat-fp}.
\end{remark}

\subsection{A proof using the valuative criterion}
Recall that a quasi-compact universally closed morphism of schemes $f: X \rightarrow Y$ 
satisfies the \emph{existence} part of the valuative criterion, that is, 
given any solid commutative diagram
\begin{center}
\begin{tikzcd}
  \Spec(K) \arrow[r] \arrow[d]
    & X \arrow[d, "f"] \\
  \Spec(V) \arrow[ru , dashrightarrow]\arrow[r]&Y 
\end{tikzcd}
\end{center}
where $V$ is a valuation ring with fraction field $K$, the dotted arrow exists making the 
above diagram commute \cite[\href{https://stacks.math.columbia.edu/tag/01KF}{Tag 01KF}]
{stacks-project}. 

Using the existence part of the valuative criterion, one can show that 
absolutely integrally closed valuation rings satisfy the derived splinter 
property for a more general class of morphisms
than just finitely presented, proper, surjective ones.

\begin{lemma}
\label{lem:lifting-identity}
Let $V$ be an absolutely integrally closed valuation ring. 
Let $f: X \rightarrow \Spec(V)$ be a 
quasi-compact universally closed surjective morphism of schemes 
whose generic fiber is locally of 
finite type. Then $f$ admits a section, that is, there exists
a morphism 
$g: \Spec(V) \rightarrow X$ such that $f \circ g = \id_{\Spec(V)}$. 
In particular,
$\cO_{\Spec(V)} \rightarrow \bR f_*\cO_X$ splits in $\Dscr(\Spec(V))$. 
\end{lemma}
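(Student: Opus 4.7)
My plan is to produce an honest section $g \colon \Spec(V) \to X$ of $f$, after which the splitting of the unit map $\cO_{\Spec(V)} \to \bR f_*\cO_X$ is a formal consequence. The two ingredients are the algebraic closedness of $K = \Frac(V)$ (which follows from $V$ being absolutely integrally closed, since then $V$ equals its own absolute integral closure) together with the valuative criterion of universal closedness.

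The first step is to produce a $V$-morphism $\Spec(K) \to X$ lying over the generic point of $\Spec(V)$. Since $f$ is surjective, the generic fiber $X_K \coloneqq X \times_{\Spec(V)} \Spec(K)$ is nonempty; since $f$ is quasi-compact and $\Spec(V)$ is quasi-compact, the scheme $X$ (and hence $X_K$) is quasi-compact; and $X_K$ is locally of finite type over $K$ by hypothesis. Because $K$ is algebraically closed, Hilbert's Nullstellensatz supplies a closed point of $X_K$ with residue field $K$, i.e., a $K$-rational point $\Spec(K) \to X_K$. Composing with the projection $X_K \to X$ yields a morphism $\Spec(K) \to X$ whose composition with $f$ is the canonical inclusion of the generic point $\Spec(K) \hookrightarrow \Spec(V)$.

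Next I apply the existence part of the valuative criterion (Stacks \href{https://stacks.math.columbia.edu/tag/01KF}{Tag 01KF}), which is available because $f$ is quasi-compact and universally closed, to lift the square with vertical arrow $\Spec(K) \hookrightarrow \Spec(V)$ on the left and $f$ on the right to a morphism $g \colon \Spec(V) \to X$. By construction $f \circ g = \id_{\Spec(V)}$, so $g$ is a section. To deduce the derived splitting, apply $\bR f_*$ to the unit $\cO_X \to \bR g_* g^*\cO_X = \bR g_*\cO_{\Spec(V)}$ to obtain
\[
\bR f_*\cO_X \longrightarrow \bR f_* \bR g_*\cO_{\Spec(V)} \simeq \bR(f \circ g)_*\cO_{\Spec(V)} = \cO_{\Spec(V)};
\]
the composition with $\cO_{\Spec(V)} \to \bR f_*\cO_X$ is the unit for the identity morphism, hence the identity, exhibiting the desired splitting.

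No serious obstacle arises: the crux is simply verifying that the two classical inputs combine as expected. The only mild subtlety is confirming that $f$ is quasi-compact suffices to invoke the valuative criterion (rather than needing finite type), and that the locally of finite type condition on the generic fiber is exactly what is needed to produce the $K$-rational point via the Nullstellensatz.
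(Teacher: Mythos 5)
Your argument is correct and is essentially identical to the paper's: produce a $K$-rational point of the generic fiber using algebraic closedness of $K=\Frac(V)$ and the locally-of-finite-type hypothesis, lift it to a section $g\colon\Spec(V)\to X$ via the existence part of the valuative criterion for the quasi-compact universally closed morphism $f$, and then split the unit map by applying $\bR f_*$ to $\cO_X\to\bR g_*\cO_{\Spec(V)}$. No differences worth noting.
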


\begin{proof}
If a section $g$ of $f$ exists, then a section of 
$\cO_{\Spec(V)} \rightarrow \bR f_*\cO_X$ is 
obtained by applying $\bR f_*$ to $\cO_X \rightarrow \bR g_*\cO_{\Spec(V)}$ and 
using the natural isomorphism $\bR(f \circ g)_* \simeq \bR f_* \circ \bR g_* $. 

To see the existence of $g$, note that if $K = \Frac(V)$, then the generic fiber 
$$f_K: X_K \rightarrow \Spec(K)$$ 
of $f$ has a $K$-rational point. Indeed, $K$ is algebraically closed by the 
assumption that
$V$ is absolutely integrally closed and $X_K$ is of locally of finite type over 
$\Spec(K)$ and non-empty. The composition
$$\varphi \coloneqq \Spec(K) \rightarrow X_K \rightarrow X$$
then gives us a commutative diagram
\begin{center}
\begin{tikzcd}
  \Spec(K) \arrow[r, "\varphi"] \arrow[d]
    & X \arrow[d, "f"] \\
  \Spec(V) \arrow[r, "\id"]&\Spec(V),
\end{tikzcd}
\end{center}
and so, the existence part of the valuative criterion provides a lift of $\id_{\Spec(V)}$ along $f$,
giving us the desired section $g$.
\end{proof}

We can now provide one proof of Theorem \ref{thm:valuations-derived-splinters}.

\begin{proof}[First proof of Theorem \ref{thm:valuations-derived-splinters}]
Let $f: X \rightarrow \Spec(V)$ be a proper surjective morphism. Using Remark \ref{rem:flat-fp}, we
may assume $f$ is finitely presented. Let $K$ be the fraction field of $V$ and
    let $\overline{K}$ 
denote an algebraic closure of $K$. Choose a valuation ring $\overline{V}$ of $\overline{K}$ that 
dominates $V$. Then the local homomorphism $V \hookrightarrow \overline{V}$ is flat (hence faithfully
flat) by Lemma \ref{lem:fin-gen-torsion free}. Moreover, $\overline{V}$ is absolutely integrally
closed because $\overline{K}$ is algebraically closed and $\overline{V}$ is integrally closed in
$\overline{K}$. Since $\overline{V}$ is a derived splinter by Lemma \ref{lem:lifting-identity}, it
follows that $\cO_{\Spec(V)} \rightarrow \bR f_* \cO_X$ splits by faithfully flat descent of the 
derived splinter property for universally cohesive rings (Proposition 
\ref{prop:descent-derived-splinter}). Note that in order to apply Proposition 
\ref{prop:descent-derived-splinter} we need $f$ to be finitely presented in addition to being proper
and surjective.
\end{proof}

\subsection{A proof using the splinter property}

Just as we modified the usual definition of a derived splinter to make the notion
more amenable to explorations in a non-Noetherian setting (see Definition \ref{def:d-splinter}),
we would like to make a similar modification of the notion of a splinter.
To justify our definition, we first prove the following algebraic result:

\begin{lemma}
\label{lem:splinter-new-def}
Let $R$ be a ring and consider the following statements.
\begin{enumerate}
    \item[{\em (i)}] Every finite ring map $R \rightarrow S$ that induces a surjection on $\Spec$
	      admits an $R$-linear left inverse.
      \item[{\em (ii)}] Every finite ring map $R \rightarrow S$ that induces a surjection on $\Spec$
		  is pure as a map of $R$-modules.
      \item[{\em (iii)}] Every ring map $R \rightarrow S$ that induces a surjection on $\Spec$ and 
          such that $S$ is finitely presented as an $R$-module admits an $R$-linear left 
          inverse.
\end{enumerate}
Statements (ii) and (iii) are equivalent, while (i) implies (ii) (hence (iii)). Moreover, all three
conditions are equivalent when $R$ is Noetherian.
\end{lemma}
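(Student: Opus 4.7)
The plan is to prove four assertions: (i) $\Rightarrow$ (ii), (ii) $\Rightarrow$ (iii), (iii) $\Rightarrow$ (ii), and the Noetherian collapse (iii) $\Rightarrow$ (i). The implication (i) $\Rightarrow$ (ii) is immediate from the preliminaries, since any ring map that admits an $R$-linear left inverse splits in $\Mod_R$ and hence is pure.

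For (ii) $\Rightarrow$ (iii), let $R \to S$ have $S$ finitely presented as an $R$-module and Spec-surjective. Since $S$ is in particular finite over $R$, (ii) gives that $R \to S$ is pure, and in particular injective. The cokernel $S/R$ is then finitely presented, being the quotient of the finitely presented module $S$ by the finitely generated submodule $R$. The classical result that a pure short exact sequence with finitely presented cokernel splits (see \cite[\href{https://stacks.math.columbia.edu/tag/058L}{Tag 058L}]{stacks-project}) then yields an $R$-linear left inverse.

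The main obstacle is (iii) $\Rightarrow$ (ii), since in the non-Noetherian setting finite ring maps need not be finitely presented as $R$-modules. The idea is to approximate $S$ by finitely-presented-as-$R$-module quotients to which (iii) applies. Given a finite Spec-surjective $\varphi: R \to S$, choose $R$-module generators $s_1, \ldots, s_n$ of $S$ with monic annihilating polynomials $m_i(T) \in R[T]$, present $S = R[x_1, \ldots, x_n]/I$ via $x_i \mapsto s_i$, and let $J_0 \subseteq I$ be the finitely generated ideal generated by the $m_i(x_i)$. For each finitely generated ideal $J$ with $J_0 \subseteq J \subseteq I$, set $S_J = R[x_1, \ldots, x_n]/J$. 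Each $S_J$ is finite over $R$ (generated as an $R$-module by monomials of degree less than $\deg m_i$ in the $i$-th variable) and finitely presented as an $R$-algebra, and hence finitely presented as an $R$-module by \cite[\href{https://stacks.math.columbia.edu/tag/0562}{Tag 0562}]{stacks-project}. Moreover $R \to S_J$ is Spec-surjective, because $\ker(R \to S_J) \subseteq \ker(R \to S)$ lies in the nilradical of $R$ and $R \to S_J$ is integral, so its image on $\Spec$ is all of $\Spec R$. By (iii), each $R \to S_J$ splits, and so is pure. Since $S = \colim_J S_J$ is a filtered colimit of $R$-modules, tensoring with any $R$-module $M$ gives $M \otimes_R S = \colim_J (M \otimes_R S_J)$, and the natural map $M \to M \otimes_R S$ is the filtered colimit of the injections $M \to M \otimes_R S_J$. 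Filtered colimits of injections being injective, we conclude that $R \to S$ is pure.

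Finally, in the Noetherian case, every finitely generated $R$-module is automatically finitely presented, since submodules of $R^n$ are finitely generated. Hence every finite $R$-algebra is finitely presented as an $R$-module, condition (iii) reduces to condition (i), and combined with the three implications above all three conditions coincide when $R$ is Noetherian.
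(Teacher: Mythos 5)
Your proof is correct and follows the same overall decomposition as the paper's: (i) $\Rightarrow$ (ii) is immediate, (ii) $\Leftrightarrow$ (iii) uses that a pure map with finitely presented cokernel splits together with a filtered approximation of a finite $R$-algebra by finitely presented ones, and the Noetherian case collapses (iii) into (i). The only substantive difference is in (iii) $\Rightarrow$ (ii): the paper cites \cite[\href{https://stacks.math.columbia.edu/tag/09YY}{Tag 09YY}]{stacks-project} for the directed system of finite, finitely presented $R$-algebras $S_i$ with surjective transition maps expressing $S$ as their colimit, and deduces Spec-surjectivity of each $R \to S_i$ simply from the factorization $R \to S_i \twoheadrightarrow S$; you instead construct the filtered system by hand via the monic annihilating polynomials $m_i(x_i)$ and the finitely generated ideals $J_0 \subseteq J \subseteq I$, and verify Spec-surjectivity of $R \to S_J$ via $\ker(R \to S_J) \subseteq \ker(R \to S) \subseteq \mathrm{nil}(R)$ plus lying over. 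Both arguments are valid; your construction is more self-contained, though the surjectivity argument is a slight detour --- since $S_J$ surjects onto $S$, you already have the factorization $R \to S_J \to S$ with $\Spec S \to \Spec R$ surjective, so the paper's one-line argument applies directly to your $S_J$ as well.
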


\begin{proof}
It is clear that (i) implies (ii) and (iii). We now prove the equivalence of (ii) and (iii).
Assuming (ii), if $R \rightarrow S$ is map which is surjective on $\Spec$ and such 
that $S$ is a finitely presented $R$-module, then $R \rightarrow S$ is pure. In particular,
$R \rightarrow S$ is injective, and so, $\coker(R \rightarrow S)$ is a finitely presented 
$R$-module by \cite[\href{https://stacks.math.columbia.edu/tag/0519}{Tag 0519}(4)]{stacks-project}.
However, any pure ring map with a finitely presented cokernel splits by \cite[Cor. 5.6]{HR76},
proving (ii) $\Rightarrow$ (iii). 

Now assume (iii). Let $R \rightarrow S$ be a finite ring map that is surjective on $\Spec$. 
By \cite[\href{https://stacks.math.columbia.edu/tag/09YY}{Tag 09YY}]{stacks-project},
$S$ can be expressed as a directed colimit of a collection of $R$-algebras $\{S_i: i \in I\}$,
where each $S_i$ is a finite and finitely presented $R$-algebra and the transition maps
$S_i \rightarrow S_j$ are surjections. Since the composition 
$$R \rightarrow S_i \rightarrow S = \colim S_i$$
induces a surjection on $\Spec$, the maps $R \rightarrow S_i$ do as well. But a finite
and finitely presented $R$-algebra is finitely presented as an $R$-module 
\cite[\href{https://stacks.math.columbia.edu/tag/0564}{Tag 0564}]{stacks-project}. Therefore
each map $R \rightarrow S_i$ splits by (iii), and one can check that a directed colimit of
split maps is pure. Thus, (iii) $\Rightarrow$ (ii).

Finally, when $R$ is Noetherian we see that (iii) implies (i) because any finite $R$-module is 
also finitely presented, and so, all three conditions are equivalent.
\end{proof}

Inspired by the lemma, we make the following definition.

\begin{definition}
\label{def:splinters}
A ring $R$ is a \emph{splinter} if it satisfies condition (ii) (equivalently, (iii)) of Lemma 
\ref{lem:splinter-new-def}. Globally, a scheme $S$ is a \emph{splinter} if for every finite, 
surjective and finitely presented morphism $f: X \rightarrow S$, the induced map 
$\cO_S \rightarrow f_*\cO_X$ splits in the category of $\cO_S$-modules.
\end{definition}

Thus, any derived splinter (Definition \ref{def:d-splinter}) is a splinter.

\begin{remark}
Rings satisfying condition (i) of Lemma \ref{lem:splinter-new-def} are usually defined to be
splinters in the literature. However, purity is a less restrictive notion than 
splitting in the setting of non-Noetherian rings. Moreover, a pure ring map satisfies 
many of the desirable properties of split (and even faithfully
flat) ring maps. Globally, finite and finitely presented morphisms are more
    tractable than finite morphisms in a non-Noetherian setting.
\end{remark}

In \cite[Rem. 5.0.7]{Dat17(a)}, the second author observes that valuation rings of arbitrary 
characteristic satisfy condition the stronger condition (i) of Lemma \ref{lem:splinter-new-def}. 
Since the proof is a fairly straightforward consequence of the finiteness results 
discussed in this paper, we include the argument for the reader's convenience.

\begin{proposition}
\label{prop:valuations-splinters}
Let $V$ be a valuation ring. Then any finite ring map $V \rightarrow S$ that induces 
a surjection on $\Spec$ admits a $V$-linear section. In particular, valuation 
rings are splinters.
\end{proposition}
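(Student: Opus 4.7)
The plan is to reduce, by quotienting out $V$-torsion, to the classical observation that a cyclic submodule of a free module over a valuation ring is a direct summand exactly when it is generated by a vector with a unit coordinate.

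First I would note that $V \hookrightarrow S$ is injective: surjectivity on $\Spec(V)$ forces $\ker(V \to S)$ to lie in every prime of $V$, hence in the nilradical, which is $(0)$ because $V$ is a domain. Let $T \subseteq S$ be the $V$-torsion submodule. Since $V$ is a domain, $T$ is in fact an ideal of $S$, $V \cap T = 0$, and any $V$-linear $\sigma \colon S \to V$ must kill $T$ (from $vs = 0$ with $v \neq 0$ one gets $v\sigma(s) = 0$, and then $\sigma(s) = 0$). Thus constructing a $V$-linear retraction of $V \hookrightarrow S$ reduces to doing so for $V \hookrightarrow S/T$.

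Next, $S/T$ is a nonzero, finitely generated, torsion-free $V$-module, hence free of some positive rank $n$ by Lemma \ref{lem:fin-gen-torsion free}. Fix an isomorphism $S/T \cong V^n$ and write $e = (e_1,\dots,e_n)$ for the image of $1_V$. A $V$-linear retraction of $V \cdot e \hookrightarrow V^n$ is a functional $(f_1,\dots,f_n)$ with $\sum f_i e_i = 1$, equivalently, $(e_1,\dots,e_n) = V$ as an ideal. Since finitely generated ideals of $V$ are principal, this ideal is generated by whichever $e_{i_0}$ has minimal valuation, and so equals $V$ exactly when some $e_i$ is a unit.

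Finally, I need some $e_i$ to be a unit. This is equivalent to $1 \notin \mathfrak{m}(S/T)$, i.e., $S/T \neq \mathfrak{m}(S/T)$, where $\mathfrak{m}$ is the maximal ideal of the local ring $V$. Because $S/T$ is a nonzero finitely generated $V$-module, this is immediate from Nakayama's lemma. I do not anticipate any real obstacle; the entire argument is a clean assembly of Lemma \ref{lem:fin-gen-torsion free}, the principal-ideal property of valuation rings, and Nakayama. The final claim that valuation rings are splinters in the sense of Definition \ref{def:splinters} is then automatic via Lemma \ref{lem:splinter-new-def}, since the stronger condition (i) of that lemma has been established.
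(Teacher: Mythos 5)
Your proof is correct and takes essentially the same route as the paper: reduce to a torsion-free (hence free) finitely generated quotient of $S$ via Lemma \ref{lem:fin-gen-torsion free}, then use Nakayama to show $1$ extends to a free $V$-basis. The only variation is cosmetic: the paper quotients $S$ by a prime $\fp$ lying over $(0)$ in $V$ to get a torsion-free (even domain) quotient, whereas you quotient by the $V$-torsion ideal $T$; both yield the same reduction, and your observation that $\mathfrak{m}(S/T)$ is an ideal of $S/T$ (so $1\in\mathfrak{m}(S/T)$ would force $S/T=\mathfrak{m}(S/T)$) correctly supplies the Nakayama step.
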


\begin{proof}
Let $V$ be a valuation ring and $V \rightarrow S$ be a finite map that induces a surjection 
on $\Spec$. Choose a prime ideal $\fp$ of $S$ that lies over the zero ideal of $V$. Then 
the composition $V \rightarrow S \rightarrow S/\fp$ is also finite and injective, thus 
surjective on $\Spec$. Since $V \rightarrow S$ splits if the composition 
$V \rightarrow S \rightarrow S/\fp$ does, it suffices to assume $S$ is a
    domain. In this case, $S$ 
is a torsion free $V$-module since $V \rightarrow S$ is an injection.
    Therefore, $S$ is a free
$V$-module by Lemma \ref{lem:fin-gen-torsion free}. Nakayama's Lemma now shows that $1 \in S$ 
must be part of a free $V$-basis of $S$, and so, $V \rightarrow S$ admits a splitting.
\end{proof}

We now provide another proof of Theorem \ref{thm:valuations-derived-splinters} using 
Proposition \ref{prop:valuations-splinters}.

\begin{proof}[Second proof of Theorem \ref{thm:valuations-derived-splinters}]
Again, as in Remark \ref{rem:flat-fp}, we may assume that $f: X \rightarrow \Spec(V)$ is 
finitely presented. Then by Corollary \ref{cor:valuation-rings-universally-cohesive}, $\cO_X$
is coherent and $\bR f_*\cO_X$ is a perfect complex. Consequently, by Corollary 
\ref{cor:fp-modules-pd-1},
\[
\bR f_*\cO_X \simeq \bigoplus_i R^if_*\cO_X[-i].
\]
Since $f: X \rightarrow \Spec(V)$ is surjective on $\Spec$, the ring homomorphism 
$V \rightarrow \Gamma(X,\cO_X)$ is injective, and moreover, 
$\Gamma(X,\cO_X)$ is a coherent $V$-module.
Thus, $V \rightarrow \Gamma(X,\cO_X)$ splits by Proposition \ref{prop:valuations-splinters}, inducing
a splitting of $\cO_{\Spec(V)} \rightarrow \bigoplus_i R^if_*\cO_X[-i]$.
\end{proof}

Defining splinters via the equivalent properties (ii) $\Leftrightarrow$ (iii) 
of Lemma \ref{lem:splinter-new-def} instead of (the stronger) property (i) 
also implies that the splinter condition behaves well with respect to descent 
and filtered colimits, as we now show.

\begin{proposition}
\label{prop:decent-colimit-splinter}
Let $R$ be any commutative ring.
\begin{enumerate}
    \item[{\em (i)}] If $R \rightarrow S$ is a pure ring map and $S$ is a splinter,
	      then $R$ is a splinter.
      \item[{\em (ii)}] Let $(I, \leq)$ be a directed set. If $R$ is the colimit 
      of a collection of rings $\{R_i\}_{i \in I}$ where each $R_i$ is a splinter, 
      then $R$ is a splinter.
      \item[{\em (iii)}] Let $R$ be a domain and $R^+$ denote its absolute integral
	closure. Then $R$ is a splinter if and only if the map $R \rightarrow R^+$
	is pure.
\end{enumerate}
\end{proposition}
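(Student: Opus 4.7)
The plan is to combine the equivalent characterizations of splinter from Lemma \ref{lem:splinter-new-def} with three standard properties of pure ring maps: (a) the composition of two pure maps is pure; (b) if $R\to A\to B$ has pure composition, then $R\to A$ is pure, since for any $R$-module $M$ the injection $M\to M\otimes_R B$ factors through $M\to M\otimes_R A$; and (c) purity is preserved under base change along any ring map, and a filtered colimit of pure maps sharing a common source $R$ is pure (the latter because an element killed in the colimit is killed at some stage, where it is nonzero by purity at that stage).

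For part (i), I verify condition (ii) of Lemma \ref{lem:splinter-new-def} for $R$. Given a finite ring map $R\to T$ that is surjective on $\Spec$, base change along $R\to S$ to obtain $S\to T\otimes_R S$; both finiteness and surjectivity on $\Spec$ are preserved by base change, so this map is again finite and surjective on $\Spec$. Since $S$ is a splinter, $S\to T\otimes_R S$ is pure, and composing with the pure map $R\to S$ shows $R\to T\otimes_R S$ is pure. Factoring this composition as $R\to T\to T\otimes_R S$ exhibits $R\to T$ as the initial factor of a pure map, hence $R\to T$ is pure.

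For part (ii), I verify condition (iii). Given $R\to S$ surjective on $\Spec$ with $S$ finitely presented as an $R$-module, note that $S$ is then finite and finitely presented as an $R$-algebra by Stacks Tag 0564. Standard limit arguments (Stacks Tags 00R1, 07RR, and 081F) descend this data: there exist an index $i$ and a finite, finitely presented $R_i$-algebra $S_i$ with $R_i\to S_i$ surjective on $\Spec$ and $S\cong S_i\otimes_{R_i}R$. The splinter property of $R_i$ yields an $R_i$-linear section $\sigma_i:S_i\to R_i$, and tensoring $\sigma_i$ with $R$ over $R_i$ gives the desired $R$-linear section $S\to R$. The main technicality, and the step I expect to require the most care, is confirming that finiteness, finite presentation, and surjectivity on $\Spec$ can all be arranged simultaneously at a common finite stage of the inverse system.

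For part (iii), write $R^+$ as the directed union of its finite $R$-subalgebras $\{R'_\lambda\}$; each inclusion $R\hookrightarrow R'_\lambda$ is finite and injective, hence surjective on $\Spec$. If $R$ is a splinter, condition (ii) makes each $R\to R'_\lambda$ pure, so $R\to R^+=\colim_\lambda R'_\lambda$ is pure as a filtered colimit of pure maps with common source $R$. Conversely, assume $R\to R^+$ is pure and let $R\to S$ be any finite ring map surjective on $\Spec$. Choose a minimal prime $\fp\subset S$ lying over $(0)\subset R$; then $R\hookrightarrow S/\fp$ is a finite injective extension of domains, so $S/\fp$ embeds into an algebraic closure of $\Frac(R)$ and hence into $R^+$. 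The composition $R\to S\to S/\fp\hookrightarrow R^+$ coincides with the pure map $R\to R^+$, so $R\to S$ is pure as its initial factor, verifying condition (ii) for $R$.
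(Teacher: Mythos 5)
Your proof is correct, and parts (i) and (ii) follow essentially the same line as the paper's: for (i), base change along $R\to S$, use the splinter property of $S$ to get purity of $S\to T\otimes_R S$, and identify $R\to T$ as an initial factor of a pure composite; for (ii), use condition (iii) of Lemma~\ref{lem:splinter-new-def}, descend the module-finite, finitely presented, surjective-on-$\Spec$ data to a finite stage, split there, and tensor up. The forward direction of (iii) also matches: express $R^+$ as a filtered colimit of finite $R$-subalgebras, each of which gives a pure map by the splinter hypothesis.

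Where you genuinely differ from the paper is the converse direction of (iii). The paper first proves that $R^+$ itself is a splinter --- by observing that for any finite extension $R^+\hookrightarrow S$ and any prime $\fp$ of $S$ over $(0)$, absolute integral closedness forces $R^+\to S/\fp$ to be an isomorphism, yielding an $R^+$-algebra retraction --- and then descends the splinter property to $R$ via part (i). You instead argue directly: for any finite $R\to S$ surjective on $\Spec$, you choose $\fp\subset S$ over $(0)$, embed $S/\fp$ over $R$ into $R^+$, and read off purity of $R\to S$ as the initial factor of the pure map $R\to R^+$. Your route is shorter and sidesteps the statement that $R^+$ is a splinter; the paper's route is only marginally longer but records the splinter property of $R^+$ as a byproduct (a fact the authors use implicitly later, e.g.~in Remark~\ref{rem:not-indsmooth}). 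Both are correct; the choice is one of emphasis.
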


\begin{proof}
To establish the splinter property,
it suffices to verify property (ii) or property (iii) from 
Lemma \ref{lem:splinter-new-def}.
Let $R \rightarrow T$ be a finite map that is surjective on $\Spec$.
Then $S \rightarrow T \otimes_R S$ is also a finite map that is surjective on 
$\Spec$, and so, it is pure. Using the cocartesian diagram
$$\xymatrix{
        R\ar[r]^{}\ar[d]^{}&S\ar[d]^{}\\
        T\ar[r]^{}&T \otimes_R S
    }$$
purity of $R \rightarrow T$ now follows because the composition 
    $R \rightarrow T \rightarrow T \otimes_R S$ is pure. This proves (i).

    For (ii), we have to show that if $f: R \rightarrow T$ is a ring map that
is surjective on $\Spec$ and $T$ is a finitely presented $R$-module, then $f$ 
admits an $R$-linear section. Note that $T$ is finitely presented
as an $R$-algebra \cite[\href{https://stacks.math.columbia.edu/tag/0564}{Tag 0564}]{stacks-project},
and so, there exists $i \in I$ and a finitely presented $R_i$-
algebra $T_i$ such that
$$T = T_{i} \otimes_{R_{i}} R.$$ 
Since $T$ is also a finite $R$-algebra, by 
\cite[\href{https://stacks.math.columbia.edu/tag/01ZO}{Tag 01O}]{stacks-project}
there exists $j \geq i$ such that 
$$T_j \coloneqq T_{i} \otimes_{R_i} R_j$$
is a finite $R_j$-algebra, hence also finitely presented as an $R_j$-module (again by
 \cite[\href{https://stacks.math.columbia.edu/tag/0564}{Tag 0564}]{stacks-project}). 
Moreover, we may further assume by 
\cite[\href{https://stacks.math.columbia.edu/tag/07RR}{Tag 07RR}]{stacks-project}
that $R_j \rightarrow T_j$ induces a surjection on $\Spec$. Since $R_j$ is a splinter,
it follows that $R_j \rightarrow T_j$ splits. Tensoring this splitting by
$-\otimes_{R_j} R$ then gives a splitting of $R \rightarrow T$, completing the 
    proof of (ii).

Suppose $R$ is a splinter. We can express $R^+$ as a filtered colimit 
of finitely generated $R$-subalgebras $R_i$. But $R \rightarrow R_i$ is a finite extension 
because $R \rightarrow R^+$ is integral, and hence pure because $R$ is a splinter. Since 
a filtered colimit of pure maps is pure, we see that $R \rightarrow R^+$ is pure. Conversely, suppose 
$R \rightarrow R^+$ is pure. Since $R^+$ is absolutely integrally closed, any finite extension 
$R^+ \hookrightarrow S$ admits a section that is, in fact, a ring homomorphism. 
To see this, choose a prime ideal $\fp$ of $S$ that contracts to the zero ideal of $R^+$. 
Then the composition $R^+ \rightarrow S \rightarrow S/\fp$ is again a finite ring 
extension. But now, since $S/\fp$ is a domain and $R^+$ is absolutely integrally 
closed, it follows that $R^+ \rightarrow S/\fp$ must be an isomorphism. Composing
$S \twoheadrightarrow S/\fp$ with the inverse of this isomorphism then gives
a left inverse of $R \rightarrow S$ in the category of $R$-algebras. In particular, $R^+$
is a splinter, allowing us to conclude that $R$ is a splinter by (i).
\end{proof}

\begin{remark}
{\*}
\begin{enumerate}
    \item[(1)] As an application of Proposition \ref{prop:decent-colimit-splinter}, if $R$ is a
splinter and a ring of prime characteristic $p > 0$, then its perfection $R_{\perf}$
is also a splinter. The converse holds if the Frobenius map of $R$ is pure. 

    \item[(2)] The proof of Proposition \ref{prop:decent-colimit-splinter}(ii) partially 
	globalizes -- if $S = \lim_I S_i$ is a limit of a directed inverse system of
	qcqs schemes with affine transition maps such that each $S_i$ is a splinter, then 
	$S$ is also a splinter. First, note that all the results on
	descending properties of morphisms under direct limits that are used in the proof 
	of loc. cit. continue to hold under the 
	above hypotheses on the direct system $\{S_i\}$. Second, if we have a Cartesian 
	diagram
	$$\xymatrix{
        X\ar[r]^{g'}\ar[d]^{f}&X_i\ar[d]^{f_i}\\
        S\ar[r]^{g}&S_i
    }$$
    where $f$ and $f_i$ are affine, then for any $F \in \QCoh(X_i)$, the (un-derived)
    base change map
    $\eta_F: g^*(f_i)_*F \rightarrow f_*(g')^*F$ is an isomorphism. Using this fact it 
    follows that if $\cO_{S_i} \rightarrow (f_i)_*\cO_{X_i}$ splits then so does
    $\cO_S \rightarrow f_*\cO_X$. The argument in the proof of
    Proposition \ref{prop:decent-colimit-splinter}(ii) now readily globalizes.
    
\item[(3)] For an alternate, but morally similar, proof of the splinter condition for
    a valuation ring $V$, note that the map $V \rightarrow V^+$ is faithfully flat since
    $V^+$ is a torsion free $V$-module (Lemma \ref{lem:fin-gen-torsion free}). 
    Thus $V$ is a splinter by Proposition \ref{prop:decent-colimit-splinter}(iii).
    \end{enumerate}
\end{remark}

\subsection{A proof using the derived direct summand theorem}

Recall that Hochster's direct summand conjecture, now a theorem due to Andr\'e 
\cite{And18} and Hochster \cite{Hoc73}, 
asserts that a Noetherian regular ring is a splinter. Unsurprisingly, 
the derived direct summand theorem is the derived analogue of the direct summand 
theorem. The statement, with an indication of proof, appears below:

\begin{theorem}
\label{thm:derived-direct-summand}
A Noetherian regular ring of arbitrary characteristic is a derived splinter.
\end{theorem}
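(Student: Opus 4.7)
The plan is to reduce to a complete Noetherian regular local ring and then handle three characteristic cases separately. Because Noetherian rings are universally cohesive, Proposition~\ref{prop:descent-derived-splinter} gives faithfully flat descent of the derived splinter property, and Lemma~\ref{lem:d-splinter-local} reduces the question to local $R$; completing along the maximal ideal (which is faithfully flat for Noetherian local rings) then permits the assumption that $R$ is a complete Noetherian regular local ring. Corollary~\ref{cor:dominating-proper-by-flat} lets me further assume the proper surjection $f: X \rightarrow \Spec(R)$ is finitely presented. By Cohen's structure theorem there are now three cases: equicharacteristic zero, equicharacteristic $p > 0$, and mixed characteristic $(0, p)$.

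In the equicharacteristic zero case, I would invoke Hironaka's resolution of singularities together with Kov\'acs' characterization of rational singularities~\cite{Kov00}. Choose a resolution $g: X' \rightarrow X$ with $X'$ regular; then $f \circ g: X' \rightarrow \Spec(R)$ is a proper surjection from a regular scheme, and since $R$ has rational singularities (being regular), Kov\'acs' theorem yields a splitting of $\cO_{\Spec(R)} \rightarrow \bR(f \circ g)_* \cO_{X'}$. Composing with $\cO_X \rightarrow \bR g_* \cO_{X'}$ then produces the desired splitting of $\cO_{\Spec(R)} \rightarrow \bR f_* \cO_X$.

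In the equicharacteristic $p > 0$ case, I would combine the classical fact that regular Noetherian rings of prime characteristic are splinters (Hochster's direct summand theorem in positive characteristic, which follows quickly from flatness of Frobenius on regular rings) with Bhatt's theorem from~\cite{Bha12} asserting that for Noetherian schemes of prime characteristic the notions of splinter and derived splinter coincide. Together these immediately give the conclusion in this case.

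The mixed characteristic case is the principal obstacle and would require the full force of Bhatt's argument from~\cite{Bha18}, which in turn rests on Andr\'e's proof of the direct summand conjecture~\cite{And18}. The strategy is to construct a perfectoid big Cohen--Macaulay $R$-algebra $A_\infty$ through which one can run an almost mathematics argument: one first obtains an almost splitting of $R \rightarrow \bR f_* \cO_X$ after base change to $A_\infty$, then promotes this to a genuine splitting in $\Dscr_{\qc}(\Spec(R))$ using vanishing results for perfectoid cohomology and the tilting equivalence. The technical core, and the primary difficulty of the whole theorem, lies precisely in wielding this perfectoid and almost mathematical machinery to control mixed characteristic phenomena invisible in either equicharacteristic setting.
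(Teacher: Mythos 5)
Your overall plan — reduce to complete Noetherian regular local, invoke Cohen's structure theorem, and split into three characteristic cases — is essentially the same architecture as the paper's, and the prime characteristic and mixed characteristic cases are handled the same way (Bhatt's equivalence of splinters and derived splinters plus Hochster in characteristic $p$; Bhatt's theorem \cite[Thm. 6.1]{Bha18} in mixed characteristic). But there is a genuine gap in your equicharacteristic zero case.

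Kov\'acs' characterization in \cite{Kov00} — that a scheme is a derived splinter precisely when it has rational singularities — is proved for varieties over a field of characteristic zero, i.e., schemes of finite type over a field. Once you have reduced to $R$ complete regular local of equicharacteristic zero (e.g., a formal power series ring $k[[x_1,\dots,x_n]]$), $\Spec(R)$ is no longer a variety, and Kov\'acs' result does not directly apply. (It is also worth noting that, granting Kov\'acs applied, your detour through Hironaka's resolution would be superfluous: if $\Spec(R)$ were already known to be a derived splinter, the splitting would hold for the original $f$ without passing to a resolution $g$.) The paper uses Kov\'acs only for the essentially-of-finite-type-over-a-field case and then, to handle a complete regular local $\QQ$-algebra, invokes Ma's result \cite[Thm. 5.11]{Ma18} relating the derived splinter property to vanishing conditions on maps of Tor — that is the ingredient your argument is missing. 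A minor additional slip: you cite Corollary~\ref{cor:dominating-proper-by-flat} to reduce to the finitely presented case, but that corollary is specific to valuation rings; over a Noetherian base, proper morphisms are automatically of finite presentation, so no such reduction is needed.
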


\begin{proof}[Indication of proof]
A Noetherian regular ring of mixed characteristic is a derived splinter by 
\cite[Thm. 6.1]{Bha18}. Since the notions of splinters and derived splinters 
coincide for Noetherian schemes over a field of prime characteristic
by \cite[Thm. 1.4]{Bha12}, it follows that a regular $\FF_p$-algebra is a 
derived splinter by \cite[Thm. 2]{Hoc73}. Regular algebras essentially of finite
type over a field of characteristic $0$ are derived splinters by \cite{Kov00} (see
also \cite[Thm. 2.12]{Bha12}) because rational singularities are derived 
splinters. To show that an arbitrary regular $\mathbb{Q}$-algebra $R$ is a derived 
splinter, we may assume that $R$ is local by Lemma \ref{lem:d-splinter-local} and 
further that $R$ is complete by faithfully flat descent of the derived splinter 
property (Proposition \ref{prop:descent-derived-splinter}). That a complete regular 
local $\mathbb{Q}$-algebra is a derived splinter now follows from 
\cite[Thm. 5.11]{Ma18}, in which Ma relates the derived splinter property to a 
vanishing condition on maps of Tor. 
\end{proof}

\begin{proof}[Third proof of Theorem \ref{thm:valuations-derived-splinters}]
Let $V$ be a valuation ring and $f: X \rightarrow \Spec(V)$ be a proper surjective
morphism.Via Remark \ref{rem:flat-fp} we may further assume that $f$ is proper, surjective,
flat and finitely presented. 
Then, as in the proof of Theorem \ref{thm:valuations-derived-splinters} that uses
the valuative criterion, we may assume that $V$ is absolutely integrally closed.
 Write $V$ as a colimit of regular rings 
$\{R_i\}_{i \in I}$ by Corollary~\ref{cor:absoluteintegralclosed}, where $I$ is a directed set. 
Since $f$ is finitely presented, by \cite[\href{https://stacks.math.columbia.edu/tag/01ZM}{Tag 01ZM}]{stacks-project}
there exists some $i_0 \in I$ and a finitely presented morphism 
$f_{i_0}: X_{R_{i_0}} \rightarrow \Spec(R_{i_0})$ such that
$$X = X_{R_{i_0}} \times_{R_{i_0}} V.$$  
Then by \cite[\href{https://stacks.math.columbia.edu/tag/04AI}{Tags 04AI}, \href{https://stacks.math.columbia.edu/tag/07RR}{07RR}, \href{https://stacks.math.columbia.edu/tag/081F}{081F}]{stacks-project}, one can find $i_1 \geq i_0$ such that the base change of $f_{i_0}$ along
$\Spec(R_{i_1}) \rightarrow \Spec(R_{i_0})$ is flat, surjective and proper. 
The upshot is that there exists some $i \in I$ and a cartesian square
$$\xymatrix{
        X\ar[r]^{g'}\ar[d]^{f}&X_{R_i}\ar[d]^{f_{R_i}}\\
        \Spec(V)\ar[r]^{g}&\Spec(R_i)
    }$$
where the right vertical map $f_{R_i}$ is also proper, surjective, flat and finitely 
presented. Since $R_i$ is a derived splinter by Theorem \ref{thm:derived-direct-summand},
the map $\cO_{\Spec(R_i)} \rightarrow \bR (f_{R_i})_* \cO_{X_{R_i}}$ splits. Applying
$\bL g^*$ to this splitting then gives us a splitting of 
$\cO_{\Spec(V)} \rightarrow \bR f_* \cO_X$ using Theorem \ref{thm:derived-pushforward-fpqc-pullback}(b)
(which applies because of flatness of $f_{R_i}$).
\end{proof}

A similar proof technique leads to the following result:

\begin{theorem}
\label{thm:smooth-valuations}
Let $V \rightarrow A$ be a smooth map of commutative rings where $V$ is a valuation ring. 
\begin{enumerate}
    \item[{\em (i)}] The ring $A$ is a splinter.
    \item[{\em (ii)}] If $f: X \rightarrow \Spec(A)$ is a proper finitely presented surjective morphism
	that is flat, then $\cO_{\Spec(A)} \rightarrow 
	\bR f_*\cO_X$ splits in $\Dscr_{\qc}(\Spec(A))$.
\end{enumerate}
\end{theorem}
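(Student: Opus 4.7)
The plan is to adapt the third proof of Theorem~\ref{thm:valuations-derived-splinters} to this relative setting. The key observation is that after base changing to an absolutely integrally closed valuation overring $\overline{V}$ of $V$, the ring $\overline{V}$ becomes a filtered colimit of finitely generated regular Noetherian subalgebras by Corollary~\ref{cor:absoluteintegralclosed}, so $A$ correspondingly becomes a filtered colimit of smooth algebras over regular Noetherian rings---each of which is itself regular Noetherian. Since regular Noetherian rings are (derived) splinters by Theorem~\ref{thm:derived-direct-summand}, both (i) and (ii) will follow by propagating splittings along colimits and along faithfully flat descent.

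For (i), I would first choose an absolutely integrally closed valuation ring $\overline{V}$ dominating $V$. Such $\overline{V}$ is torsion-free over $V$ by construction, hence flat over $V$ by Lemma~\ref{lem:fin-gen-torsion free}, and faithfully flat since both rings are local. Setting $\overline{A} := A \otimes_V \overline{V}$, the faithfully flat map $A \hookrightarrow \overline{A}$ is pure, so by Proposition~\ref{prop:decent-colimit-splinter}(i) it suffices to prove $\overline{A}$ is a splinter. Writing $\overline{V} = \colim_i R_i$ with each $R_i$ a finitely generated regular subalgebra, standard limit arguments (\cite[\href{https://stacks.math.columbia.edu/tag/01ZM}{Tag 01ZM}]{stacks-project}) together with descent of smoothness produce some $i_0$ and a smooth $R_{i_0}$-algebra $B_{i_0}$ with $\overline{A} \cong B_{i_0} \otimes_{R_{i_0}} \overline{V}$. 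For every $i \geq i_0$, the base change $B_i := B_{i_0} \otimes_{R_{i_0}} R_i$ is smooth over the regular Noetherian ring $R_i$, hence regular Noetherian, hence a splinter by Theorem~\ref{thm:derived-direct-summand}. Since $\overline{A} = \colim_{i \geq i_0} B_i$, Proposition~\ref{prop:decent-colimit-splinter}(ii) gives that $\overline{A}$ is a splinter.

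For (ii), I use the fact that $A$ is universally cohesive since $A$ is finitely presented over the universally cohesive ring $V$ (Corollary~\ref{cor:valuation-rings-universally-cohesive}). Given a proper, flat, surjective, finitely presented morphism $f: X \rightarrow \Spec(A)$, base change along $V \rightarrow \overline{V}$ yields a morphism $f_{\overline{A}}: X_{\overline{A}} \rightarrow \Spec(\overline{A})$ with the same properties, and the argument of Proposition~\ref{prop:descent-derived-splinter} applied to this specific $f$ reduces the question to splitting $\cO_{\Spec(\overline{A})} \rightarrow \bR(f_{\overline{A}})_* \cO_{X_{\overline{A}}}$. Writing $\overline{A} = \colim_{i \geq i_0} B_i$ as in the proof of (i), a further limit argument using \cite[\href{https://stacks.math.columbia.edu/tag/01ZM}{Tag 01ZM}, \href{https://stacks.math.columbia.edu/tag/04AI}{04AI}, \href{https://stacks.math.columbia.edu/tag/07RR}{07RR}, \href{https://stacks.math.columbia.edu/tag/081F}{081F}]{stacks-project} descends $f_{\overline{A}}$ to a proper, flat, surjective, finitely presented morphism $f_i: X_i \rightarrow \Spec(B_i)$ for some $i \geq i_0$. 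Since $B_i$ is a regular Noetherian ring, it is a derived splinter by Theorem~\ref{thm:derived-direct-summand}, so $\cO_{\Spec(B_i)} \rightarrow \bR(f_i)_* \cO_{X_i}$ splits, and applying $\bL g^*$ along the flat map $g: \Spec(\overline{A}) \rightarrow \Spec(B_i)$ produces the desired splitting via flat base change (Theorem~\ref{thm:derived-pushforward-fpqc-pullback}(b), available since $f_i$ is flat).

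The main technical obstacle is the bookkeeping needed to thread the descent statements together in the non-Noetherian setting: one must ensure that smoothness, properness, flatness, surjectivity, and finite presentation all descend simultaneously to some common index under the relevant filtered colimit. A related subtlety is that Proposition~\ref{prop:descent-derived-splinter} is stated as a global descent result for the derived splinter property, but here only the splitting for the single morphism $f_{\overline{A}}$ is needed, so one must verify that the argument of that proposition genuinely proceeds morphism-by-morphism, which it does.
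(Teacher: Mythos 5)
Your proposal is correct and matches the paper's own proof essentially step for step: reduce to an absolutely integrally closed $V$ via pure (resp.\ faithfully flat) descent, write $V$ as a filtered colimit of regular Noetherian subalgebras using Corollary~\ref{cor:absoluteintegralclosed}, descend the smooth algebra and (for (ii)) the morphism $f$ to some regular $R_i$, invoke the (derived) direct summand theorem there, and propagate the splitting along the colimit. Your closing observation that the descent argument of Proposition~\ref{prop:descent-derived-splinter} is applied one morphism at a time is exactly how the paper uses it as well, so there is nothing to add.
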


\begin{proof}
Choose an absolutely integrally closed valuation ring $V'$ with a faithfully flat map $V\rightarrow V'$.
By pure descent of the splinter
property (Proposition \ref{prop:decent-colimit-splinter}(i)), it suffices 
to show that $V' \otimes_V A$ (which is a smooth $V'$ algebra) is a splinter.
Thus, we may assume $V$ is absolutely integrally closed. By Proposition 
\ref{prop:decent-colimit-splinter}(ii) and the direct summand theorem,
it suffices to show that $A$ is a directed colimit of regular rings. Write
$V = \colim_I R_i$ as a colimit of regular rings $R_i$  over a directed
set $I$ (using Corollary~\ref{cor:absoluteintegralclosed}). Since smooth
maps are finitely presented, choose a finitely presented algebra $A_i$ over
some $R_i$ such that $A = A_i \otimes_{R_i} V$. Then by 
\cite[\href{https://stacks.math.columbia.edu/tag/0C0C}{Tag 0C0C}]{stacks-project}
one can assume without loss of generality that $A_i$ is smooth over $R_i$.
Clearly $$A = \colim_{j \geq i} A_i \otimes_{R_i} R_j,$$ where
$A_i \otimes_{R_i} R_j$ is a smooth $R_j$-algebra for each $j \geq i$ because 
smoothness is preserved under base change 
\cite[\href{https://stacks.math.columbia.edu/tag/00T4}{Tag 00T4}]{stacks-project}. 
Now the fact that a smooth algebra over a regular ring is regular 
\cite[\href{https://stacks.math.columbia.edu/tag/07NF}{Tag 07NF}]{stacks-project} 
    completes the proof of (i).

Note that a smooth algebra $A$ over a valuation ring is universally cohesive.
Thus, by the proof of faithfully flat descent of the derived splinter property
for a universally cohesive base (Proposition \ref{prop:descent-derived-splinter}), 
it suffices to assume $V$ is absolutely 
integrally closed as in (i).
Now one proves (ii) by using the ind-regularity of $A$ from the proof of (i) 
and then repeating the argument 
in the third proof of Theorem \ref{thm:valuations-derived-splinters} above.
\end{proof}

\begin{remark}
{\*}
\begin{enumerate}
    \item[(1)] A smooth $V$-scheme need not be a splinter (hence also not a derived splinter). 
For instance, any smooth projective variety $X$ over an algebraically closed 
field of prime characteristic $p > 0$ with ample $\omega_X$ cannot be a splinter
\cite[Cor. 2.18]{SZ15}. This 
is because the splinter condition implies that $X$ is globally Frobenius
split (the absolute Frobenius is a finite surjective morphism), which by a standard 
Grothendieck duality argument then shows that $\omega_X^{\otimes (1-p)}$ has 
non-trivial sections.

\item[(2)] One can adapt the arguments of this subsection to show that if
a ring $A$ is a directed colimit of a collection of rings $\{A_i\}_{i \in I}$ where 
each $A_i$ is a derived splinter, then $A$ partially satisfies the condition of
being a derived splinter in the sense that the splitting condition holds for any proper, 
finitely presented, surjective morphism $X \rightarrow \Spec(A)$ that is also flat. If the
transition maps of the directed system $\{A_i\}_{i \in I}$ are in addition flat, 
then one can conclude that $A$ is indeed a derived splinter. For example, the perfection
of a regular ring of prime characteristic is a derived splinter.

This raises the following 
natural question: \emph{is a directed colimit of derived splinter rings also a derived
splinter?}

The issue here is being able to 
pick a model of a proper, finitely presented $A$-scheme $X$ over some $A_i$
such that the associated pullback square is Tor-independent. 
\end{enumerate}
\end{remark}

\small
\bibliographystyle{amsplain}
\bibliography{dsplint}

\vspace{4mm}

\noindent Benjamin Antieau\\
\texttt{benjamin.antieau@gmail.com}\\

\noindent Rankeya Datta\\
\texttt{rankeya@uic.edu}

\end{document}